\newtheorem{theorem}{Theorem}[section]
\newtheorem{proposition}[theorem]{Proposition}
\newtheorem{lemma}[theorem]{Lemma}
\newtheorem{remark}[theorem]{Remark}
\newcommand{\hlgy}[2]{\ensuremath{H_{#1}(#2)}}
\newcommand{\rhlgy}[2]{\ensuremath{\widetilde{H}_{#1}(#2)}}
\newcommand{\hlgyz}[2]{\ensuremath{H_{#1}(#2;\mathbb{Z}_{p})}}
\newcommand{\rhlgyz}[2]{\ensuremath{\widetilde{H}_{#1}(#2;\mathbb{Z}_{p})}}
\newcounter{bean}
\newenvironment{romanlist}{\begin{list}{\rm ({\roman{bean}})}
      {\usecounter{bean}\setlength{\rightmargin}{\leftmargin}}}
      {\end{list}}
\newcommand{\seqm}[3]{\ensuremath{#1\stackrel{#2}
 {\longrightarrow}#3}}
\newcommand{\seqmm}[5]{\ensuremath{#1\stackrel{#2}
 {\longrightarrow}#3\stackrel{#4}{\longrightarrow}#5}}
\newcommand{\seqmmm}[7]{\ensuremath{#1\stackrel{#2}
 {\longrightarrow}#3\stackrel{#4}{\longrightarrow}#5
  \stackrel{#6}{\longrightarrow}#7}}
\newcommand{\seqmmmm}[9]{\ensuremath{#1\stackrel{#2}
 {\longrightarrow}#3\stackrel{#4}{\longrightarrow}#5
  \stackrel{#6}{\longrightarrow}#7
  \stackrel{#8}{\longrightarrow}#9}}
\newcommand{\floor}[1]{\ensuremath{\left\lfloor #1 \right\rfloor}}
\newcommand{\bracket}[1]{\ensuremath{\left( #1 \right)}}
\newcommand{\abs}[1]{\ensuremath{\left|#1\right|}}
\newcommand{\cwedge}[3]{\displaystyle\bigvee^{#2}_{#1}#3}
\newcommand{\cprod}[3]{\displaystyle\prod^{#2}_{#1}#3}
\newcommand{\ctimes}[3]{\displaystyle\bigotimes^{#2}_{#1}#3}
\newcommand{\csum}[3]{\displaystyle\sum^{#2}_{#1}#3}
\newcommand{\csummulti}[4]{\displaystyle\sum^{#3}
_{\renewcommand{\arraystretch}{0.6}\begin{matrix}\scriptstyle #1 \cr \scriptstyle #2\end{matrix}\renewcommand{\arraystretch}{1.2}}#4}
\newcommand{\tcdots}[0]{\ensuremath{\otimes\cdots\otimes}}
\newcommand{\qqed}{\hfill\square}
\newcommand{\zmodp}{\ensuremath{\mathbb{Z}_{p}}}
\newcommand{\plocal}{\ensuremath{\mathbb{Z}_{(p)}}}
\begin{document}


\title{Modular representations and the homotopy of low rank $p$-local $CW$-complexes}

\author{Piotr Beben} 
\address{\scriptsize{Institute of Mathematics, Academy of Mathematics and Systems Science, 
Chinese Academy of Sciences,
Si Yuan Building,  
Beijing 100190, 
P. R. China}}  
\email{bebenp@unbc.ca}

\author{Jie Wu} 
\address{\scriptsize{Department of Mathematics, National University of Singapore,
Block S17 (SOC1),
10, Lower Kent Ridge Road,
Singapore 119076}}  
\email{matwuj@nus.edu.sg}


\keywords{loop space decompositions, finite CW-complexes, modular representations.}

\begin{abstract}
Fix an odd prime $p$ and let $X$ be the $p$-localization of a finite suspended $CW$-complex.
Given certain conditions on the reduced mod-$p$ homology $\widetilde H_*(X;\zmodp)$ of $X$,
we use a decomposition of $\Omega\Sigma X$ due to the second author and computations in modular representation theory 
to show there are arbitrarily large integers $i$ such that $\Omega\Sigma^i X$ is a homotopy retract of $\Omega\Sigma X$. 
This implies the stable homotopy groups of $\Sigma X$ are in a certain sense retracts of the unstable homotopy groups,
and by a result of Stanley, one can confirm the Moore conjecture for $\Sigma X$. 
Under additional assumptions on $\widetilde H_*(X;\zmodp)$, we generalize a result of Cohen and Neisendorfer 
to produce a homotopy decomposition of $\Omega\Sigma X$ that has infinitely many finite $H$-spaces as factors.
\end{abstract}


\maketitle

\section{Introduction}

Finding homotopy decompositions of
loop spaces is of sizeable and broad interest in homotopy theory. 
One of the most fundamental examples is Serre's odd $p$-primary decomposition $\Omega S^{2m}\simeq S^{2m-1}\times\Omega S^{4m-1}$~\cite{Serre}, 
implying the $p$-components of homotopy groups for even dimensional spheres are determined
by those of the odd dimensional spheres. Selick's odd $p$-primary decomposition~\cite{Selick} of the homotopy fiber for the $p$-power map 
\seqm{\Omega^2 S^{2p+1}}{p}{\Omega^2 S^{2p+1}} 
allowed him to find the $p$-exponent of the spehere $S^{3}$, and in combination with an odd $p$-primary decomposition of looped Moore spaces, 
one is lead to the computation for $p$-exponents of higher dimensional spheres (Cohen, Moore, Neisendorfer~\cite{CMN,CMN2}). 

One would like to move beyond these initial successes, and find $p$-primary homotopy decompositions of general $H$-spaces, 
or loop suspensions $\Omega\Sigma X$ for $X$ a $CW$-complex in particular. 
Ideally such a decomposition would be in terms of indecomposable spaces, and one says that the decomposition 
is the \emph{finest} possible. The decomposition problem has been tackled for particular spaces using prior knowledge of self-maps
(see~\cite{MNT2} for example) or universal properties of certain constructed $H$-spaces~\cite{T2}, using techniques related to the Ganea fibration~\cite{Selick7}, and when localized at sufficiently large primes $p$~\cite{MW,Anick1,Anick2,Stelzer1}. 
Most progress to date has involved attempts at \emph{geometrically realizing} some given
coalgebra decompostion of the tensor algebra $\hlgyz{*}{\Omega\Sigma X}\cong T(\rhlgyz{*}{X})$.
In this direction, Cohen and Neisendorfer~\cite{CN1}, as well as Cooke, Harper, and Zabrodsky~\cite{CHZ}, 
extended Serre's decomposition to where $X$ is a certain low rank $CW$-complex, 
and in turn, Selick, Theriault, and Wu~\cite{WS1,WS2,WST} further generalized this to when any simply connected $co$-$H$-space 
is taken in place of $\Sigma X$. 
However, these decompositions were the finest possible only in a functorial sense, and unlike those of Cohen and Neisendorfer,
they were not explicit constructions. 
The computation of the mod-$p$ homology of the factors in the decomposition of Selick, Theriault, and Wu is dependent on open problems in modular representation theory, and as such these decompositions remain rather mysterious.     

Fortunately, there do exist fairly general decompositions that are also explicit. 
In~\cite{Wu2} the second author constructed a homotopy decomposition of $\Omega\Sigma X$ whenever $X$ is a suspension by analysing composites of James-Hopf maps and Samelson products. This turned out to be a partial geometric realization of the Poincar\'e-Birkhoff-Witt coalgebra decomposition
\[ 
T(\rhlgyz{*}{X})\cong\ctimes{i=1}{\infty}{S(L_i(\rhlgyz{*}{X}))},
\]
where $L_i(V)$ is the is the $\zmodp$-submodule of length $i$ Lie brackets in $V$, and $S(V)$ is the free commutative algebra generated by $V$.
Our main interest in this paper is to give more information concerning the factors in this decomposition, which is stated in Theorem~(\ref{Jie}). 

Fix $p$ to be an odd prime throughout. We will be working with $CW$-complexes that have been localized at $p$. A cell structure on a $p$-local space is taken to be in the $p$-local sense. As we will mostly be using reduced mod-$p$ homology, it will be convenient to simply denote it by \rhlgy{*}{X} for any space $X$, without indicating coefficients. Our main result is as follows:

\begin{theorem}
\label{T0}
Let $X$ be the $p$-localization of a suspended $CW$-complex. Set $V=\rhlgy{*}{X}$,
let $M$ denote the sum of the degrees of the generators of $V$, and define the
sequence of integers $b_{i}$ recursively, with $b_{0}=0$ and 
\[b_{i}=(1+\dim V)b_{i-1}+M.\]

Suppose either $V_{odd}=0$ or $V_{even}=0$, and $1<\dim V\leq p$. 
\begin{romanlist}
\item If $\dim V< p-1$, then $\Omega\Sigma^{b_{i}+1}X$ is a homotopy retract of 
$\Omega\Sigma X$ for each $i\geq 1$;
\item if $\dim V=p$, there exist spaces $Y_{i}$ such that $\Omega\Sigma Y_{i}$ is a homotopy retract of 
$\Omega\Sigma X$, and $\rhlgy{*}{Y_{i}}\cong\rhlgy{*}{\Sigma^{b_{i}}X}$ for $i\geq 1$. 
\end{romanlist}
\end{theorem}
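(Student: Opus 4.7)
The plan is to apply Wu's decomposition of $\Omega\Sigma X$ (Theorem~\ref{Jie}), whose factors are indexed by the PBW pieces $L_i(V)$, and to identify within the factor built from length-$(n+1)$ Lie brackets a wedge summand equivalent to (or at least having the same mod-$p$ homology as) $\Sigma^{b_1}X$, using modular representation theory. Set $n=\dim V$. A natural candidate for this summand sits inside $L_{n+1}(V)$: picking a nonzero element $w\in L_n(V)$ of multidegree $(1,\dots,1)$ --- a length-$n$ bracket using each generator of $V$ exactly once, so $|w|=M=b_1$ --- the elements $[v_j,w]\in L_{n+1}(V)$ for $j=1,\dots,n$ lie in distinct multidegrees and together span a graded subspace isomorphic to $V$ shifted by $M$.

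To turn this into a geometric retraction I would split the subspace off from $L_{n+1}(V)$ by a $\mathfrak{S}_{n+1}$-equivariant projection, and then lift this algebraic splitting to a wedge summand of the corresponding factor of Wu's decomposition; by Wu's theorem this would give $\Omega\Sigma^{M+1}X = \Omega\Sigma^{b_1+1}X$ as a retract of $\Omega\Sigma X$. The hypothesis $\dim V<p-1$ ensures $(n+1)!$ is invertible modulo $p$, so that $\zmodp[\mathfrak{S}_{n+1}]$ is semisimple and Maschke averaging supplies the required equivariant projection. When $\dim V=p$, Maschke fails, and although one can still extract a subspace of $L_{p+1}(V)$ of the correct dimensions in the correct degrees, the associated wedge of spheres $Y_1$ need not be equivalent to $\Sigma^M X$ --- this is the weaker conclusion of case (ii).

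Both indexed statements then follow by iteration: apply the base construction with $X$ replaced by $\Sigma^{b_{i-1}}X$ in case (i), or $Y_{i-1}$ in case (ii). The new space has $n$ generators whose total degree is $M+nb_{i-1}$, so the same procedure yields $\Omega\Sigma^{(M+nb_{i-1})+b_{i-1}+1}X = \Omega\Sigma^{b_i+1}X$ as a retract of $\Omega\Sigma^{b_{i-1}+1}X$, and composing retractions closes the induction. The recursion $b_i=(n+1)b_{i-1}+M$ is precisely this degree bookkeeping. The main obstacle is the modular representation-theoretic step and its geometric lifting --- showing that the equivariant algebraic splitting of $L_{n+1}(V)$ is realized by a retraction of the relevant factor onto a wedge of spheres --- and, in case (ii), constructing the spaces $Y_i$ despite the failure of semisimplicity of $\zmodp[\mathfrak{S}_{p+1}]$.
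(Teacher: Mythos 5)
Your skeleton is the paper's: invoke Wu's decomposition (Theorem~\ref{Jie}) to make $\Omega\Sigma L_{\ell+1}(X)$ (with $\ell=\dim V$) a retract of $\Omega\Sigma X$, split a copy of $\Sigma^{M}X$ off $L_{\ell+1}(X)$, and iterate; your degree bookkeeping $b_i=(\ell+1)b_{i-1}+M$ and the composition of retractions are exactly right. The genuine gap is the splitting step itself. The telescope machinery that converts algebra into geometry only accepts (near-)idempotents realized by permutation self-maps of the smash power $X^{(\ell+1)}$, i.e.\ explicit elements of $\zmodp[S_{\ell+1}]$ acting on $V^{\otimes(\ell+1)}$ by permuting tensor factors. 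A Maschke-type $S_{\ell+1}$-equivariant projection is of no direct use here: it is an element of $\mathrm{End}_{\zmodp[S_{\ell+1}]}(V^{\otimes(\ell+1)})$, not of the group ring, and in any case your candidate subspace $\mathrm{span}\{[v_j,w]\}$ is defined through a chosen basis of $V$ and is not an $S_{\ell+1}$-submodule of $V^{\otimes(\ell+1)}$ (the symmetric group permutes positions, not basis vectors), so there is no submodule to project onto. What the paper actually does is write down the explicit element $e=\beta_{\ell+1}\circ(\hat s_\ell\otimes 1)$ (or $\bar s_\ell$ in the odd case), prove $e\circ e=\pm c_{1,\ell}\,e$ with $c_{1,\ell}=(\ell+1)((\ell-1)!)$, and only then run the telescope argument; that computation (Sections~\ref{S1}--\ref{S2}, Proposition~\ref{p1}) is the technical heart of the theorem and is entirely absent from your proposal.

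This is not merely a presentational omission, because your arithmetic criterion comes out wrong. The condition that makes the splitting work is that $(\ell+1)((\ell-1)!)$ be a unit mod $p$, not that $(\ell+1)!$ be one. The two agree for $\ell<p-1$ and both fail at $\ell=p-1$, but at $\ell=p$ they diverge: $(p+1)!$ is divisible by $p$ (so your semisimplicity argument collapses and you are left asserting that "one can still extract a subspace" with no mechanism), whereas $c_{1,p}=(p+1)((p-1)!)$ is prime to $p$, which is precisely why case~(ii) holds. The only thing that genuinely fails at $\ell=p$ is the idempotence of $\tfrac{1}{\ell!}\hat s_\ell$ on $X^{(\ell)}$, which is what the paper uses to identify the retract $T(g)$ with $\Sigma^{M}X$ itself rather than merely with a space $Y_1$ having the homology of $\Sigma^{M}X$ (and $Y_1$ need not be a wedge of spheres, since $X$ is a general suspended complex). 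To repair the proposal you would need to supply the explicit near-idempotent and the scalar computation, and to justify (as the paper implicitly does) that the telescopes produced at each stage are again suspensions so that the iteration may continue.
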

\begin{remark}
Notice the case $\dim V=p-1$ is missing in Theorem~(\ref{T0}). Proposition~(\ref{p3}) 
in Section~(\ref{S4}) does not hold when $\dim V=p-1$, while Theorem~(\ref{Jie}) 
would not be applicable even if it did hold.  
\end{remark}

Theorem~(\ref{T0}) will depend on some computational work dealing with 
representations of symmetric groups in Section~(\ref{S1}), 
as well as Wu's decomposition (Theorem~(\ref{Jie})). 
A consequence is that the stable homotopy groups of the 
spaces defined are, in a precise sense, retracts of their regular homotopy groups. This
property leads us to a small application towards the Moore conjecture at the end of this paper.

For the case $\dim V=2$ we prove a stronger version of Theorem~\ref{T0}. We show
the retracts in Theorem~(\ref{T0}) are indeed factors in a decomposition of $\Omega\Sigma X$, 
and that the connectivity of these factors grows at a slower rate. 

\begin{theorem} 
\label{T0a}

Fix $p\geq 5$. Let $X$ be any any suspended $p$-local $CW$-complex with $\dim V=2$, 
and either $V_{odd}=0$ or $V_{even}=0$. 
Let $M$ denoting the sum of the degrees of the two generators of $V$. 

Suppose $0<k_{1}<k_{2}<\cdots$ is any sequence satisfying the following properties:
\begin{enumerate}
\item $2k_{i}+1$ is prime to $p$;
\item $2k_{i}+1$ is not a multiple of $2k_{j}+1$ whenever $i>j$.
\end{enumerate}
Then there exists a decomposition
$$\Omega\Sigma X\simeq \cprod{j}{}{(\Omega\Sigma^{k_{j}M+1}X)}\times(\mbox{Some other space}).$$
\end{theorem}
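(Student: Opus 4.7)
The plan is to combine Wu's decomposition (Theorem~\ref{Jie}) with the modular representation theory developed in Section~\ref{S1}. Let $x,y$ denote the two generators of $V$, of degrees $d_1,d_2$ respectively, where $d_1+d_2=M$ and $d_1,d_2$ have the same parity. The goal for each $k=k_j$ in the given sequence is to locate, inside the layer $L_{2k+1}(V)$ of the free Lie algebra on $V$, a 2-dimensional subspace $W_k$ concentrated in degrees $kM+d_1$ and $kM+d_2$ (so that $W_k\cong \Sigma^{kM}V$ as graded vector spaces), and then to realize the abstract summand $W_k\subseteq L_{2k+1}(V)$ geometrically as the retract $\Omega\Sigma^{kM+1}X$ of $\Omega\Sigma X$.

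First I would apply Theorem~\ref{Jie} to present $\Omega\Sigma X$ as a product whose factors realize the Poincar\'e--Birkhoff--Witt factors $S(L_i(V))$. Next, for each $k_j$, I would invoke the representation theory of Section~\ref{S1} to produce a $\zmodp[S_{2k_j+1}]$-equivariant idempotent that splits off $W_{k_j}$ from $L_{2k_j+1}(V)$; hypothesis~(1), $\gcd(2k_j+1,p)=1$, is what allows this idempotent to exist, as certain integer denominators must be invertible in $\zmodp$. The inclusion $W_{k_j}\hookrightarrow L_{2k_j+1}(V)$ then lifts to an explicit map $\Sigma^{k_jM}X\to \Omega\Sigma X$ built from suitable linear combinations of iterated Samelson products, and its James adjoint gives a candidate section $\Omega\Sigma^{k_jM+1}X\to\Omega\Sigma X$; the retraction in the other direction comes from iterated James--Hopf maps composed with the idempotent.

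To assemble these into the single product decomposition stated, I would inductively peel off $\Omega\Sigma^{k_jM+1}X$ from the current complement. Hypothesis~(2) --- that $2k_i+1$ is not a multiple of $2k_j+1$ for $i>j$ --- enters here in an essential way: the sub-Lie-algebra (hence sub-tensor-algebra) generated by $W_{k_j}$ is concentrated in bracket-lengths that are multiples of $2k_j+1$, so under this hypothesis $W_{k_i}$ cannot be swallowed by the sub-algebra generated by $W_{k_j}$, and the successive factors remain genuinely independent. The main obstacle will be the second step: showing that the purely algebraic $\zmodp[S_{2k_j+1}]$-splitting can be upgraded to a homotopy splitting compatible simultaneously across all $j$. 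The rigidity specific to $\dim V=2$, together with the tight interplay between Samelson products and James--Hopf maps in low rank, is what makes this geometric lifting possible.
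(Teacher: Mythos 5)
Your overall strategy---Wu's decomposition (Theorem~(\ref{Jie})) combined with a symmetric-group idempotent that splits a copy of $\Sigma^{k_jM}V$ off $L_{2k_j+1}(V)$ and is then realized geometrically---is the same as the paper's. But there is a concrete gap at the heart of your second step. You attribute the existence of the splitting idempotent to hypothesis~(1), $\gcd(2k_j+1,p)=1$. That condition only makes $\frac{1}{2k_j+1}\beta_{2k_j+1}$ an idempotent, which splits off all of $L_{2k_j+1}(X)$ from $X^{(2k_j+1)}$; it does not produce the further $2$-dimensional summand $W_{k_j}\cong\Sigma^{k_jM}V$ inside $L_{2k_j+1}(V)$. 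The operator the paper actually uses is $g_*=\beta_{2k_j+1}\circ(\hat{s}_{2}^{\otimes k_j}\otimes 1)$ (or its $\bar{s}_2$ analogue), and the entire content of Section~(\ref{S3}) (Proposition~(\ref{pii}), giving Theorem~(\ref{T2})(ii)) is the eigenvalue computation
\[
(\hat{s}_{2}^{\otimes k}\otimes 1)\circ\beta_{2k+1}\circ(\hat{s}_{2}^{\otimes k}\otimes 1)=\pm 3^{k}\,(\hat{s}_{2}^{\otimes k}\otimes 1),
\]
i.e.\ $c_{k,2}=d_{k,2}=3^{k}$. One needs this constant to be a unit in $\zmodp$ to rescale $g_*$ to an idempotent, and this is the \emph{only} place the hypothesis $p\geq 5$ enters: at $p=3$ the construction collapses even though $2k_j+1$ can perfectly well be prime to $3$. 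Your proposal never identifies this constant or explains where $p\geq 5$ is used, so as written the key splitting is unjustified. Relatedly, the step you flag as "the main obstacle" (upgrading the algebraic idempotent to a homotopy splitting) is exactly Proposition~(\ref{p3}): since $X$ is a suspension, every element of $\zmodp[S_k]$ is realized by a self-map of $X^{(k)}$, and mapping telescopes split off the image of any idempotent; with $\ell=2\leq p-1$ one further identifies the telescope with $\Sigma^{k_jM}X$ itself.

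Your assembly step is also more complicated than necessary. Rather than applying Theorem~(\ref{Jie}) to "the PBW factors" and then inductively peeling off summands (which would require controlling compositions across different factors), one simply feeds the sequence $2k_1+1<2k_2+1<\cdots$ directly into Theorem~(\ref{Jie}); hypotheses~(1) and~(2) are verbatim the hypotheses of that theorem, and it returns $\Omega\Sigma X\simeq\prod_{j}\Omega\Sigma L_{2k_j+1}(X)\times(\mbox{other})$ as a genuine product. Since $\Sigma^{k_jM}X$ is a retract of $L_{2k_j+1}(X)$, each factor $\Omega\Sigma L_{2k_j+1}(X)$ splits off $\Omega\Sigma^{k_jM+1}X$, and the stated decomposition follows with no further independence argument. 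Your heuristic for hypothesis~(2) (bracket lengths in the subalgebra generated by $W_{k_j}$ being multiples of $2k_j+1$) is roughly the intuition behind Wu's theorem, but it is not something you need to re-prove here.
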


\begin{remark}
For the case where $X$ has only odd dimensional cells ($V_{even}=0$), 
the authors in~\cite{HaoYuShen} have subsequently strengthened our theorems above,
and given a more elegant proof.  
\end{remark}

Another application is to say a bit more about Cohen's and Neisendorfer's~\cite{CN1}
decompositions involving low rank $CW$-complexes.
Recall this is an extension of Serre's decomposition in precisely the following sense:

\begin{theorem}[Cohen, Neisendorfer]
\label{CN}
Let $X$ be $p$-localization of a suspended $CW$-complex. Set $V=\rhlgy{*}{X}$,
and assume $V_{even}=0$, and $1\leq\dim V<p-1$. 
Then there exists a functorial decomposition
\[\Omega\Sigma X\simeq A(X)\times \Omega Q(X)\]
such that $A(X)$ is a finite $H$-space with
\[\hlgy{*}{A(X)}\cong \Lambda(V)\]
as primitively generated algebras, and
\[\hlgy{*}{\Omega Q(X)}\cong S([L(V),L(V)])=\ctimes{i=2}{\infty}(S(L_{i}(V))),\]
where $L(V)$ is the free graded Lie algebra generated by $V$, and $[L(V),L(V)]$ 
is the sub Lie algebra of $L(V)$ generated by Lie brackets of length greater than one.$\qqed$
\end{theorem}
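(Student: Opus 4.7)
The plan is to realize the Poincar\'e--Birkhoff--Witt decomposition of $T(V)$ geometrically. On mod-$p$ homology, $\rhlgy{*}{\Omega\Sigma X}\cong T(V)\cong UL(V)$, the universal enveloping algebra of the free graded Lie algebra on $V$. Since $V$ is concentrated in odd degrees and $p$ is odd, the inclusion $V\hookrightarrow L(V)$ splits the short exact sequence of \zmodp-modules $0\to [L(V),L(V)]\to L(V)\to V\to 0$, and PBW yields a coalgebra isomorphism $UL(V)\cong\Lambda(V)\otimes U[L(V),L(V)]$. Note that $\Lambda(V)$ is finite-dimensional since $V$ is.

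To construct $A(X)$, I would proceed by induction on the number of cells of $X$. Choose maps $S^{n_i}\to X$ representing a basis $v_1,\ldots,v_k$ of $V$ (each $n_i$ odd), and compose with $X\to\Omega\Sigma X$ to get maps $f_i\colon S^{n_i}\to\Omega\Sigma X$. Build $A(X)$ as a finite CW-complex assembled cell-by-cell from these spheres, with higher cells attached by iterated Whitehead products chosen to force the mod-$p$ homology to be $\Lambda(V)$, and simultaneously build a compatible H-map $A(X)\to\Omega\Sigma X$ extending the $f_i$ by repeatedly using the loop multiplication of $\Omega\Sigma X$. The condition $\dim V<p-1$ is used to ensure that the relevant obstructions (to the H-space structure on $A(X)$ and to the multiplicativity of the map) lie in trivial groups.

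For the complementary factor, take $Q(X)$ to be a suspended CW-complex whose reduced homology after desuspension is isomorphic to $[L(V),L(V)]$, with cells indexed by a basis of iterated Lie brackets of length $\geq 2$ in the $v_i$. The iterated Samelson products of the $f_i$ in $\Omega\Sigma X$ then assemble to a map $\Omega Q(X)\to\Omega\Sigma X$ inducing on mod-$p$ homology the natural inclusion $U[L(V),L(V)]\hookrightarrow UL(V)$. Multiplying the two factors and composing with the loop multiplication yields $A(X)\times\Omega Q(X)\to\Omega\Sigma X$, which by PBW is a mod-$p$ homology isomorphism; since both sides are simply connected and of finite type over \plocal, this is a $p$-local homotopy equivalence. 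Functoriality follows from the fact that each step of the construction can be carried out naturally from the data $(X,V)$.

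The principal obstacle is the construction of $A(X)$ as a finite H-space together with an H-map into $\Omega\Sigma X$. Naively setting $A(X)=\prod_i S^{n_i}_{(p)}$ fails, because the Samelson products $[v_i,v_j]$ are nonzero in $\rhlgy{*}{\Omega\Sigma X}$ but vanish in the product of spheres, so the canonical map is not multiplicative. The cell structure on $A(X)$ must therefore be twisted by attaching maps that absorb these Samelson products without producing new homology. The bound $\dim V<p-1$ is exactly what prevents length-$p$ iterated brackets from introducing obstructions via the restricted Lie algebra $p$-th power operation in $\rhlgy{*}{\Omega\Sigma X}$; once $\dim V\geq p-1$, new mod-$p$ phenomena appear and the statement of Theorem~(\ref{CN}) would need modification.
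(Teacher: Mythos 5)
A preliminary remark: the paper does not prove Theorem~(\ref{CN}) at all --- it is quoted from~\cite{CN1} and used as a black box (note the $\qqed$ immediately after the statement), so there is no internal proof to compare yours against. Judged on its own, your sketch starts from the right algebraic input (the PBW isomorphism $T(V)\cong\Lambda(V)\otimes U([L(V),L(V)])$), but it contains one concrete error and one genuine gap.

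The error is in the construction of $Q(X)$. If $Q(X)$ is a suspended complex whose desuspended reduced homology is all of $[L(V),L(V)]$ as a graded vector space, then the Bott--Samelson theorem gives $\hlgy{*}{\Omega Q(X)}\cong T([L(V),L(V)])$, the tensor algebra on the whole of $[L(V),L(V)]$. This is strictly larger than $U([L(V),L(V)])\cong S([L(V),L(V)])$ once $\dim V\geq 2$: the sub Lie algebra $[L(V),L(V)]$ is itself a free Lie algebra on a proper subset $W\subset [L(V),L(V)]$ (graded Shirshov--Witt, or Lazard elimination), so $U([L(V),L(V)])\cong T(W)$, not $T([L(V),L(V)])$. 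For instance $[[x_1,x_2],[x_1,x_2]]$ is a basis element of $[L,L]$ but is decomposable in $U([L,L])$; indexing cells of $Q(X)$ by a vector-space basis of $[L,L]$ double-counts it. Your map $A(X)\times\Omega Q(X)\to\Omega\Sigma X$ is therefore only a surjection on mod-$p$ homology, never an isomorphism, and the claimed equivalence fails. The fix is to take $Q(X)$ with desuspended homology a \emph{free generating set} of $[L(V),L(V)]$; Cohen and Neisendorfer achieve this by producing $Q(X)$ as the cofibre of a Whitehead-product-type map built from $A(X)$ and $X$, and identifying the homotopy fibre of the H-retraction $\Omega\Sigma X\to A(X)$ with $\Omega Q(X)$.

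The gap is that the existence of $A(X)$ as a finite $H$-space together with a suitable ($H$-)map comparing it with $\Omega\Sigma X$ --- which is the entire content of the theorem --- is asserted rather than proved. Saying that higher cells are attached by iterated Whitehead products ``chosen to force the homology to be $\Lambda(V)$'' and that the obstructions vanish because $\dim V<p-1$ names the difficulty without resolving it: you do not identify the groups in which the obstructions live, why the bound $\dim V<p-1$ kills them, or why the inductive step preserves the multiplication and the compatibility of the map $A(X)\to\Omega\Sigma X$ (or $\Omega\Sigma X\to A(X)$) with it. Since everything else in the argument hinges on this construction, the proposal as written does not establish the theorem; it correctly identifies the strategy and the principal obstacle but supplies neither the construction of $A(X)$ nor a correct model for $Q(X)$.
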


From the perspective of obtaining finite $H$-spaces, 
we strengthen the above decomposition when $\dim V$ is even as follows:

\begin{theorem}
\label{TCN}
Let $X$ be the $p$-localization of a suspended $CW$-complex. Set $V=\rhlgy{*}{X}$,
and assume $V_{even}=0$, $1<\dim V<p-1$, and $\dim V$ is even. 

Let $M$ denote the sum of the degrees of the generators of $V$, and define the
sequence of integers $b_{i}$ recursively, with $b_{0}=0$ and $b_{i}=(1+\dim V)b_{i-1}+M$.
Then there exists a decomposition
\[\Omega\Sigma X\simeq \cprod{i=0}{\infty}A(\Sigma^{b_{i}} X)\times(\mbox{Some other space}),\]
where (as in Theorem~(\ref{CN})) $A(\Sigma^{b_{i}} X)$ is a finite $H$-space that is a homotopy retract of $\Omega\Sigma^{b_{i}+1} X$,
with
\[\hlgy{*}{A(\Sigma^{b_{i}}X)}\cong \Lambda(\Sigma^{b_{i}}V).\]
\end{theorem}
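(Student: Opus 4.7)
The plan is to apply Theorem~\ref{T0}(i) and Theorem~\ref{CN} to each shifted complex $\Sigma^{b_i}X$ in turn, and then weave the resulting retracts of $\Omega\Sigma X$ into a single infinite product decomposition by exploiting the rapid growth of $b_i$. I first check that the hypotheses survive the shifts. Since $V_{even}=0$ and $\dim V$ is even, $M$ is a sum of an even number of odd integers, hence even; in the recursion $b_i=(1+\dim V)b_{i-1}+M$ the coefficient $1+\dim V$ is odd while $M$ is even, so induction from $b_0=0$ forces every $b_i$ to be even. Thus $\Sigma^{b_i}V$ is still concentrated in odd degrees with the same dimension $\dim V$ satisfying $1<\dim V<p-1$, and both Theorem~\ref{T0}(i) and Theorem~\ref{CN} apply to $\Sigma^{b_i}X$. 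Theorem~\ref{T0}(i) displays $\Omega\Sigma^{b_i+1}X$ as a retract of $\Omega\Sigma X$ for each $i\geq 1$ (tautologically for $i=0$), and Theorem~\ref{CN} applied to $\Sigma^{b_i}X$ gives $\Omega\Sigma^{b_i+1}X\simeq A(\Sigma^{b_i}X)\times\Omega Q(\Sigma^{b_i}X)$ with $\hlgy{*}{A(\Sigma^{b_i}X)}\cong\Lambda(\Sigma^{b_i}V)$. Composing supplies, for each $i\geq 0$, an inclusion $\iota_i\colon A(\Sigma^{b_i}X)\to\Omega\Sigma X$ and a retraction $r_i\colon\Omega\Sigma X\to A(\Sigma^{b_i}X)$ with $r_i\iota_i\simeq\mathrm{id}$, which may be taken to be $H$-maps.

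The crux is a dimension/connectivity comparison forcing $r_j\iota_i$ to be essentially trivial whenever $i\neq j$. The space $A(\Sigma^{b_i}X)$ has bottom cell in dimension $b_i+\min\deg V$ and top cell in dimension $b_i\dim V+M$, and the recursion identity $b_i\dim V+M=b_{i+1}-b_i$ shows that for $j>i$ the dimension of $A(\Sigma^{b_i}X)$ is bounded by the connectivity of $A(\Sigma^{b_j}X)$, so $r_j\iota_i\simeq *$ by obstruction theory; while for $j<i$ the top cell of $A(\Sigma^{b_j}X)$ lies strictly below the bottom cell of $A(\Sigma^{b_i}X)$, so $(r_j\iota_i)_*=0$ on positive-degree homology.

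Finally I assemble. Multiplying the $\iota_i$ via the loop multiplication of $\Omega\Sigma X$ yields an $H$-map $\iota\colon\prod_{i=0}^{\infty}A(\Sigma^{b_i}X)\to\Omega\Sigma X$, well-defined because the connectivities of the factors grow without bound, while $r=(r_0,r_1,\ldots)$ is an $H$-map into the product. The $H$-map composite $r\iota$ is then, on mod-$p$ homology, determined by its restrictions to the factor inclusions; by the previous paragraph these are the identity on each matching factor and zero elsewhere, so $r\iota$ induces the identity on mod-$p$ homology, hence is a homotopy equivalence by Whitehead's theorem on nilpotent $p$-local $H$-spaces of finite type. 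Thus $\prod_i A(\Sigma^{b_i}X)$ is a homotopy retract of $\Omega\Sigma X$, and a standard splitting for retracts of $H$-spaces produces the complementary ``other space'' factor. The main obstacle is this last assembly step: converting the cell-level vanishings into an honest identity of $H$-maps requires both the sharp recursion identity $b_i\dim V+M=b_{i+1}-b_i$ and the $H$-multiplicativity of the Cohen--Neisendorfer retractions.
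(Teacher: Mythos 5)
Your reductions are fine: the parity check showing each $b_i$ is even (so $\Sigma^{b_i}V$ stays odd-degree and both Theorem~\ref{T0}(i) and Theorem~\ref{CN} apply to $\Sigma^{b_i}X$) matches the paper, and your dimension/connectivity bookkeeping via $\ell b_i+M=b_{i+1}-b_i$ is correct. The gap is in the assembly step. Knowing that $(r_j\iota_i)_*=0$ on reduced homology for $j\neq i$ and $r_i\iota_i\simeq\mathrm{id}$ determines $(r\iota)_*$ only on the ``slotwise'' classes $1\otimes\cdots\otimes a_i\otimes\cdots\otimes 1$; to propagate this to all of $\bigotimes_i\Lambda(\Sigma^{b_i}V)$ you need $r_*$ to be multiplicative for the Pontrjagin product, i.e.\ you need each $r_i$ to be an $H$-map. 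You assert the retractions ``may be taken to be $H$-maps,'' but this is exactly the point that needs proof and it is doubtful: the retraction $\Omega\Sigma X\to\Omega\Sigma^{b_i+1}X$ supplied by Theorem~\ref{T0} factors through the projections of Wu's decomposition (Theorem~\ref{Jie}), which are built from James--Hopf maps composed with smash-level idempotents, and James--Hopf maps are not $H$-maps. Without multiplicativity, a class $x\otimes y$ with both $x$ and $y$ of positive degree is sent by $(r\iota)_*$ to $r_*\bigl(\iota_{i*}(x)\cdot\iota_{j*}(y)\bigr)$, which your degree estimates do not control, so the conclusion $(r\iota)_*=\mathrm{id}$ does not follow.

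The paper circumvents this by never comparing two non-adjacent factors directly. It first applies Theorem~\ref{CN} once to get $\Omega\Sigma X\simeq A(X)\times\Omega Q(X)$, and then uses a specific homological fact from the construction in \cite{Wu2}: the section $\Omega\Sigma L_{\ell+1}(X)\to\Omega\Sigma X$ induces on homology the inclusion of $T(L_{\ell+1}(V))\cong\bigotimes_i S(L_i(L_{\ell+1}(V)))$ into the tensor factor $S([L(V),L(V)])$ of the Poincar\'e--Birkhoff--Witt decomposition $\Lambda(V)\otimes S([L(V),L(V)])$. This places the retract $\Omega\Sigma L_{\ell+1}(X)$, and hence $\Omega\Sigma^{M+1}X$, inside $\Omega Q(X)$, yielding $\Omega\Sigma X\simeq A(X)\times\Omega\Sigma^{M+1}X\times(\mbox{other})$ with no multiplicativity hypothesis; the infinite product is then obtained by iterating this two-factor splitting on $\Sigma^{M}X$, exactly as in the induction of Proposition~\ref{p4}. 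If you want to salvage your one-shot assembly, you would need either to verify the $H$-map property for the specific retractions used, or to replace the connectivity argument by the paper's tracking of homology images inside the PBW factors.
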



\section{Preliminary} 
\label{S1}
In this section, and in Section~(\ref{S2}), $R$ denotes either the field $\zmodp$, or 
the ring of $p$-local integers $\plocal$. 
The symmetric group on $k$ letters is denoted by $S_{k}$, and $R[S_{k}]$ is the group ring over $R$ generated by $S_{k}$. 
The sequence $(a_1,a_2,\cdots, a_k)$ is used to denote the permutation $\sigma\in S_k$ satisfying $\sigma(i)=a_i$.

Let $V$ be any graded $R$-module, and $V^{\otimes k}$ denote the $k$-fold tensor product. 
Consider the (right) action of $R[S_{k}]$ on $V^{\otimes k}$ that is defined by permuting factors in a graded sense. 
In this case, the action of a single element $\sigma\in R[S_{k}]$ on $V^{\otimes k}$ induces a self-map 
\[\seqm{V^{\otimes k}}{\sigma}{V^{\otimes k}},\] 
which we also denote by $\sigma$ for the sake of convenience. 
More generally, when $j_1+j_2+\cdots+j_l=k$, we have the natural pairing
\[
\seqm{R[S_{j_1}]\otimes R[S_{j_2}]\otimes\cdots\otimes R[S_{j_l}]}{}{R[S_{k}]},
\] 
which defines an action of $R[S_{j_1}]\otimes R[S_{j_2}]\otimes\cdots\otimes R[S_{j_l}]$ on $V^{\otimes k}$. 

We will label by $\hat{s}_{k}$ and $\bar{s}_{k}$ the elements in $R[S_{k}]$ defined as the sums
\[\hat{s}_{k} = \csum{\sigma\in S_{k}}{}{sgn(\sigma)\sigma},\]
\[\bar{s}_{k} = \csum{\sigma\in S_{k}}{}{\sigma}.\]
One sees that the action of $\hat{s}_{k}$ and $\bar{s}_{k}$ on $V^{\otimes k}$ sends
a tensor in $V^{\otimes k}$ to a linear combination of all permutations of that tensor. 
For any $x_{1}\tcdots x_{k}\in V^{\otimes k}$ and $\sigma\in S_{k}$, 
one can easily verify the following equalities: 
\begin{lemma}
\label{LUseful}
If each $x_{i}$ has even degree, then
\[
\hat{s}_{k}(x_{\sigma(1)}\tcdots x_{\sigma(k)})=sgn(\sigma)\hat{s}_{k}(x_{1}\tcdots x_{k}).
\] 
Therefore $\hat{s}_{k}(x_{\sigma(1)}\tcdots x_{\sigma(k)})=0$ whenever $x_{\sigma(i)}$ 
and is a multiple of $x_{\sigma(j)}$ for some $i\neq j$. 

Similarly, if each $x_{i}$ has odd degree, then
\[
\bar{s}_{k}(x_{\sigma(1)}\tcdots x_{\sigma(k)})=sgn(\sigma)\bar{s}_{k}(x_{1}\tcdots x_{k}),
\]
and $\bar{s}_{k}(x_{\sigma(1)}\tcdots x_{\sigma(k)})=0$ 
if $x_{\sigma(i)}$ is a multiple of $x_{\sigma(j)}$ for some $i\neq j$.~$\qqed$ 
\end{lemma}

Next, we define the \emph{Dynkin-Specht-Wever} elements $\beta_{k}\in\Sigma_{k}$ inductively, with
$\beta_{2}=1-(2,1)$ and
\[\beta_{k}=(1\otimes\beta_{k-1})(1-(2,3,...,k,1)).\] 
The action of $\beta_{k}$ on $V^{\otimes k}$ is given by sending a tensor $x_{1}\tcdots x_{k}\in V^{\otimes k}$
to the commutator 
\[
[x_1,[x_2,...,[x_{k-1},x_{k}]]\cdots]]\in V^{\otimes k}.
\] 
That is, 
\[
\beta_{2}(x\otimes y) = x\otimes y - (-1)^{\abs{x}\abs{y}}y\otimes x,
\] 
and 
\begin{align}
\beta_{k}(x_{k}\tcdots x_{1}) & = x_{k}\otimes\beta_{k-1}(x_{k-1}\tcdots x_{1})\label{EDSW}\\
&\quad-(-1)^{\abs{x_k}\abs{x_{k-1}\tcdots x_{1}}}\beta_{k-1}(x_{k-1}\tcdots x_{1})\otimes x_{k}\notag.
\end{align}

In the following theorem we define certain integers $c_{n,\ell}$ and $d_{n,\ell}$, 
which will be referred to throughout this paper.  
 
\begin{theorem}
\label{T1}
There exist an integer $c_{n,\ell}\geq 0$ such that for any graded free $R$-module $V$
with $\dim{V}=\ell>1$ and $V_{odd}=0$, and each $x\in V^{\otimes (n\ell+1)}$, we have
\[(\hat{s}_{\ell}^{\otimes n}\otimes 1)\circ\beta_{\ell n+1}\circ(\hat{s}_{\ell}^{\otimes n}\otimes 1)(x),
=\pm c_{n,\ell}(\hat{s}_{\ell}^{\otimes n}\otimes 1)(x).\]
On the other hand, if $V_{even}=0$, there exists an integer $d_{n,\ell}\geq 0$ such that
\[(\bar{s}_{\ell}^{\otimes n}\otimes 1)\circ\beta_{\ell n+1}\circ(\bar{s}_{\ell}^{\otimes n}\otimes 1)(x)
=\pm d_{n,\ell}(\bar{s}_{\ell}^{\otimes n}\otimes 1)(x).\]
These integers are independent of our choice of $V$. 

\end{theorem}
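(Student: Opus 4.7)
\emph{Proof plan.} I will treat the even case $V_{odd}=0$ in detail; the odd case is completely parallel, with $\bar s_\ell$ in place of $\hat s_\ell$, and with Koszul signs supplying the alternation that the signature of $\sigma$ previously supplied. Fix a basis $v_1,\ldots,v_\ell$ of $V$ and set $\omega:=\hat s_\ell(v_1\otimes\cdots\otimes v_\ell)$. The sign remarks recorded just before the theorem show that $\hat s_\ell$ vanishes on any basis tensor with a repeated factor, so its image on $V^{\otimes\ell}$ equals $R\omega$, free of rank one over $R$ — this is where the equality $\dim V=\ell$ is crucial. Consequently the image
\[
I:=\mathrm{im}\bigl(\hat s_{\ell}^{\otimes n}\otimes 1\bigr)\subseteq V^{\otimes(\ell n+1)}
\]
equals $R\omega^{\otimes n}\otimes V$, a rank-$\ell$ free $R$-submodule with basis $\{\omega^{\otimes n}\otimes v_k\}_{k=1}^\ell$; as a representation of $GL(V)$ acting diagonally on tensor powers, it is the standard representation of $GL(V)$ twisted by the $n$-th power of the determinant character.

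Writing $\phi:=\hat s_\ell^{\otimes n}\otimes 1$, the triple composite in the theorem ends with $\phi$, so its image lies in $I$; the asserted identity is then equivalent to the claim that the restriction
\[
\psi:=\phi\circ\beta_{\ell n+1}\big|_I\colon I\longrightarrow I
\]
is scalar multiplication by $\pm c_{n,\ell}$. My strategy is to prove this via $GL(V)$-equivariance and a Schur-type argument. Both $\hat s_\ell$ and $\beta_{\ell n+1}$ are actions of fixed elements of $R[S_{\ell n+1}]$, hence commute with the diagonal action of any $g\in GL(V)$, and so does $\psi$. Writing $\psi(\omega^{\otimes n}\otimes v)=\omega^{\otimes n}\otimes\tilde\psi(v)$ defines an $R$-linear map $\tilde\psi\colon V\to V$, and equivariance of $\psi$, after cancelling the common factor of $\det(g)^n$ (a unit since $g\in GL(V)$) from both sides, translates into $\tilde\psi(gv)=g\tilde\psi(v)$ for all $g\in GL(V)$. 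Since $p$ is odd, $R$ contains a unit $a$ with $a-1$ also a unit, so a standard diagonalisation argument against matrices of the form $\mathrm{diag}(a,1,\ldots,1)$ together with basis-permutation matrices will force $\tilde\psi$ to be multiplication by a single scalar $c\in R$; hence $\psi=c\cdot\mathrm{id}_I$.

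Finally, every operator involved is defined by a universal element of a symmetric group algebra and is natural in $V$, so the scalar $c$ depends only on $n$ and $\ell$: I would perform the computation once over $\mathbb Z$ (with $V=\mathbb Z^\ell$) to obtain an integer whose image in $R$ is the scalar appearing for every choice of $V$ and $R$; setting $c_{n,\ell}:=|c|$ and absorbing the sign into the $\pm$ supplies the statement. The identical argument with $\bar s_\ell$ in place of $\hat s_\ell$ produces $d_{n,\ell}$. The main subtlety will be the Schur step in the middle — once one has verified that every $GL(V)$-equivariant endomorphism of $I$ is a scalar, the remainder is essentially bookkeeping, together with a careful transfer of Koszul signs for the odd case.
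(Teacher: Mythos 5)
Your proposal is correct, and while its first step coincides with the paper's, the decisive step is carried out by a genuinely different mechanism. Both arguments begin by observing that, because $\dim V=\ell$ and the Koszul signs are uniform (this is exactly where the hypothesis $V_{odd}=0$ or $V_{even}=0$ enters), the image of $\hat{s}_\ell^{\otimes n}\otimes 1$ is the free rank-$\ell$ module spanned by $\omega^{\otimes n}\otimes v_k$. From there the paper proceeds combinatorially: it notes that each permutation occurring in $\gamma=\beta_{\ell n+1}$ sends $y^{\otimes n}\otimes v_1$ to a tensor on which $\hat s_\ell^{\otimes n}\otimes 1$ is either zero or $\pm\,\omega^{\otimes n}\otimes v_1$ (the multiplicity of $v_1$ forces the last slot), so the composite acts on this generator by a signed count $\pm c_\gamma$, and then a basis-swap $v_1\leftrightarrow v_j$ shows the same integer works for every generator; independence of $V$ comes from an explicitly equivariant ungraded isomorphism $\theta$. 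You instead invoke $GL(V)$-equivariance of all the operators (valid for the ungraded $GL(V)$ precisely because the signs are uniform) and a Schur-type centralizer computation over the local ring $R$, using $\mathrm{diag}(a,1,\dots,1)$ with $a$ and $a-1$ units together with permutation matrices; your basis-swap is subsumed in the permutation matrices, and the eigenvector property that the paper extracts by counting multiplicities comes for free. Your route is cleaner and more conceptual, and it shows more: any equivariant endomorphism of $I$ is scalar, not just this particular composite. The paper's route is more elementary and, importantly for the rest of the article, exhibits $c_{n,\ell}$ directly as a signed count of permutations, which is the form in which it is evaluated in Sections~3 and~4. One point to make explicit in a final write-up: Schur's lemma only identifies the scalar as an element of $R$, so your base-change-from-$\mathbb{Z}$ step (the operators all come from $\mathbb{Z}[S_{\ell n+1}]$ and the matrix of $\psi$ in the basis $\{\omega^{\otimes n}\otimes v_k\}$ has integer entries) is genuinely needed to produce the integer $c_{n,\ell}\geq 0$ of the statement, and it simultaneously gives independence of $V$.
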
 
\begin{proof}

Let $V$ be a free graded $R$-module such that $\ell=\dim{V}>1$ and $V_{odd}=0$, 
and let $g$ denote the self-map 
$(\hat{s}_{\ell}^{\otimes n}\otimes 1)\colon\seqm{V^{\otimes (n\ell+1)}}{}{V^{\otimes (n\ell+1)}}$.
Take a basis $\{\upsilon_{1},...,\upsilon_{\ell}\}$ of $V$, 
and let $y=\upsilon_{1}\tcdots \upsilon_{\ell}\in V^{\otimes\ell}$,
and $z_{i}=y^{\otimes n}\otimes\upsilon_{i}\in V^{\otimes (n\ell+1)}$.
Using Lemma~(\ref{LUseful}), observe $V_{odd}=0$ and $\ell=\dim{V}$ implies  
we can write $g(x)$ as a linear combination of the elements 
$g(z_{i})$ for each $i$ and every $x\in V^{\otimes (n\ell+1)}$, 
and in turn each $g(z_{i})$ is a linear combination of tensors
$\sigma_{1}(y)\tcdots\sigma_{n}(y)\otimes\upsilon_{i}$ ranging over all choices of
$\sigma_{1},...,\sigma_{n}\in S_{n}$.

Let $\gamma\in R[S_{n\ell+1}]$ be any element. Since the factor $\upsilon_{1}$ occurs in $y$, 
\begin{equation}
\label{E1}
g\circ\gamma(y^{\otimes n}\otimes \upsilon_{1})=\pm c_{\gamma}g(y^{\otimes n}\otimes \upsilon_{1})
\end{equation}
for some integer $c_{\gamma}\geq 0$. If we let $y_{j}\in V^{\otimes\ell}$ be the permutation of the 
tensor $y$ such that the first and $j^{th}$ factors are interchanged, it is clear that
$g\circ\gamma(y_{j}^{\otimes n}\otimes \upsilon_{j})=\pm c_{\gamma}g(y_{j}^{\otimes n}\otimes \upsilon_{j})$
for each $j$, as this is the same as replacing $\upsilon_{1}$ with $\upsilon_{j}$ and 
$\upsilon_{j}$ with $\upsilon_{1}$ in equation~(\ref{E1}),
and both $\upsilon_{1}$ and $\upsilon_{j}$ have even degree.
Then since $\hat{s}_{n}(y)=\pm \hat{s}_{n}(y_{j})$ and 
$\hat{s}_{n}(\sigma(y))=sgn(\sigma)\hat{s}_{n}(y)$ for any $\sigma\in S_{n}$, 
$g\circ\gamma(\sigma_{1}(y)\tcdots\sigma_{n}(y)\otimes \upsilon_{j})=
\pm c_{\gamma}g\circ\gamma(\sigma_{1}(y)\tcdots\sigma_{n}(y)\otimes \upsilon_{j})$
for each $j$ and $\sigma_{1},...,\sigma_{n}\in S_{n}$. Since $g(x)$ takes the form of a linear
combination as stated above, thus
\[g\circ\gamma\circ g(x)=\pm c_{\gamma}g(x)\] 
for any $x\in V^{\otimes (n\ell+1)}$. 

Next, let $W$ be any free grade $R$-module such that $\dim W=\dim V$ and $W_{odd}=0$. 
If $\{\omega_{1},...,\omega_{\ell}\}$ is a basis of $W$, there is an 
isomorphism \seqm{V}{\theta}{W} of ungraded $R$-modules defined by sending $\upsilon_{i}$ to $\omega_{i}$. 
Since both $W_{odd}=0$ and $V_{odd}=0$, the isomorphism
\seqm{V^{\otimes (n\ell+1)}}{\theta^{\otimes (n\ell+1)}}{W^{\otimes (n\ell+1)}} of ungraded $R$-modules is equivariant with 
respect to the (graded) action of $R[S_{n\ell+1}]$. Thus $c_{\gamma}$ is independent of $V$. We finish
by setting $\gamma=\beta_{n\ell+1}$, and $c_{n,\ell}=c_{\gamma}$. The proof for the $V_{even}=0$ case is similar.
\end{proof}

The values for several instances of the integers $c_{n,\ell}$ and $d_{n,\ell}$ are given in the following theorem.

\begin{theorem}\mbox{}
\label{T2}
The equality
\[c_{1,\ell}=d_{1,\ell}=(\ell+1)((\ell-1)!)\]
holds for $\ell>1$.
\end{theorem}

\begin{remark}
With some effort one can show $c_{n,2}=d_{n,2}=3^{n}$, though we will not deal with this case in this paper.
Computer culculations also suggest $c_{2,3}=64$, $d_{2,3}=32$; $c_{3,3}=512$, $d_{3,3}=64$; $c_{2,4}=420$, $d_{2,4}=900$. 
Observing the pattern one might conjecture that generally at least one of
$c_{n,\ell}$ or $d_{n,\ell}$ is equal to $(\ell+1)^{n}((\ell-1)!)^{n}$.
 
\end{remark}


\section{Calculating $c_{1,\ell}$ and $d_{1,\ell}$ for $\ell>1$}
\label{S2}
To show that $c_{1,\ell}=(\ell+1)((\ell-1)!)$, it will suffice to show 
\[
(\hat{s}_{\ell}\otimes 1)\circ\beta_{\ell+1}\circ(\hat{s}_{\ell}\otimes 1)(x)
=\pm (\ell+1)((\ell-1)!)(\hat{s}_{\ell}\otimes 1)(x)
\] 
for any particular choice of graded free $R$-module $V$ satisfying $\ell=\dim{V}>1$ and 
$V_{odd}=0$, and any particular choice of $x\in V$ satisfying $(\hat{s}_{\ell}\otimes 1)(x)\neq 0$. 
We actually work in a slightly more general context. We will show that this equation holds for 
any graded $R$-module $V$ as long as $x$ is of the form $x_{k-1}\tcdots x_{1}\otimes x_{i}$ 
for some $1\leq i\leq k-1$, and each $x_{i}$ is a homogeneous element in $V$ of even degree. 
This is done in Proposition~(\ref{p1}), together with an analogous calculation for $d_{1,\ell}$. 

The following elements in $R[S_{k}$] will be of use: 
\[\hat{t}_{j,k} = \csum{\sigma\in T_{j,k}}{}{sgn(\sigma)\sigma},\]
\[\bar{t}_{j,k} = \csum{\sigma\in T_{j,k}}{}{\sigma},\]
where $T_{j,k}\subseteq S_{k}$ consists of the $k$ elements switching the position of the $j^{th}$ letter
\[
T_{j,k}=\{(j,1,\cdots,j-1,j+1,\cdots,k),(1,j,\cdots,j-1,j+1,\cdots,k),\cdots,(1,\cdots,j-1,j+1,\cdots,k,j)\}.
\] 
It is easy to see these elements satisfy the equations
\begin{equation}
\label{EUseful1}
\hat{s}_{k} = \hat{t}_{1,k}\circ(1\otimes \hat{s}_{k-1}) = \hat{t}_{k,k}\circ(\hat{s}_{k-1}\otimes 1),
\end{equation}
and
\begin{equation}
\label{EUseful2}
\bar{s}_{k} = \bar{t}_{1,k}\circ(1\otimes \bar{s}_{k-1}) = \bar{t}_{k,k}\circ(\bar{s}_{k-1}\otimes 1).
\end{equation}
We begin with a few lemmas.

\begin{lemma}
\label{l1}
Let $k>2$, and $V$ be any graded $R$-module. Then for every $y\in V^{\otimes k}$ 
\[\hat{s}_{k}\circ\beta_{k}(y)=0,\] 
and
\[\bar{s}_{k}\circ\beta_{k}(y)=0.\]
\end{lemma}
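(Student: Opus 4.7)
My plan is to translate both operator equations into identities in the group ring $R[S_k]$, where they reduce to computing two augmentation homomorphisms applied to $\beta_k$. The crucial absorption properties are $\sigma \hat{s}_k = \hat{s}_k \sigma = sgn(\sigma)\hat{s}_k$ and $\sigma \bar{s}_k = \bar{s}_k \sigma = \bar{s}_k$ for every $\sigma \in S_k$, each proved in one line by relabelling summands in the defining sum. Extending by linearity, these yield $g\cdot\hat{s}_k = \epsilon_{sgn}(g)\hat{s}_k$ and $g\cdot\bar{s}_k = \epsilon(g)\bar{s}_k$ for any $g \in R[S_k]$, where $\epsilon_{sgn}$ and $\epsilon$ are the ring homomorphisms $R[S_k]\to R$ sending $\sigma$ to $sgn(\sigma)$ and to $1$, respectively. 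Because these are group-ring identities, the Koszul signs that enter the graded action of $R[S_k]$ on $V^{\otimes k}$ play no role, so the same conclusion passes to operator equations on an arbitrary graded $V$.

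The lemma thus reduces to showing $\epsilon_{sgn}(\beta_k) = 0$ and $\epsilon(\beta_k) = 0$ for $k > 2$. I will compute these inductively using the recursion $\beta_k = (1\otimes\beta_{k-1})(1 - \tau_k)$ from the preamble, where $\tau_k = (2,3,\ldots,k,1)$ is a $k$-cycle, together with the multiplicativity of $\epsilon_{sgn}$ and $\epsilon$. The $\bar{s}_k$ statement is essentially automatic: since $\epsilon(1 - \tau_k) = 1 - 1 = 0$, multiplicativity gives $\epsilon(\beta_k) = 0$ for every $k \geq 2$.

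For $\hat{s}_k$, note that $\epsilon_{sgn}(1 - \tau_k) = 1 - (-1)^{k-1}$, which vanishes precisely when $k$ is odd. This settles the base case $k = 3$ immediately, since $\tau_3$ is a $3$-cycle and hence even. For the inductive step with $k \geq 4$, observe that the standard embedding $S_{k-1} \hookrightarrow S_k$ sending $\sigma$ to $1\otimes\sigma$ preserves signs, so $\epsilon_{sgn}(1\otimes\beta_{k-1}) = \epsilon_{sgn}(\beta_{k-1})$, which equals $0$ by the inductive hypothesis; multiplicativity then forces $\epsilon_{sgn}(\beta_k) = 0$.

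There is no real obstacle in this argument; the only subtlety worth flagging in the write-up is that the entire computation takes place in $R[S_k]$ independent of the $V^{\otimes k}$-module structure, which is what allows the lemma to hold for \emph{any} graded $R$-module $V$ rather than only ones concentrated in a single parity of degrees.
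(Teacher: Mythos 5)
Your proof is correct, but it takes a genuinely different route from the paper. The paper argues by induction directly on homogeneous tensors: it uses the factorizations $\hat{s}_{k} = \hat{t}_{1,k}\circ(1\otimes \hat{s}_{k-1}) = \hat{t}_{k,k}\circ(\hat{s}_{k-1}\otimes 1)$ together with the two-term recursion for $\beta_k$ to reduce $\hat{s}_{k}\circ\beta_{k}$ to two copies of $\hat{s}_{k-1}\circ\beta_{k-1}$, each zero by induction, with the base case $k=3$ checked by inspection. You instead observe that the whole statement lives in $R[S_k]$: the absorption identities $\sigma\hat{s}_k=\hat{s}_k\sigma=sgn(\sigma)\hat{s}_k$ and $\sigma\bar{s}_k=\bar{s}_k\sigma=\bar{s}_k$ reduce the lemma to $\epsilon_{sgn}(\beta_k)=\epsilon(\beta_k)=0$, which follows from multiplicativity of the two augmentations and the recursion $\beta_k=(1\otimes\beta_{k-1})(1-(2,3,\dots,k,1))$. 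This is cleaner and more conceptual; in particular it replaces the paper's ``by inspection'' base case with the one-line computation $\epsilon_{sgn}(1-\tau_3)=1-sgn(\tau_3)=0$, and it makes transparent why the result is independent of $V$. The one point you dismiss a little quickly is the claim that the Koszul signs ``play no role'': what is actually needed is that the graded permutation action makes $V^{\otimes k}$ a genuine $R[S_k]$-module, so that composites of the induced operators correspond to products in the group ring. That is a standard fact (the Koszul sign is a cocycle), and the paper itself uses it implicitly when it reads identities such as $\hat{s}_k=\hat{t}_{1,k}\circ(1\otimes\hat{s}_{k-1})$ as operator identities, so this is a point to state rather than a gap. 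What each approach buys: yours generalizes instantly (any product in $R[S_k]$ containing a factor of vanishing sign-augmentation is killed by $\hat{s}_k$), while the paper's hands-on induction sets up the notation and the style of tensor-level manipulation that it reuses in Lemmas~\ref{l1a}--\ref{l3}, where the finer information (not just vanishing) is needed.
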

\begin{proof}

It is sufficient to show the lemma holds for homogeneous elements in $V^{\otimes k}$. 
We proceed by induction. By inspection we have $\hat{s}_{3}\circ\beta_{3}(x) = 0$ for any $x\in V^{\otimes 3}$. 
Let us assume $\hat{s}_{k-1}\circ\beta_{k-1}(x) = 0$ for every $x\in V^{\otimes k-1}$. 
Take any homogeneous element $x_{k}\tcdots x_{1}\in V^{\otimes k}$ and let $c=\abs{x_{k-1}\tcdots x_{1}}$. 
Using equations~(\ref{EDSW}) and~(\ref{EUseful1})
\begin{align*}
&\hat{s}_{k}\circ\beta_{k}(x_{k}\tcdots x_{1})\\
&= \hat{s}_{k}(x_{k}\otimes\beta_{k-1}(x_{k-1}\tcdots x_{1}))
-(-1)^{c\abs{x_{k}}}\hat{s}_{k}(\beta_{k-1}(x_{k-1}\tcdots x_{1})\otimes x_{k})\\
&= \hat t_{1,k}(1\otimes\hat{s}_{k-1})(x_{k}\otimes\beta_{k-1}(x_{k-1}\tcdots x_{1}))\\
&\quad-(-1)^{c\abs{x_{k}}}\hat t_{k,k}(\hat{s}_{k-1}\otimes 1)(\beta_{k-1}(x_{k-1}\tcdots x_{1})\otimes x_{k})\\ 
&= \hat t_{1,k}(x_{k}\otimes(\hat{s}_{k-1}\circ\beta_{k-1})(x_{k-1}\tcdots x_{1}))\\
&\quad-(-1)^{c\abs{x_{k}}}\hat t_{k,k}((\hat{s}_{k-1}\circ\beta_{k-1})(x_{k-1}\tcdots x_{1})\otimes x_{k})\\ 
&= \hat t_{1,k}(0)-(-1)^{c\abs{x_{k}}}\hat t_{k,k}(0)\\
&=0.
\end{align*}
\noindent This completes the induction step. The proof for $\bar{s}_{k}$ is identical.
\end{proof}

\begin{lemma}
\label{l1a}
Let $k\geq 3$, and $V$ be any graded $R$-module. Then for every $y\in V^{\otimes k}$ and each 
$3\leq j\leq k $, 
\[\hat{s}_{k}\circ(1^{\otimes (k-j)}\otimes\beta_{j})(y)=0,\]
\noindent and
\[\bar{s}_{k}\circ(1^{\otimes (k-j)}\otimes\beta_{j})(y)=0.\]

\end{lemma}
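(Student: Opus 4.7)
The plan is to reduce Lemma~(\ref{l1a}) directly to the preceding Lemma~(\ref{l1}) by iterating the factorization identity $\hat{s}_{k}=\hat{t}_{1,k}\circ(1\otimes\hat{s}_{k-1})$ recorded just above the two lemmas. Applying this identity $k-j$ times in succession yields
\[
\hat{s}_{k}=\hat{t}_{1,k}\circ(1\otimes\hat{t}_{1,k-1})\circ\cdots\circ(1^{\otimes(k-j-1)}\otimes\hat{t}_{1,j+1})\circ(1^{\otimes(k-j)}\otimes\hat{s}_{j}),
\]
so that $1^{\otimes(k-j)}\otimes\hat{s}_{j}$ sits as the innermost (first-applied) factor in a decomposition of $\hat{s}_{k}$. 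The analogous decomposition for $\bar{s}_{k}$ follows verbatim from the identity $\bar{s}_{k}=\bar{t}_{1,k}\circ(1\otimes\bar{s}_{k-1})$.

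Once this factorization is in hand, the lemma is almost immediate. Since $j\geq 3$, Lemma~(\ref{l1}) gives $\hat{s}_{j}\circ\beta_{j}=0$ on $V^{\otimes j}$, and tensoring on the left with the identity map $1^{\otimes(k-j)}$ produces
\[
(1^{\otimes(k-j)}\otimes\hat{s}_{j})\circ(1^{\otimes(k-j)}\otimes\beta_{j})=1^{\otimes(k-j)}\otimes(\hat{s}_{j}\circ\beta_{j})=0.
\]
Composing on the left with the remaining $\hat{t}$-factors of the above decomposition of $\hat{s}_{k}$ then gives $\hat{s}_{k}\circ(1^{\otimes(k-j)}\otimes\beta_{j})(y)=0$ for every $y\in V^{\otimes k}$, and the argument for $\bar{s}_{k}$ is identical using the $\bar{t}$-version of the factorization.

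There is essentially no hard step here; the main thing to watch is the bookkeeping of which tensor slots the various factors act on, and correctly counting the iterations so that $\hat{s}_{j}$ (rather than some $\hat{s}_{m}$ with $m\neq j$) appears as the innermost factor. The induction on $k$ used in Lemma~(\ref{l1}) is not repeated—that work has already been done, and this lemma merely propagates the vanishing from the case where $\beta_{j}$ acts on all $k$ positions to the general case where the first $k-j$ positions are acted on trivially.
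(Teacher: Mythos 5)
Your proof is correct, but it takes a genuinely different route from the paper's. The paper proves Lemma~(\ref{l1a}) by a double induction (an outer induction on $k$ and an inner, descending induction on $j$), expanding $\beta_{j+1}$ via its recursive definition into a $\beta_j$ term plus a term that is killed by the factorization $\hat{s}_{k}=\hat{t}_{k,k}\circ(\hat{s}_{k-1}\otimes 1)$ together with the outer inductive hypothesis. You instead iterate the \emph{other} factorization, $\hat{s}_{k}=\hat{t}_{1,k}\circ(1\otimes\hat{s}_{k-1})$, exactly $k-j$ times to exhibit $1^{\otimes(k-j)}\otimes\hat{s}_{j}$ as the first-applied factor of $\hat{s}_{k}$, and then quote Lemma~(\ref{l1}) in the form $\hat{s}_{j}\circ\beta_{j}=0$ (valid since $j\geq 3$). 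This is legitimate: the coset identity holds in $R[S_k]$, the pairing $R[S_{k-j}]\otimes R[S_j]\to R[S_k]$ is multiplicative, so $(1^{\otimes(k-j)}\otimes\hat{s}_{j})\circ(1^{\otimes(k-j)}\otimes\beta_{j})=1^{\otimes(k-j)}\otimes(\hat{s}_{j}\circ\beta_{j})=0$, and the remaining left factors cannot resurrect a zero map. Your argument is shorter and avoids re-running an induction, at the cost of leaning entirely on the algebra structure of the group-ring action (which the paper has set up, so nothing is missing); the paper's version is more hands-on but essentially re-derives the same vanishing term by term. Note also that your argument degenerates gracefully to the case $j=k$, where it is literally Lemma~(\ref{l1}).
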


\begin{proof}
It suffices to show the statement holds for homogeneous elements in $V^{\otimes k}$.
We proceed by induction. 
For our inductive assumption, assume the lemma holds for $3\leq m<k$. 
That is, for each $m,j$ such that $3\leq j\leq m<k$, 
and every $x\in V^{\otimes m}$, assume
\[\hat{s}_{m}\circ(1^{\otimes (m-j)}\otimes\beta_{j})(x)=0.\] 
\noindent The base case $m=3$ follows by inspection. 
We must show it also holds for $m=k$ and each $j$ such that $3\leq j\leq k$. 
To do this we use a second induction: for some fixed $j$ assume 
\[\hat{s}_{k}\circ(1^{\otimes (k-i)}\otimes\beta_{i})(y)=0\] 
\noindent holds for every $y\in V^{\otimes k}$ and each $i$ such that $j<i\leq k$. 
The base case $j=k-1$ follows by Lemma~(\ref{l1}). 

Notice by using equation~(\ref{EUseful1}) and our first inductive assumption we have
\begin{align*}
&\hat{s}_{k}(x_{k}\tcdots x_{j+2}\otimes\beta_{j}(x_{j}\tcdots x_{1})\otimes x_{j+1})\\ 
& = \hat t_{k,k}(\hat{s}_{k-1}(x_{k}\tcdots x_{j+2}\otimes\beta_{j}(x_{j}\tcdots x_{1}))\otimes x_{j+1})\\
& = \hat t_{k,k}(0)\\
& = 0.
\end{align*}
\noindent Therefore using (in order) our second inductive assumption, equation~(\ref{EDSW}), and the above equality,
we have  
\begin{align*}
0 & =\hat{s}_{k}\circ(1^{\otimes(k-j-1)}\otimes\beta_{j+1})(x_{k}\tcdots x_{1})\\
& = \hat{s}_{k}(x_{k}\tcdots x_{j+2}\otimes\beta_{j+1}(x_{j+1}\tcdots x_{1}))\\
& = \hat{s}_{k}(x_{k}\tcdots x_{j+2}\otimes x_{j+1}\otimes\beta_{j}(x_{j}\tcdots x_{1}))\\
&\quad-(-1)^{c\abs{x_{j+1}}}\hat{s}_{k}(x_{k}\tcdots x_{j+2}\otimes\beta_{j}(x_{j}\tcdots x_{1})\otimes x_{j+1})\\
& = \hat{s}_{k}(x_{k}\tcdots x_{j+2}\otimes x_{j+1}\otimes\beta_{j}(x_{j}\tcdots x_{1}))\\
& =\hat{s}_{k}\circ(1^{\otimes(k-j)}\otimes\beta_{j})(x_{k}\tcdots x_{1}),
\end{align*}
\noindent
where $c=\abs{x_{j}\tcdots x_{1}}$. 
This completes the second induction, and therefore the main induction step. 
The proof for $\bar{s}_{k}$ is similar.
\end{proof}

\begin{lemma}
\label{l2}
Let $k>2$, and $V$ be any graded $R$-module.
Take any homogeneous element $y=x_{k}\tcdots x_{1}\in V^{\otimes k}$. If $\abs{x_{i}}$ is even for each $i$, then
\[(\hat{s}_{k-1}\otimes 1)\circ\beta_{k}(y) = 
(\hat{s}_{k-1}\otimes 1)\circ(1^{\otimes (k-3)}\otimes ((1,2,3)-(1,3,2)-2(2,3,1)))(y),\]
\noindent and if $\abs{x_{i}}$ is odd for each $i$,
\[(\bar{s}_{k-1}\otimes 1)\circ\beta_{k}(y) = 
(\bar{s}_{k-1}\otimes 1)\circ(1^{\otimes (k-3)}\otimes ((1,2,3)+(1,3,2)-2(2,3,1)))(y).\]
\end{lemma}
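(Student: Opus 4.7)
The proof proceeds by induction on $k$, with base case $k=3$. The odd-degree case is formally identical to the even-degree case with $\hat{s}$ replaced by $\bar{s}$: the sign flip on the $(1,3,2)$ term in the statement precisely compensates for the Koszul sign $-1$ picked up when $(1,3,2)$ transposes two adjacent odd-degree factors, so both sides reduce to the same unsigned linear combination of permuted tensors. I focus below on the even-degree case.

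For the base case $k=3$, I would expand $\beta_3(x_3\otimes x_2\otimes x_1) = x_3x_2x_1 - x_3x_1x_2 - x_2x_1x_3 + x_1x_2x_3$ via the recursive definition, apply $\hat{s}_2\otimes 1$, separately expand $(\hat{s}_2\otimes 1)\circ((1,2,3)-(1,3,2)-2(2,3,1))$ applied to $x_3\otimes x_2\otimes x_1$, and verify term by term that both sides reduce to the same six-term expression, namely $x_3x_2x_1 - x_2x_3x_1 - x_3x_1x_2 + x_1x_3x_2 - 2x_2x_1x_3 + 2x_1x_2x_3$. This is a finite check.

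For the inductive step with $k\geq 4$, I would apply the recursive formula for $\beta_k$ to write
\[
(\hat{s}_{k-1}\otimes 1)\circ\beta_k(y) = (\hat{s}_{k-1}\otimes 1)(x_k\otimes\beta_{k-1}(x_{k-1}\tcdots x_1)) - (\hat{s}_{k-1}\otimes 1)(\beta_{k-1}(x_{k-1}\tcdots x_1)\otimes x_k).
\]
The second summand factors as $\hat{s}_{k-1}(\beta_{k-1}(x_{k-1}\tcdots x_1))\otimes x_k$ and vanishes by Lemma~\ref{l1}, since $k-1>2$. For the first summand, I would invoke the factorization $\hat{s}_{k-1} = \hat{t}_{1,k-1}\circ(1\otimes\hat{s}_{k-2})$ from Section~\ref{S2} to rewrite it as $(\hat{t}_{1,k-1}\otimes 1)(x_k\otimes(\hat{s}_{k-2}\otimes 1)\circ\beta_{k-1}(x_{k-1}\tcdots x_1))$, then apply the induction hypothesis to the inner bracket to replace $\beta_{k-1}$ with $1^{\otimes(k-4)}\otimes\tau$, where $\tau=(1,2,3)-(1,3,2)-2(2,3,1)$ acts on the final three positions. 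Tensoring with $x_k$ shifts $1^{\otimes(k-4)}$ up to $1^{\otimes(k-3)}$, and recomposing $\hat{t}_{1,k-1}\circ(1\otimes\hat{s}_{k-2})=\hat{s}_{k-1}$ collapses the resulting operator to $(\hat{s}_{k-1}\otimes 1)\circ(1^{\otimes(k-3)}\otimes\tau)(y)$, as required.

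The main obstacle is bookkeeping: one has to keep straight which tensor positions each operator acts on (especially the overlap between $\hat{s}_{k-1}$ on positions $1,\ldots,k-1$ and $\tau$ on positions $k-2,k-1,k$), and apply the factorization of $\hat{s}_{k-1}$ in the direction that makes the $\hat{s}_{k-2}\otimes 1$ emerging inside match exactly the operator appearing in the induction hypothesis. The graded signs in the odd-degree case follow automatically once the unsigned identity has been established.
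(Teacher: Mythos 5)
Your even-degree argument is correct, and it takes a genuinely different route from the paper's: the paper fixes $k$ and runs a downward induction on the bracket length $j$, using Lemma~\ref{l1a} to discard the terms in which $\beta_{j-1}$ lands on the wrong side of the last letter, reducing everything to an explicit expansion of $\beta_{3}$ in the last three slots; you instead induct on $k$, discarding $\beta_{k-1}(\cdots)\otimes x_{k}$ via Lemma~\ref{l1} and feeding $x_{k}\otimes\beta_{k-1}(\cdots)$ into the inductive hypothesis through the factorization $\hat{s}_{k-1}=\hat{t}_{1,k-1}\circ(1\otimes\hat{s}_{k-2})$. Both arguments rest on the same vanishing lemmas; yours is arguably tidier since the $\beta_{3}$-computation is quarantined in the base case, and I have checked both your base case and the recomposition step.

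The gap is in the odd-degree case, and it is fatal to the reduction you propose. You verify that the two \emph{right-hand} sides become the same unsigned combination (true: the $+$ on $(1,3,2)$ absorbs its Koszul sign), but not the left-hand sides, and these do \emph{not} agree: $\beta_{k}$ is a single fixed element of $R[S_{k}]$, and its graded action on all-odd tensors yields a different unsigned combination of rearrangements than on all-even ones --- already $\beta_{2}(x\otimes y)$ is $x\otimes y-y\otimes x$ versus $x\otimes y+y\otimes x$. Carry out your own $k=3$ check with $a,b,c$ odd (so $x_{3}=a$, $x_{2}=b$, $x_{1}=c$): one has $\beta_{3}(a\otimes b\otimes c)=a\otimes b\otimes c+a\otimes c\otimes b-b\otimes c\otimes a-c\otimes b\otimes a$, so the left side equals $a\otimes b\otimes c-b\otimes a\otimes c+a\otimes c\otimes b-c\otimes a\otimes b$, while the stated right side equals $a\otimes b\otimes c-b\otimes a\otimes c-a\otimes c\otimes b+c\otimes a\otimes b-2\,b\otimes c\otimes a+2\,c\otimes b\otimes a$. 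These differ, so the odd half of the lemma is false as printed; the correct right-hand operator is $(1,2,3)-(1,3,2)$ with no $(2,3,1)$ term, because the $-(2,3,1)$ and $+(3,2,1)$ constituents of $\beta_{3}$ cancel rather than double under $\bar{s}_{k-1}\otimes 1$ (the Koszul sign of the reversal $(3,2,1)$ on three odd factors is $-1$). The paper's own proof conceals this by declaring the odd case ``similar''; the error propagates to Lemma~\ref{l3} (whose odd-case coefficients should be $2$ for $x_{1}=x_{2}$ and $1$ for $x_{1}=x_{3}$ rather than $0$ and $3$) but cancels in Proposition~\ref{p1} because $2+1=0+3$. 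So no repair of your odd-case paragraph can yield the statement as written --- the statement itself has to be corrected first.
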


\begin{proof}

Assume $\abs{x_{i}}$ is even for each $i$. Using Lemma~(\ref{l1a}) 
\[\hat{s}_{k-1}(x_{k}\tcdots x_{j+1}\otimes\beta_{j-1}(x_{j-1}\tcdots x_{1}))=0\]
\noindent for each $3<j\leq k$, so
\begin{align*}
&(\hat{s}_{k-1}\otimes 1)(x_{k}\tcdots x_{j+1}\otimes\beta_{j}(x_{j}\tcdots x_{1}))\\
&= (\hat{s}_{k-1}\otimes 1)(x_{k}\tcdots x_{j+1}\otimes x_{j}\otimes\beta_{j-1}(x_{j-1}\tcdots x_{1}))\\
&\quad-(\hat{s}_{k-1}(x_{k}\tcdots x_{j+1}\otimes\beta_{j-1}(x_{j-1}\tcdots x_{1})))\otimes x_{j}\\
&= (\hat{s}_{k-1}\otimes 1)(x_{k}\tcdots x_{j+1}\otimes x_{j}\otimes\beta_{j-1}(x_{j-1}\tcdots x_{1})).
\end{align*}
\noindent Then by induction
\[(\hat{s}_{k-1}\otimes 1)\circ\beta_{k}(x_{k}\tcdots x_{1})=
(\hat{s}_{k-1}\otimes 1)(x_{k}\tcdots x_{j+1}\otimes\beta_{j}(x_{j}\tcdots x_{1}))\]
for each $3\leq j\leq k$. In particular, when $j=3$, the fact that 
\[\beta_{3}(x_{3}\otimes x_{2}\otimes x_{1}) = ((1,2,3)-(1,3,2)-(2,3,1)+(3,2,1))(x_{3}\otimes x_{2}\otimes x_{1})\]
\noindent implies
\begin{align*}
&(\hat{s}_{k-1}\otimes 1)\circ\beta_{k}(x_{k}\tcdots x_{1})\\
&=(\hat{s}_{k-1}\otimes 1)\circ(1^{\otimes (k-3)}\otimes ((1,2,3)-(1,3,2)-(2,3,1)+(3,2,1)))(x_{k}\tcdots x_{1}).
\end{align*}
\noindent But Lemma~(\ref{LUseful}) implies
\begin{align*}
&(\hat{s}_{k-1}\otimes 1)\circ(1^{\otimes (k-3)}\otimes (3,2,1))(x_{k}\tcdots x_{1})\\
&=-(\hat{s}_{k-1}\otimes 1)\circ(1^{\otimes (k-3)}\otimes (2,3,1))(x_{k}\tcdots x_{1}).
\end{align*}
\noindent Therefore
\begin{align*}
&(\hat{s}_{k-1}\otimes 1)\circ\beta_{k}(x_{k}\tcdots x_{1})\\ 
&= (\hat{s}_{k-1}\otimes 1)\circ(1^{\otimes (k-3)}\otimes ((1,2,3)-(1,3,2)-2(2,3,1))(x_{k}\tcdots x_{1}).
\end{align*}
\noindent The proof of the second case, when the degrees of each $x_{i}$ are odd, is similar.

\end{proof}

As an immediate consequence of Lemma~(\ref{l2}), we obtain the following lemma.
\begin{lemma}
\label{l3}
Take any homogeneous element $x_{k}\tcdots x_{1}\in V^{\otimes k}$. If $\abs{x_{i}}$ is even for each $i$,
\begin{align*}
(\hat{s}_{k-1}\otimes 1)\circ\beta_{k}(x_{k}\tcdots x_{1}) & =
\begin{cases}
(\hat{s}_{k-1}\otimes 1)(x_{k}\tcdots x_{1}),&\mbox{ if }x_{1}=x_{i}\mbox{ for some }i>3\\
0,&\mbox{ if }x_{1}=x_{2}\\
3(\hat{s}_{k-1}\otimes 1)(x_{k}\tcdots x_{1}),&\mbox{ if }x_{1}=x_{3}\\
\end{cases}
\end{align*}
\noindent
Similarly, if $\abs{x_{i}}$ is odd for each $i$,
\begin{align*}
(\bar{s}_{k-1}\otimes 1)\circ\beta_{k}(x_{k}\tcdots x_{1}) & =
\begin{cases}
(\bar{s}_{k-1}\otimes 1)(x_{k}\tcdots x_{1}),&\mbox{ if }x_{1}=x_{i}\mbox{ for some }i>3\\
0,&\mbox{ if }x_{1}=x_{2}\\
3(\bar{s}_{k-1}\otimes 1)(x_{k}\tcdots x_{1}),&\mbox{ if }x_{1}=x_{3}\\
\end{cases}
\end{align*}

\end{lemma}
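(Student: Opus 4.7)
The plan is to apply Lemma~(\ref{l2}) directly, which reduces $(\hat{s}_{k-1}\otimes 1)\circ\beta_k(x_k\tcdots x_1)$ to the explicit expression $(\hat{s}_{k-1}\otimes 1)\circ(1^{\otimes(k-3)}\otimes((1,2,3)-(1,3,2)-2(2,3,1)))(x_k\tcdots x_1)$. The permutations $(1,2,3)$, $(1,3,2)$, $(2,3,1)$ act on the last block $x_3\otimes x_2\otimes x_1$ as the identity, as $x_3\otimes x_1\otimes x_2$, and as $x_2\otimes x_1\otimes x_3$ respectively (the graded signs are trivial in the even-degree case), so I obtain three explicit tensors on which I must evaluate $(\hat{s}_{k-1}\otimes 1)$. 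I will test each hypothesis on $x_1$ against these three summands, invoking at each step two facts established at the start of Section~(\ref{S1}): $\hat{s}_{k-1}$ annihilates any tensor with a repeated even-degree factor, and $\hat{s}_{k-1}$ changes sign under a transposition of two of its input positions.

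For the case $x_1=x_i$ with $i>3$, in both the $(1,3,2)$ and $(2,3,1)$ summands the first $k-1$ factors carry the common value $x_i$ twice (once in its original slot, once as $x_1$), so $\hat{s}_{k-1}$ kills those terms and only the identity contribution $(\hat{s}_{k-1}\otimes 1)(x_k\tcdots x_1)$ survives. For the case $x_1=x_2$, the $(2,3,1)$ summand has a repeated $x_2=x_1$ among its first $k-1$ factors and vanishes, while the identity and $(1,3,2)$ summands produce identical tensors (swapping $x_1$ and $x_2$ is vacuous when they coincide) and cancel owing to their opposite signs $+1,-1$. For the case $x_1=x_3$, the $(1,3,2)$ summand dies for the same reason, and the first $k-1$ factors of the $(2,3,1)$ summand differ from those of the identity summand by a single transposition of two even-degree factors, producing a sign $-1$ from the antisymmetry of $\hat{s}_{k-1}$; combined with the coefficient $-2$ this contributes $+2$, so the total equals $3(\hat{s}_{k-1}\otimes 1)(x_k\tcdots x_1)$.

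The odd-degree case with $\bar{s}_{k-1}$ is entirely parallel, using the corresponding vanishing and sign properties of $\bar{s}_{k-1}$ on tensors whose entries all have odd degree. The main bookkeeping obstacle, which needs to be checked carefully but not belaboured, is that the graded signs introduced by the $S_3$-action on $x_3\otimes x_2\otimes x_1$ are uniform across the three summands in each parity case, so that the sign pattern of the ungraded calculation is indeed reproduced; this is immediate since each of the three permutations acts on three factors all of the same parity.
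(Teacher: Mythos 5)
Your proof is correct and is essentially the paper's own argument: apply Lemma~(\ref{l2}) and evaluate the three resulting summands using the vanishing of $\hat{s}_{k-1}$ (resp.\ $\bar{s}_{k-1}$) on a tensor with a repeated even-degree (resp.\ odd-degree) factor, together with the sign change under a transposition of two such factors; the paper writes out only the $x_1=x_3$ case and declares the rest similar, whereas you treat all three cases explicitly. One correction to your closing remark, though: in the odd case the Koszul signs of the graded $S_3$-action are \emph{not} uniform --- $(1,3,2)$ transposes two odd-degree factors and so contributes $-1$, while $(1,2,3)$ and $(2,3,1)$ contribute $+1$ --- so the claimed uniformity is false as stated. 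The conclusion you want still holds, but for a different reason: the nonuniform Koszul sign on $(1,3,2)$ is exactly compensated by the sign difference between the two formulas of Lemma~(\ref{l2}) (the coefficient of $(1,3,2)$ is $+1$ in the odd case and $-1$ in the even case), so the net coefficients on the three bare permuted tensors are $+1,-1,-2$ in both parities and your case analysis goes through unchanged.
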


\begin{proof}

Suppose each $x_{i}$ is of even degree, and $x_{1}=x_{3}$. By Lemma~(\ref{l2})
\begin{align*}
&(\hat{s}_{k-1}\otimes 1)\circ\beta_{k}(x_{k}\tcdots x_{1}\otimes x_{2}\otimes x_{1})\\
&=(\hat{s}_{k-1}\otimes 1)(1^{\otimes (k-3)}\otimes ((1,2,3)-(1,3,2)-2(2,3,1))(x_{k}\tcdots x_{1}\otimes x_{2}\otimes x_{1})
\end{align*}
\noindent which in turn is equal to
\begin{align*}
&(\hat{s}_{k-1}\otimes 1)(x_{k}\tcdots x_{1}\otimes x_{2}\otimes x_{1})-\\
&(\hat{s}_{k-1}\otimes 1)(x_{k}\tcdots x_{1}\otimes x_{1}\otimes x_{2})-\\
&2(\hat{s}_{k-1}\otimes 1)(x_{k}\tcdots x_{2}\otimes x_{1}\otimes x_{1}).
\end{align*}
\noindent Notice that 
\[(\hat{s}_{k-1}\otimes 1)(x_{k}\tcdots x_{1}\otimes x_{1}\otimes x_{2})=0,\] 
\noindent and since (by Lemma~(\ref{LUseful}))
$\hat{s}_{k-1}(x_{k}\tcdots x_{2}\otimes x_{1})=-\hat{s}_{k-1}(x_{k}\tcdots x_{1}\otimes x_{2})$, we have
\[(\hat{s}_{k-1}\otimes 1)(x_{k}\tcdots x_{2}\otimes x_{1}\otimes x_{1})=
-(\hat{s}_{k-1}\otimes 1)(x_{k}\tcdots x_{1}\otimes x_{2}\otimes x_{1}).\] 
\noindent Therefore we have
\[(\hat{s}_{k-1}\otimes 1)\circ\beta_{k}(x_{k}\tcdots x_{1}\otimes x_{2}\otimes x_{1})
=3(\hat{s}_{k-1}\otimes 1)(x_{k}\tcdots x_{1}\otimes x_{2}\otimes x_{1}).\]
\noindent The proof for the rest of the cases is similar. 

\end{proof}

Theorem~(\ref{T2}) follows from the following proposition.

\begin{proposition}
\label{p1}
Take any homogeneous element $y=x_{k-1}\tcdots x_{1}\in V^{\otimes k-1}$, for $V$ a graded $R$-module. 
If $\abs{x_{i}}$ is even for each $i$, then for each $1\leq j\leq k-1$
\[(\hat{s}_{k-1}\otimes 1)\circ\beta_{k}(\hat{s}_{k-1}\otimes 1)(y\otimes x_{j}) = 
k((k-2)!)(\hat{s}_{k-1}\otimes 1)(y\otimes x_{j}),\]
\noindent and if $\abs{x_{i}}$ is odd for each $i$,
\[(\bar{s}_{k-1}\otimes 1)\circ\beta_{k}(\bar{s}_{k-1}\otimes 1)(y\otimes x_{j}) = 
k((k-2)!)(\bar{s}_{k-1}\otimes 1)(y\otimes x_{j}).\]

\end{proposition}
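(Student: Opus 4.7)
The plan is to expand the inner $\hat{s}_{k-1}\otimes 1$ as a signed sum over $\sigma\in S_{k-1}$, invoke Lemma~(\ref{l3}) term by term, and tally the contributions according to the position in which $\sigma$ places the distinguished factor $x_j$. First I reduce to the case that $x_1,\ldots,x_{k-1}$ are distinct formal generators of a free graded $R$-module with the given parity, by the same naturality argument used at the end of the proof of Theorem~(\ref{T1}): degree-preserving $R$-linear maps between such free modules commute with the graded $R[S_k]$-action, so it suffices to verify the identity on the universal tensor. After this reduction the only repetition in $y\otimes x_j$ is the designated one, with $x_j$ in the rightmost slot and also at position $j+1$ from the right.

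Writing $(\hat{s}_{k-1}\otimes 1)(y\otimes x_j)=\sum_{\sigma\in S_{k-1}}\mathrm{sgn}(\sigma)\,\sigma(y)\otimes x_j$, for each $\sigma$ let $p(\sigma)\in\{1,\ldots,k-1\}$ denote the position from the right to which $\sigma$ carries $x_j$ inside $\sigma(y)$. Then in $\sigma(y)\otimes x_j$ the rightmost $x_j$ coincides with the factor at position $p(\sigma)+1$, and Lemma~(\ref{l3}) gives
\[(\hat{s}_{k-1}\otimes 1)\circ\beta_k(\sigma(y)\otimes x_j)=\begin{cases}0,&p(\sigma)=1,\\3\,(\hat{s}_{k-1}\otimes 1)(\sigma(y)\otimes x_j),&p(\sigma)=2,\\(\hat{s}_{k-1}\otimes 1)(\sigma(y)\otimes x_j),&p(\sigma)\geq 3.\end{cases}\]
Since $\hat{s}_{k-1}(\sigma(y))=\mathrm{sgn}(\sigma)\hat{s}_{k-1}(y)$, the leading $\mathrm{sgn}(\sigma)$ pairs with a second $\mathrm{sgn}(\sigma)$ to leave a $+1$ in front of every nonvanishing summand, each of which is a copy of $(\hat{s}_{k-1}\otimes 1)(y\otimes x_j)$.

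Finally, for each target position $p\in\{1,\ldots,k-1\}$ there are exactly $(k-2)!$ elements of $S_{k-1}$ sending $x_j$ to that position, since the other $k-2$ factors may be permuted freely. Summing the contributions gives $\bigl(3\cdot(k-2)!+(k-3)(k-2)!\bigr)(\hat{s}_{k-1}\otimes 1)(y\otimes x_j)=k((k-2)!)\,(\hat{s}_{k-1}\otimes 1)(y\otimes x_j)$, as required. The odd-degree case is handled identically, replacing $\hat{s}$ by $\bar{s}$ throughout and invoking the second half of Lemma~(\ref{l3}). The main subtlety is the initial reduction to distinct $x_i$'s, because without it the three cases of Lemma~(\ref{l3}) can overlap when some of the $x_i$ coincide beyond the designated $x_j$, and the clean tally above would not be well-defined; this is precisely what the naturality argument makes safe.
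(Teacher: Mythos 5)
Your proof is correct and follows essentially the same route as the paper's: both expand the inner $\hat{s}_{k-1}\otimes 1$ as a signed sum over $S_{k-1}$, partition the permutations by where they place $x_j$ (the paper's sets $S^{j,1}_{k-1}$, $S^{j,2}_{k-1}$, and $A_j$ are exactly your positions $p(\sigma)=1$, $p(\sigma)=2$, and $p(\sigma)\geq 3$), apply Lemma~(\ref{l3}) term by term with the $\mathrm{sgn}(\sigma)^2=1$ cancellation, and tally $3(k-2)!+(k-3)(k-2)!=k((k-2)!)$. Your preliminary reduction to distinct generators is a small extra precaution the paper omits (in the degenerate case both sides vanish since $\hat{s}_{k-1}(y)=0$), but it does not change the argument.
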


\begin{proof}

For each $1\leq j,n\leq k-1$, let $S^{j,n}_{k-1}\subseteq S_{k-1}$ be the subset of all permutations $\sigma\in S_{k-1}$ satisfying $\sigma(j)=n$. Notice that  
\[\abs{S^{j,n}_{k-1}}=(k-2)!,\]
\noindent and for $m\neq n$
\[\abs{S^{j,n}_{k-1}\cap S^{j,m}_{k-1}}=0,\]
\noindent and
\[\abs{S_{k-1}-(S^{j,n}_{k-1}\cup S^{j,m}_{k-1})}=(k-1)!-2(k-2)!.\]

\noindent Let $A_{j}$ denote the set $S_{k-1}-(S^{j,1}_{k-1}\cup S^{j,2}_{k-1})$. For each $1\leq j\leq k-1$ we can write $\hat{s}_{k}\in S_{k}$ as
\begin{align*}
\hat{s}_{k} &= \csum{\sigma\in S_{k}}{}{sgn(\sigma)\sigma}\\
&= \csum{\sigma\in A_{j}}{}{sgn(\sigma)\sigma}+\csum{\sigma\in S_{k-1}^{j,1}}{}{sgn(\sigma)\sigma}+\csum{\sigma\in S_{k-1}^{j,2}}{}{sgn(\sigma)\sigma}.
\end{align*}
\noindent Notice that for every $\sigma\in S_{k-1}$ we have 
\[\hat{s}_{k-1}\sigma = sgn(\sigma)\hat{s}_{k-1}.\]
\noindent So for each $\sigma\in S_{k-1}^{j,2}$, 
Lemma~(\ref{l3}) (and~(\ref{LUseful})) implies
\begin{align*}
(\hat{s}_{k-1}\otimes 1)\circ\beta_{k}(\sigma(y)\otimes x_{j}) 
&= 3(\hat{s}_{k-1}\otimes 1)(\sigma(y)\otimes x_{j})\\
&= 3\hat{s}_{k-1}(\sigma(y))\otimes x_{j}\\
&= 3sgn(\sigma)(\hat{s}_{k-1}(y)\otimes x_{j}).
\end{align*}
\noindent Likewise Lemma~(\ref{l3}) implies that for every $\sigma\in A_{j}$, 
\[(\hat{s}_{k-1}\otimes 1)\circ\beta_{k}(\sigma(y)\otimes x_{j}) = sgn(\sigma)(\hat{s}_{k-1}(y)\otimes x_{j})\]
\noindent and for every $\sigma\in S_{k-1}^{j,1}$ we have 
\[(\hat{s}_{k-1}\otimes 1)\circ\beta_{k}(\sigma(y)\otimes x_{j}) = 0.\]
\noindent Hence 
\begin{align*}
(\hat{s}_{k-1}\otimes 1)\circ\beta_{k}\bracket{\csum{\sigma\in S_{k-1}^{j,2}}{}{sgn(\sigma)\sigma(y)}}\otimes x_{j} 
&= \csum{\sigma\in S_{k-1}^{j,2}}{}{(\hat{s}_{k-1}\otimes 1)\circ\beta_{k}(sgn(\sigma)\sigma(y)\otimes x_{j})}\\
&= \csum{\sigma\in S_{k-1}^{j,2}}{}{(3 sgn(\sigma)^2 \hat{s}_{k-1}(y)\otimes x_{j})}\\
&= 3(k-2)!(\hat{s}_{k-1}(y)\otimes x_{j}).
\end{align*}
\noindent Similarly
\begin{align*}
(\hat{s}_{k-1}\otimes 1)\circ\beta_{k}\bracket{\csum{\sigma\in A_{j}}{}{sgn(\sigma)\sigma(y)}}\otimes x_{j} 
&= ((k-1)!-2(k-2)!)(\hat{s}_{k-1}(y)\otimes x_{j}),
\end{align*}
\noindent and
\begin{align*}
(\hat{s}_{k-1}\otimes 1)\circ\beta_{k}\bracket{\csum{\sigma\in S_{k-1}^{j,1}}{}{sgn(\sigma)\sigma(y)}}\otimes x_{j} &= 0.
\end{align*}

Putting these facts together,
\begin{align*}
&(\hat{s}_{k-1}\otimes 1)\circ\beta_{k}(\hat{s}_{k-1}\otimes 1)(y\otimes x_{j})\\ 
&= (\hat{s}_{k-1}\otimes 1)\circ\beta_{k}\bracket{\csum{\sigma\in S_{k}}{}{sgn(\sigma)\sigma(y)}}\otimes x_{j}\\
&=(\hat{s}_{k-1}\otimes 1)\circ\beta_{k}\bracket{\csum{\sigma\in A_{j}}{}{sgn(\sigma)\sigma(y)}+
\csum{\sigma\in S_{k-1}^{j,1}}{}{sgn(\sigma)\sigma(y)}+\csum{\sigma\in S_{k-1}^{j,2}}{}{sgn(\sigma)\sigma(y)}}\otimes x_{j}\\
&=((k-1)!-2(k-2)!)(\hat{s}_{k-1}(y)\otimes x_{j})+0+3(k-2)!(\hat{s}_{k-1}(y)\otimes x_{j})\\
&=k(k-2)!(\hat{s}_{k-1}\otimes 1)(y\otimes x_{j}).
\end{align*}
\noindent The proof for second part of the proposition is similar.
\end{proof}


\section{Calculating $c_{n,2}$ and $d_{n,2}$ for $n\geq 1$}
\label{S3}
Like in the previous section we will work in the more general context of graded $R$-modules,
and calculate $c_{n,2}$ (and $d_{n,2}$) by proving the equality 
$(\hat{s}_{2}^{\otimes n}\otimes 1)\circ\beta_{2n+1}\circ(\hat{s}_{2}^{\otimes n}\otimes 1)(x)
=\pm (3^{n})(\hat{s}_{2}^{\otimes n}\otimes 1)(x)$ 
holds for certain homogeneous tensors $x$ whose factors are of even degree (or odd degree for the calculation of $d_{n,2}$). 
We begin working our way towards a proof of this starting with a few technical lemmas.

\begin{lemma}
\label{l4}
Let $V$ be any graded $R$-module, and $x_{1},x_{2}\in V$ any homogeneous elements. Let
$\sigma_{1},...,\sigma_{k}\in S_{2}$ be any $k>1$ choices of the two elements in $S_{2}=\{(12),(21)\}$,
and take 
$$y=x_{\sigma_{1}(1)}\otimes 
(x_{\sigma_{k}(1)}\otimes x_{\sigma_{k}(2)}\tcdots x_{\sigma_{2}(1)}\otimes x_{\sigma_{2}(2)})
\otimes x_{\sigma_{1}(2)}\in V^{\otimes 2k},$$
and 
$$z=(x_{\sigma_{k}(1)}\otimes x_{\sigma_{k}(2)}\tcdots x_{\sigma_{2}(1)}\otimes x_{\sigma_{2}(2)})
\otimes x_{\sigma_{1}(2)}\otimes x_{\sigma_{1}(1)}\in V^{\otimes 2k}.$$
\begin{romanlist}
\item Suppose $\abs{x_{1}}$ and $\abs{x_{2}}$ are both odd, and let $n\geq 0$ be the number of 
times $\sigma_{i}=\sigma_{1}$ for $i>1$. Then
$$\bar{s}_{2}^{\otimes k}\circ(1\otimes\beta_{2k-1})(y)
=(-1)^{k-1}(-2)^{n}(\bar{s}_{2}(x_{\sigma_{1}(1)}\otimes x_{\sigma_{1}(2)}))^{\otimes k}.$$
Furthermore, 
$$\bar{s}_{2}^{\otimes k}\circ(\beta_{2k-1}\otimes 1)(z)
=-(\bar{s}_{2}^{\otimes k}\circ(1\otimes\beta_{2k-1})(y)).$$

\item Suppose $\abs{x_{1}}$ and $\abs{x_{2}}$ are both even. If $\sigma_{2i}=\sigma_{1}$ for 
some $i$, then  
$$\hat{s}_{2}^{\otimes k}\circ(1\otimes\beta_{2k-1})(y)=0.$$
Otherwise, if $m\geq 0$ is the number of times $\sigma_{2i+1}=\sigma_{1}$ for $i>0$,
$$\hat{s}_{2}^{\otimes k}\circ(1\otimes\beta_{2k-1})(y)
=(-2)^{m}(3^{\floor{\frac{k}{2}}})(\hat{s}_{2}(x_{\sigma_{1}(1)}\otimes x_{\sigma_{1}(2)}))^{\otimes k}.$$
Furthermore,
$$\hat{s}_{2}^{\otimes k}\circ(\beta_{2k-1}\otimes 1)(z)
=(-1)^{k}(\hat{s}_{2}^{\otimes k}\circ(1\otimes\beta_{2k-1})(y)).$$
\end{romanlist}
\end{lemma}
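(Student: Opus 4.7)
The plan is to prove the lemma by simultaneous induction on $k$, establishing both the $(1\otimes\beta_{2k-1})$ formula and the comparison with $(\beta_{2k-1}\otimes 1)$ at the same time, since the two halves will feed into one another in the inductive step. The base case $k=2$ reduces to direct computation: expand $\beta_3(a_3\otimes a_2\otimes a_1) = [a_3,[a_2,a_1]]$ as a signed sum of six permutations, apply the appropriate symmetrizer $\bar{s}_2^{\otimes 2}$ or $\hat{s}_2^{\otimes 2}$, and verify by inspection the four cases corresponding to the choices of $\sigma_1,\sigma_2\in S_2$; this confirms the constants $(-1)^{k-1}(-2)^n$ and $(-2)^m \cdot 3^{\lfloor k/2\rfloor}$ at the base.

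For the inductive step on the first formula, I would apply the recursive definition of the Dynkin--Specht--Wever element,
\[
\beta_{2k-1}(a_{2k-1}\otimes\cdots\otimes a_1)
= a_{2k-1}\otimes\beta_{2k-2}(a_{2k-2}\otimes\cdots\otimes a_1)
-(-1)^{\varepsilon}\beta_{2k-2}(a_{2k-2}\otimes\cdots\otimes a_1)\otimes a_{2k-1},
\]
with $a_{2k-1}=x_{\sigma_k(1)}$ and $a_{2k-2}=x_{\sigma_k(2)}$. After prepending $x_{\sigma_1(1)}$ this splits $(1\otimes\beta_{2k-1})(y)$ into two summands. For the first summand the initial $\bar{s}_2$ acts on the pair $(x_{\sigma_1(1)},x_{\sigma_k(1)})$: when $\sigma_k=\sigma_1$ this pair collapses (since $\bar{s}_2$ and $\hat{s}_2$ annihilate $x\otimes x$ on odd- and even-degree generators respectively), otherwise it produces $\pm\bar{s}_2(x_{\sigma_1(1)}\otimes x_{\sigma_1(2)})$ with a tracked sign, while the remaining factor $\beta_{2k-2}(x_{\sigma_k(2)}\otimes\cdots\otimes x_{\sigma_1(2)})$ is of exactly the shape to feed into the $(1\otimes\beta_{2(k-1)-1})$ inductive hypothesis at level $k-1$. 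For the second summand the right-end factor is now $x_{\sigma_k(1)}$, so after regrouping it is the tail $\bar{s}_2$ that acts on a pair ending in $x_{\sigma_k(1)}$; this matches the $(\beta_{2(k-1)-1}\otimes 1)$ half of the inductive hypothesis applied to a $z$-like tensor at level $k-1$. Combining the two contributions while tracking the running counts $n$ and $m$ (updated by $1$ exactly when $\sigma_k=\sigma_1$) gives the predicted constants.

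The second formula of each part---comparing $(\beta_{2k-1}\otimes 1)(z)$ with $(1\otimes\beta_{2k-1})(y)$---comes essentially for free from the observation that $z$ is obtained from $y$ by cyclically moving the outermost factor $x_{\sigma_1(1)}$ from the leftmost to the rightmost position, so that $\beta_{2k-1}$ acts on the same $(2k-1)$-fold tensor in both cases. The only difference lies in which pairs $\bar{s}_2^{\otimes k}$ groups together; tracking how $x_{\sigma_1(1)}$ is moved across the $2k-1$ factors of odd (respectively even) degree produces the global sign $-1$ in part~(i) and $(-1)^k$ in part~(ii).

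The hard part will be the combinatorial bookkeeping: correctly tracking the Koszul signs from the recursive $\beta$-expansion, the signs from $\bar{s}_2\sigma = \mathrm{sgn}(\sigma)\bar{s}_2$ when swapping within a pair, and above all explaining the appearance of $3^{\lfloor k/2\rfloor}$ in the even-degree case. I expect this power of $3$ to arise iteratively: each pass of the recursion, in the case where $\sigma_{2i+1}\neq\sigma_1$, forces a pair to contain identical factors after $\beta$-expansion, and the resulting cancellation versus reinforcement of symmetrizer terms produces an extra factor of $3$ every other step (parallel to the phenomenon in Lemma~(\ref{l3})), which accumulates to $3^{\lfloor k/2\rfloor}$. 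Pinning down the parity condition that causes the whole expression to vanish when some $\sigma_{2i}=\sigma_1$ will be the most delicate point of the sign analysis.
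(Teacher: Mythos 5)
Your strategy coincides with the paper's: a simultaneous induction on $k$ carrying both the $(1\otimes\beta_{2k-1})$ evaluation and the $(\beta_{2k-1}\otimes 1)$ comparison, base case $k=2$ by inspection, inductive step via the recursive definition of $\beta$, and a case split on whether $\sigma_{k}=\sigma_{1}$ using the vanishing of $\bar{s}_{2}$ and $\hat{s}_{2}$ on repeated factors. Two points in your plan need repair before it goes through. First, a single application of the recursion leaves a block $\beta_{2k-2}(x_{\sigma_{k}(2)}\tcdots x_{\sigma_{1}(2)})$, which is \emph{not} of the shape of the inductive hypothesis: the hypothesis at level $k-1$ concerns $1\otimes\beta_{2k-3}$ and $\beta_{2k-3}\otimes 1$ applied to $(2k-2)$-fold tensors, and $\beta_{2k-2}\neq 1\otimes\beta_{2k-3}$ (the cyclic term in $\beta_{2k-2}=(1\otimes\beta_{2k-3})(1-(2,3,\dots,2k-2,1))$ is exactly what you are dropping). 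You must expand the recursion twice, down to $\beta_{2k-3}(y^{\prime})$ with $y^{\prime}$ the last $2k-3$ factors of $y$; only then do the resulting four terms position the $\beta_{2k-3}$ block compatibly with the pair grouping of $\bar{s}_{2}^{\otimes k}$ and with both halves of the inductive hypothesis. This is precisely what the paper does. Second, the comparison of $(\beta_{2k-1}\otimes 1)(z)$ with $(1\otimes\beta_{2k-1})(y)$ does not come "essentially for free": although $\beta_{2k-1}$ acts on the identical $(2k-1)$-fold tensor in both cases, $\bar{s}_{2}^{\otimes k}$ is not the full symmetrizer, so moving $x_{\sigma_{1}(1)}$ from the left end to the right end changes every one of the $k$ pairs, and the paper runs a second, parallel four-term expansion and case analysis for $z$. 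Your decision to carry the $z$-statement along in the induction is correct precisely because this separate computation is needed and feeds back into the $y$-step at the next level; your heuristic for the factor $3^{\floor{k/2}}$ (a factor of $3$ accruing every other step, as in Lemma~(\ref{l3})) matches what actually happens, namely the coefficient $(2+(-1)^{k})$ appearing in the even-degree case.
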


\begin{proof}[Proof of part (i)]
With our choice of $y\in V^{\otimes 2k}$ defined in the statement of the lemma, 
it will be convenient to let $y^{\prime}\in V^{\otimes 2k-3}$ denote 
$(x_{\sigma_{k-1}(1)} \tcdots x_{\sigma_{2}(2)}\otimes x_{\sigma_{1}(2)})$ -
that is, the tensor of the last $2k-3$ factors of $y$. 
Also, let $n^{\prime}$ be the number of choices of $i$ such that $1<i<k$ and 
$\sigma_{i}=\sigma_{1}$, and $n$ be the number of choices of $i$ such that $1<i\leq k$ and 
$\sigma_{i}=\sigma_{1}$. 

Part (i) holds for $k=2$ by inspection. Assume it holds for some $k-1\geq 2$. In
particular, our inductive assumptions are
\begin{equation}
\label{l4e0a2}
(\bar{s}_{2}^{\otimes k-1})(\beta_{2k-3}(y^{\prime})\otimes x_{\sigma_{1}(1)})
=-(\bar{s}_{2}^{\otimes k-1})(x_{\sigma_{1}(1)}\otimes\beta_{2k-3}(y^{\prime})),
\end{equation} 
and 
\begin{equation}
\label{l4e0c2}
(\bar{s}_{2}^{\otimes k-1})(x_{\sigma_{1}(1)}\otimes\beta_{2k-3}(y^{\prime}))
=(-1)^{k-2}(-2)^{n^{\prime}}
(\bar{s}_{2}(x_{\sigma_{1}(1)}\otimes x_{\sigma_{1}(2)}))^{\otimes k-1}.
\end{equation}

Noting $x_{1}$ and $x_{2}$ are of odd degree, one has the following equality:
\begin{align}
(1\otimes\beta_{2k-1})(y)
&=x_{\sigma_{1}(1)}\otimes x_{\sigma_{k}(1)}\otimes 
\beta_{2k-2}(x_{\sigma_{k}(2)}\tcdots x_{\sigma_{2}(2)}\otimes x_{\sigma_{1}(2)})\label{l4e12}\\
&\quad-x_{\sigma_{1}(1)}\otimes 
\beta_{2k-2}(x_{\sigma_{k}(2)}\tcdots x_{\sigma_{2}(2)}\otimes x_{\sigma_{1}(2)})
\otimes x_{\sigma_{k}(1)}\notag\\
&=x_{\sigma_{1}(1)}\otimes x_{\sigma_{k}(1)}\otimes x_{\sigma_{k}(2)}\otimes 
\beta_{2k-3}(y^{\prime})\notag\\
&\quad+x_{\sigma_{1}(1)}\otimes x_{\sigma_{k}(1)}\otimes 
\beta_{2k-3}(y^{\prime})
\otimes x_{\sigma_{k}(2)}\notag\\
&\quad-x_{\sigma_{1}(1)}\otimes x_{\sigma_{k}(2)}\otimes
\beta_{2k-3}(y^{\prime})
\otimes x_{\sigma_{k}(1)}\notag\\
&\quad-x_{\sigma_{1}(1)}\otimes 
\beta_{2k-3}(y^{\prime})
\otimes x_{\sigma_{k}(2)}\otimes x_{\sigma_{k}(1)}\notag.
\end{align}
Let us assume $\sigma_{1}\neq\sigma_{k}$. Then $x_{\sigma_{k}(2)}=x_{\sigma_{1}(1)}$,
$x_{\sigma_{k}(1)}=x_{\sigma_{1}(2)}$. We replace $x_{\sigma_{k}(2)}$ with $x_{\sigma_{1}(1)}$,
and $x_{\sigma_{k}(1)}$ with $x_{\sigma_{1}(2)}$ in equation~(\ref{l4e12}).
Since $\bar{s}_{2}(x_{\sigma_{1}(1)}\otimes x_{\sigma_{1}(1)})=0$, and  
using equations~(\ref{l4e12}) and~(\ref{l4e0a2}),  
\begin{align}
&(\bar{s}_{2}^{\otimes k})\circ(1\otimes\beta_{2k-1})(y)\notag\\
&=\bar{s}_{2}(x_{\sigma_{1}(1)}\otimes x_{\sigma_{1}(2)})\otimes 
(\bar{s}_{2}^{\otimes k-1})(x_{\sigma_{1}(1)}\otimes\beta_{2k-3}
(y^{\prime}))\notag\\
&\quad-\bar{s}_{2}(x_{\sigma_{1}(1)}\otimes x_{\sigma_{1}(2)})\otimes 
(\bar{s}_{2}^{\otimes k-1})(x_{\sigma_{1}(1)}\otimes\beta_{2k-3}
(y^{\prime}))\notag\\
&\quad-(\bar{s}_{2}^{\otimes k-1})(x_{\sigma_{1}(1)}\otimes 
\beta_{2k-3}(y^{\prime}))
\otimes \bar{s}_{2}(x_{\sigma_{1}(1)}\otimes x_{\sigma_{1}(2)}).\notag\\
&=-(\bar{s}_{2}^{\otimes k-1})(x_{\sigma_{1}(1)}\otimes 
\beta_{2k-3}(y^{\prime}))
\otimes \bar{s}_{2}(x_{\sigma_{1}(1)}\otimes x_{\sigma_{1}(2)}).\notag
\end{align}
Then by equation~(\ref{l4e0c2}),
\begin{align*}
(\bar{s}_{2}^{\otimes k})\circ(1\otimes\beta_{2k-1})(y)
=-(-1)^{k-2}(-2)^{n^{\prime}}
(\bar{s}_{2}(x_{\sigma_{1}(1)}\otimes x_{\sigma_{1}(2)}))^{\otimes k},
\end{align*}
and since our assumption is that $\sigma_{1}\neq \sigma_{k}$, then $n^{\prime}=n$, and so we are done. 

Next, let us assume $\sigma_{1}=\sigma_{k}$. 
Then $x_{\sigma_{k}(1)}=x_{\sigma_{1}(1)}$, and $x_{\sigma_{k}(2)}=x_{\sigma_{1}(2)}$. 
Thus we replace $x_{\sigma_{k}(1)}$ with $x_{\sigma_{1}(1)}$ and
$x_{\sigma_{k}(2)}$ with $x_{\sigma_{1}(2)}$ in equation~(\ref{l4e12}), and using the fact 
that $\bar{s}_{2}(x_{\sigma_{1}(1)}\otimes x_{\sigma_{1}(1)})=0$, we obtain 
\begin{align}
(\bar{s}_{2}^{\otimes k})\circ(1\otimes\beta_{2k-1})(y)
&=-\bar{s}_{2}(x_{\sigma_{1}(1)}\otimes x_{\sigma_{1}(2)})
\otimes\bar{s}_{2}^{\otimes k-1}(\beta_{2k-3}(y^{\prime})\otimes x_{\sigma_{1}(1)})\label{l4e4}\\
&\quad\quad-\bar{s}_{2}^{\otimes k-1}(x_{\sigma_{1}(1)}\otimes\beta_{2k-3}(y^{\prime}))
\otimes \bar{s}_{2}(x_{\sigma_{1}(2)}\otimes x_{\sigma_{1}(1)})\notag\\
&=\bar{s}_{2}(x_{\sigma_{1}(1)}\otimes x_{\sigma_{1}(2)})
\otimes\bar{s}_{2}^{\otimes k-1}(x_{\sigma_{1}(1)}\otimes\beta_{2k-3}(y^{\prime}))\notag\\
&\quad\quad-\bar{s}_{2}^{\otimes k-1}(x_{\sigma_{1}(1)}\otimes\beta_{2k-3}(y^{\prime}))
\otimes (-\bar{s}_{2}(x_{\sigma_{1}(1)}\otimes x_{\sigma_{1}(2)}))\notag
\end{align}
So by equation~(\ref{l4e0c2}), 
\begin{align*}
(\bar{s}_{2}^{\otimes k})\circ(1\otimes\beta_{2k-1})(y)
=2(-1)^{k-2}(-2)^{n^{\prime}}
(\bar{s}_{2}(x_{\sigma_{1}(1)}\otimes x_{\sigma_{1}(2)}))^{\otimes k}\\
=(-1)^{k-1}(-2)^{n^{\prime}+1}
(\bar{s}_{2}(x_{\sigma_{1}(1)}\otimes x_{\sigma_{1}(2)}))^{\otimes k}.
\end{align*}
Since $\sigma_{1}=\sigma_{k}$, then $n^{\prime}+1=n$, and we are done. 

To complete the induction we have to show 
$$\bar{s}_{2}^{\otimes k}\circ(\beta_{2k-1}\otimes 1)(z)
=-(\bar{s}_{2}^{\otimes k}\circ(1\otimes\beta_{2k-1})(y)),$$ 
where $z$ is as defined in the statement of the lemma. 
Having found $(\bar{s}_{2}^{\otimes k})\circ(1\otimes\beta_{2k-1})(y)$ above, this is the same 
as proving the equality
$$\bar{s}_{2}^{\otimes k}\circ(\beta_{2k-1}\otimes 1)(z)
=(-1)^{k}(-2)^{n}(\bar{s}_{2}(x_{\sigma_{1}(1)}\otimes x_{\sigma_{1}(2)}))^{\otimes k}.$$
Thus we follow a similar argument for $z$ as we did for $y$ above. 
First, since $z=x_{\sigma_{k}(1)}\otimes x_{\sigma_{k}(2)}\otimes y^{\prime}\otimes x_{\sigma_{1}(1)}$, 
\begin{align}
(\beta_{2k-1}\otimes 1)(z)
&=x_{\sigma_{k}(1)}\otimes x_{\sigma_{k}(2)}\otimes 
\beta_{2k-3}(y^{\prime})\otimes x_{\sigma_{1}(1)}\label{l4e12i}\\
&\quad+x_{\sigma_{k}(1)}\otimes\beta_{2k-3}(y^{\prime})
\otimes x_{\sigma_{k}(2)}\otimes x_{\sigma_{1}(1)}\notag\\
&\quad-x_{\sigma_{k}(2)}\otimes\beta_{2k-3}(y^{\prime})
\otimes x_{\sigma_{k}(1)}\otimes x_{\sigma_{1}(1)}\notag\\
&\quad-\beta_{2k-3}(y^{\prime})
\otimes x_{\sigma_{k}(2)}\otimes x_{\sigma_{k}(1)}\otimes x_{\sigma_{1}(1)}\notag.
\end{align}
As before, let us first assume $\sigma_{1}\neq\sigma_{k}$. Then
\begin{align*}
(\bar{s}_{2}^{\otimes k})\circ(\beta_{2k-1}\otimes 1)(y)
&=(-\bar{s}_{2}(x_{\sigma_{1}(1)}\otimes x_{\sigma_{1}(2)}))
\otimes\bar{s}_{2}^{\otimes k-1}(\beta_{2k-3}(y^{\prime})\otimes x_{\sigma_{1}(1)})\\
&\quad-(\bar{s}_{2}^{\otimes k-1})
(x_{\sigma_{1}(1)}\otimes \beta_{2k-3}(y^{\prime}))
\otimes (-\bar{s}_{2}(x_{\sigma_{1}(1)}\otimes x_{\sigma_{1}(2)}))\\
&\quad-(\bar{s}_{2}^{\otimes k-1})(\beta_{2k-3}(y^{\prime})\otimes x_{\sigma_{1}(1)})
\otimes (-\bar{s}_{2}(x_{\sigma_{1}(1)}\otimes x_{\sigma_{1}(2)})).
\end{align*}
So by equations~(\ref{l4e0a2}) and~(\ref{l4e0c2}),
\begin{align*}
(\bar{s}_{2}^{\otimes k})\circ(\beta_{2k-1}\otimes 1)(y)
=(-1)^{k}(-2)^{n^{\prime}}
(\bar{s}_{2}(x_{\sigma_{1}(1)}\otimes x_{\sigma_{1}(2)}))^{\otimes k},
\end{align*}
and since we are assuming $\sigma_{1}\neq \sigma_{k}$, then $n^{\prime}=n$, so we are done. 

Next, assume $\sigma_{1}=\sigma_{k}$. By equation~(\ref{l4e12i}) 
\begin{align*}
(\bar{s}_{2}^{\otimes k})\circ(\beta_{2k-1}\otimes 1)(y)
&=\bar{s}_{2}(x_{\sigma_{1}(1)}\otimes x_{\sigma_{1}(2)})
\otimes\bar{s}_{2}^{\otimes k-1}(\beta_{2k-3}(y^{\prime})\otimes x_{\sigma_{1}(1)})\\
&\quad\quad+\bar{s}_{2}^{\otimes k-1}(x_{\sigma_{1}(1)}\otimes\beta_{2k-3}(y^{\prime}))
\otimes (-\bar{s}_{2}(x_{\sigma_{1}(1)}\otimes x_{\sigma_{1}(2)})),
\end{align*}
and so using equation~(\ref{l4e0c2}) we obtain
\begin{align*}
(\bar{s}_{2}^{\otimes k})\circ(\beta_{2k-1}\otimes 1)(y)
=(-1)^{k}(-2)^{n^{\prime}+1}
(\bar{s}_{2}(x_{\sigma_{1}(1)}\otimes x_{\sigma_{1}(2)}))^{\otimes k}.
\end{align*}
Since $\sigma_{1}=\sigma_{k}$, then $n^{\prime}+1=n$ .
This completes the induction for part (i). 
\end{proof}

\begin{proof}[Proof of part (ii)]

We follow along a similar line as the proof of part (i), with $y\in V^{\otimes 2k}$ and 
$y^{\prime}\in V^{\otimes 2k-3}$ defined as before (but this time with $x_{1}$ and $x_{2}$
having both even degree). Let $m^{\prime}$ be the number of
choices of $i$ such that $1<2i+1<k$ and $\sigma_{2i+1}=\sigma_{1}$, and 
$m$ be the number of choices of $i$ such that $1<2i+1\leq k$ and $\sigma_{2i+1}=\sigma_{1}$.

Part (ii) holds for $k=2$ by inspection. Let us assume part (ii) holds for some $k-1\geq 2$. In
particular, our inductive assumptions are as follows: first, 
\begin{equation}
\label{l4e0a}
(\hat{s}_{2}^{\otimes k-1})(\beta_{2k-3}(y^{\prime})\otimes x_{\sigma_{1}(1)})
=(-1)^{k-1}(\hat{s}_{2}^{\otimes k-1})(x_{\sigma_{1}(1)}\otimes\beta_{2k-3}(y^{\prime})),
\end{equation} 
and whenever $\sigma_{1}=\sigma_{2i}$ for some $i$ such that $2i<k$, we have
\begin{equation}
\label{l4e0b}
(\hat{s}_{2}^{\otimes k-1})(x_{\sigma_{1}(1)}\otimes\beta_{2k-3}(y^{\prime}))=0.
\end{equation}
Otherwise,
\begin{equation}
\label{l4e0c}
(\hat{s}_{2}^{\otimes k-1})(x_{\sigma_{1}(1)}\otimes\beta_{2k-3}(y^{\prime}))
=(-2)^{m^{\prime}}(3^{\floor{\frac{k-1}{2}}})(\hat{s}_{2}
(x_{\sigma_{1}(1)}\otimes x_{\sigma_{1}(2)}))^{\otimes k-1}.
\end{equation}

Next, observe the following equality:
\begin{align}
(1\otimes\beta_{2k-1})(y)
&=x_{\sigma_{1}(1)}\otimes x_{\sigma_{k}(1)}\otimes 
\beta_{2k-2}(x_{\sigma_{k}(2)} \tcdots x_{\sigma_{2}(2)}\otimes x_{\sigma_{1}(2)})\label{l4e1}\\
&\quad-x_{\sigma_{1}(1)}\otimes 
\beta_{2k-2}(x_{\sigma_{k}(2)} \tcdots x_{\sigma_{2}(2)}\otimes x_{\sigma_{1}(2)})
\otimes x_{\sigma_{k}(1)}\notag\\
&=x_{\sigma_{1}(1)}\otimes x_{\sigma_{k}(1)}\otimes x_{\sigma_{k}(2)}\otimes 
\beta_{2k-3}(y^{\prime})\notag\\
&\quad-x_{\sigma_{1}(1)}\otimes x_{\sigma_{k}(1)}\otimes 
\beta_{2k-3}(y^{\prime})
\otimes x_{\sigma_{k}(2)}\notag\\
&\quad-x_{\sigma_{1}(1)}\otimes x_{\sigma_{k}(2)}\otimes
\beta_{2k-3}(y^{\prime})
\otimes x_{\sigma_{k}(1)}\notag\\
&\quad+x_{\sigma_{1}(1)}\otimes 
\beta_{2k-3}(y^{\prime})
\otimes x_{\sigma_{k}(2)}\otimes x_{\sigma_{k}(1)}\notag.
\end{align}
Let us assume $\sigma_{1}\neq\sigma_{k}$. Then $x_{\sigma_{k}(2)}=x_{\sigma_{1}(1)}$,
$x_{\sigma_{k}(1)}=x_{\sigma_{1}(2)}$, and so we replace $x_{\sigma_{k}(2)}$ with $x_{\sigma_{1}(1)}$,
and $x_{\sigma_{k}(1)}$ with $x_{\sigma_{1}(2)}$ in equation~(\ref{l4e1}).
Since $\hat{s}_{2}(x_{\sigma_{1}(1)}\otimes x_{\sigma_{1}(1)})=0$, 
$$\hat{s}_{2}^{\otimes k}(x_{\sigma_{1}(1)}\otimes x_{\sigma_{1}(1)}\otimes
\beta_{2k-3}(y^{\prime})
\otimes x_{\sigma_{k}(1)})=0.$$
Therefore by equations~(\ref{l4e1}) and~(\ref{l4e0a}) we have 
\begin{align}
&(\hat{s}_{2}^{\otimes k})\circ(1\otimes\beta_{2k-1})(y)\label{l4e2}\\
&=\hat{s}_{2}(x_{\sigma_{1}(1)}\otimes x_{\sigma_{1}(2)})\otimes 
(\hat{s}_{2}^{\otimes k-1})(x_{\sigma_{1}(1)}\otimes\beta_{2k-3}
(y^{\prime}))\notag\\
&\quad+\hat{s}_{2}(x_{\sigma_{1}(1)}\otimes x_{\sigma_{1}(2)})\otimes 
(\hat{s}_{2}^{\otimes k-1})(\beta_{2k-3}(y^{\prime})\otimes x_{\sigma_{1}(1)})\notag\\
&\quad\hat{s}_{2}^{\otimes k-1})(x_{\sigma_{1}(1)}\otimes\beta_{2k-3}(y^{\prime})
\otimes (-\hat{s}_{2}(x_{\sigma_{1}(1)}\otimes x_{\sigma_{1}(2)})).\notag\\
&=(1-(-1)^{k-1})\hat{s}_{2}(x_{\sigma_{1}(1)}\otimes x_{\sigma_{1}(2)})\otimes 
(\hat{s}_{2}^{\otimes k-1})(x_{\sigma_{1}(1)}\otimes\beta_{2k-3}
(y^{\prime}))\notag\\
&\quad+(\hat{s}_{2}^{\otimes k-1})(x_{\sigma_{1}(1)}\otimes 
\beta_{2k-3}(y^{\prime}))
\otimes \hat{s}_{2}(x_{\sigma_{1}(1)}\otimes x_{\sigma_{1}(2)}).\notag
\end{align}
Now suppose $\sigma_{2i}=\sigma_{1}$ for some $2i<k$. Then 
$(\hat{s}_{2}^{\otimes k-1})(x_{\sigma_{1}(1)}\otimes\beta_{2k-3}(y^{\prime}))=0$ 
by our inductive assumption, and so 
$$(\hat{s}_{2}^{\otimes k})\circ(1\otimes\beta_{2k-1})(y)=0$$ 
by equation~(\ref{l4e2}). On the other hand, suppose $\sigma_{2i}\neq\sigma_{1}$ 
for all $i$. Recall $m^{\prime}$ is the number of
choices of $i$ such that $1<2i+1<k$ and $\sigma_{2i+1}=\sigma_{1}$, and 
$m$ is the number of choices of $i$ such that $1<2i+1\leq k$ and $\sigma_{2i+1}=\sigma_{1}$.
By equation~(\ref{l4e0c})
\begin{align*}
&(\hat{s}_{2}^{\otimes k})\circ(1\otimes\beta_{2k-1})(y)\\
&=(1-(-1)^{k-1})(-2)^{m^{\prime}}(3^{\floor{\frac{k-1}{2}}})
(\hat{s}_{2}(x_{\sigma_{1}(1)}\otimes x_{\sigma_{1}(2)}))^{\otimes k}\\
&\quad+(-2)^{m^{\prime}}(3^{\floor{\frac{k-1}{2}}})
(\hat{s}_{2}(x_{\sigma_{1}(1)}\otimes x_{\sigma_{1}(2)}))^{\otimes k}\\
&=(2+(-1)^{k})(-2)^{m^{\prime}}(3^{\floor{\frac{k-1}{2}}})
(\hat{s}_{2}(x_{\sigma_{1}(1)}\otimes x_{\sigma_{1}(2)}))^{\otimes k}.
\end{align*}
Since $\sigma_{1}\neq \sigma_{k}$, $m^{\prime}=m$. So when $k$ is odd, $\floor{\frac{k-1}{2}}=\floor{\frac{k}{2}}$,
and $(2+(-1)^{k})(-2)^{m^{\prime}}(3^{\floor{\frac{k-1}{2}}})=(-2)^{m}(3^{\floor{\frac{k}{2}}})$. Likewise, when 
$k$ is even, 
$(2+(-1)^{k})(-2)^{m^{\prime}}(3^{\floor{\frac{k-1}{2}}})=3(-2)^{m}(3^{\floor{\frac{k-1}{2}}})=
(-2)^{m}(3^{\floor{\frac{k}{2}}})$. So in any case
$$(\hat{s}_{2}^{\otimes k})\circ(1\otimes\beta_{2k-1})(y)
=(-2)^{m}(3^{\floor{\frac{k}{2}}})(\hat{s}_{2}(x_{\sigma_{1}(1)}\otimes x_{\sigma_{1}(2)}))^{\otimes k}.$$
This finishes the the case $\sigma_{1}\neq\sigma_{k}$. 

Let us assume $\sigma_{1}=\sigma_{k}$. 
Then we have $x_{\sigma_{k}(1)}=x_{\sigma_{1}(1)}$, and
$x_{\sigma_{k}(2)}=x_{\sigma_{1}(2)}$. So replacing $x_{\sigma_{k}(1)}$ with $x_{\sigma_{1}(1)}$ and
$x_{\sigma_{k}(2)}$ with $x_{\sigma_{1}(2)}$ in equation~(\ref{l4e1}), using equation~(\ref{l4e0a}), 
and the fact that $\hat{s}_{2}(x_{\sigma_{1}(1)}\otimes x_{\sigma_{1}(1)})=0$, one obtains 
\begin{align}
(\hat{s}_{2}^{\otimes k})\circ(1\otimes\beta_{2k-1})(y)
&=-(-1)^{k-1}\hat{s}_{2}(x_{\sigma_{1}(1)}\otimes x_{\sigma_{1}(2)})
\otimes\hat{s}_{2}^{\otimes k-1}(x_{\sigma_{1}(1)}\otimes\beta_{2k-3}(y^{\prime}))\notag\\
&\quad+\hat{s}_{2}^{\otimes k-1}(x_{\sigma_{1}(1)}\otimes\beta_{2k-3}(y^{\prime}))
\otimes (-\hat{s}_{2}(x_{\sigma_{1}(1)}\otimes x_{\sigma_{1}(2)}))\notag.
\end{align}
Suppose $\sigma_{1}=\sigma_{2i}$ for some choice of $i$ such that $2i<k$. Then 
$\hat{s}_{2}^{\otimes k-1}(x_{\sigma_{1}(1)}\otimes\beta_{2k-3}(y^{\prime}))=0$ by
equation~(\ref{l4e0b}), and so 
$$(\hat{s}_{2}^{\otimes k})\circ(1\otimes\beta_{2k-1})(y)=0$$
using equation~(\ref{l4e4}). 

Suppose $\sigma_{1}\neq\sigma_{2i}$ for all choices of $i$ 
such that $2i<k$. Then by equations~(\ref{l4e0c}) and~(\ref{l4e4}), 
\begin{equation}
\label{l4e5}
(\hat{s}_{2}^{\otimes k})\circ(1\otimes\beta_{2k-1})(y)
=((-1)^{k}-1)(-2)^{m^{\prime}}(3^{\floor{\frac{k-1}{2}}})
(\hat{s}_{2}(x_{\sigma_{1}(1)}\otimes x_{\sigma_{1}(2)}))^{\otimes k}.
\end{equation}
Since we are assuming $\sigma_{1}=\sigma_{k}$, by the statement of our lemma one would expect that
$(\hat{s}_{2}^{\otimes k})\circ(1\otimes\beta_{2k-1})(y)=0$
whenever $k$ is even. By equation~(\ref{l4e5}) this is indeed the
case. On the other hand when $k$ is odd, since $\sigma_{1}=\sigma_{k}$ we have
$m=m^{\prime}+1$, and therefore by equation~(\ref{l4e5})
$$(\hat{s}_{2}^{\otimes k})\circ(1\otimes\beta_{2k-1})(y)
=(-2)^{m}(3^{\floor{\frac{k}{2}}})(\hat{s}_{2}(x_{\sigma_{1}(1)}\otimes x_{\sigma_{1}(2)}))^{\otimes k}.$$ 
This finishes the $\sigma_{1}=\sigma_{k}$ case. 

To complete the induction we have to show 
$$\hat{s}_{2}^{\otimes k}\circ(\beta_{2k-1}\otimes 1)(z)
=(-1)^{k}(\hat{s}_{2}^{\otimes k}\circ(1\otimes\beta_{2k-1})(y))$$ 
(where $z$ is defined in the statement of the lemma), as an equality of this form in equation~(\ref{l4e0a}) 
is assumed in our induction step. Having found 
$(\hat{s}_{2}^{\otimes k})\circ(1\otimes\beta_{2k-1})(y)$ above, this is the same as proving the equality
$$\hat{s}_{2}^{\otimes k}\circ(\beta_{2k-1}\otimes 1)(z)
=(-1)^{k}(-2)^{m}(3^{\floor{\frac{k}{2}}})(\hat{s}_{2}(x_{\sigma_{1}(1)}\otimes x_{\sigma_{1}(2)}))^{\otimes k}.$$
Here we have $z=x_{\sigma_{k}(1)}\otimes x_{\sigma_{k}(2)}\otimes y^{\prime}\otimes x_{\sigma_{1}(1)}$, whereas before 
$y=x_{\sigma_{1}(1)}\otimes x_{\sigma_{k}(1)}\otimes x_{\sigma_{k}(2)}\otimes y^{\prime}$. 
None-the-less, an argument similar to the one above shows that this equality is correct.
\end{proof}

The following is a consequence of Lemma~(\ref{l4}).

\begin{lemma}
\label{l5}
Let $V$ be any graded $R$-module, and $x_{1},x_{2}\in V$ any homogeneous elements. Let
$\sigma_{1},...,\sigma_{k}\in S_{2}$ be any $k>1$ choices of the two elements 
in $S_{2}=\{(12),(21)\}$, and let 
$$y=(x_{\sigma_{k}(1)}\otimes x_{\sigma_{k}(2)}\tcdots x_{\sigma_{2}(1)}\otimes x_{\sigma_{2}(2)})
\otimes x_{\sigma_{1}(2)}\in V^{\otimes 2k-1}.$$
\begin{romanlist}
\item Suppose $\abs{x_{1}}$ and $\abs{x_{2}}$ are both odd, and let $n\geq 0$ be the number of 
times $\sigma_{i}=\sigma_{1}$ for $i>1$. Then
$$(\bar{s}_{2}^{\otimes k-1}\otimes 1)\circ\beta_{2k-1}(y)
=(-1)^{k-1}(-2)^{n}(\bar{s}_{2}(x_{\sigma_{1}(1)}\otimes x_{\sigma_{1}(2)}))^{\otimes k-1}
\otimes x_{\sigma_{1}(2)}.$$

\item Suppose $\abs{x_{1}}$ and $\abs{x_{2}}$ are both even. If $\sigma_{2i}=\sigma_{1}$ for 
some $i$, then  
$$(\hat{s}_{2}^{\otimes k-1}\otimes 1)\circ\beta_{2k-1}(y)=0.$$
Otherwise, if $m\geq 0$ is the number of times $\sigma_{2i+1}=\sigma_{1}$ for $i>0$, then
$$(\hat{s}_{2}^{\otimes k-1}\otimes 1)\circ\beta_{2k-1}(y)
=(-1)^{k-1}(-2)^{m}(3^{\floor{\frac{k}{2}}})(\hat{s}_{2}(x_{\sigma_{1}(1)}\otimes x_{\sigma_{1}(2)}))^{\otimes k-1}
\otimes x_{\sigma_{1}(2)}.$$
\end{romanlist}
\end{lemma}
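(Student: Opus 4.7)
The plan is to deduce Lemma~(\ref{l5}) from Lemma~(\ref{l4}) by decomposing $\beta_{2k-1}(y)$ according to its last tensor factor and then running a parity/pigeonhole argument. First observe that $y\in V^{\otimes 2k-1}$ contains $k$ copies of $x_{\sigma_1(2)}$ and $k-1$ copies of $x_{\sigma_1(1)}$: each of the $k-1$ pairs $(x_{\sigma_i(1)},x_{\sigma_i(2)})$ for $i=2,\ldots,k$ contributes one of each, and the trailing $x_{\sigma_1(2)}$ breaks the tie. Since $\beta_{2k-1}(y)$ is a signed sum of permutations of the factors of $y$, each summand shares this multiset. Splitting by the last factor, I write
\[
\beta_{2k-1}(y) = P\otimes x_{\sigma_1(1)} + Q\otimes x_{\sigma_1(2)},
\]
where $P,Q\in V^{\otimes 2k-2}$; every summand of $P$ has $k$ copies of $x_{\sigma_1(2)}$ and $k-2$ copies of $x_{\sigma_1(1)}$, while every summand of $Q$ has $k-1$ of each.

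For part (i), the key step is $\bar{s}_2^{\otimes k-1}(P) = 0$. The operator $\bar{s}_2^{\otimes k-1}$ regroups the $2k-2$ factors into $k-1$ consecutive pairs, and $\bar{s}_2(x\otimes x)=0$ for $x$ of odd degree; because $P$'s factor counts $k-2$ and $k$ differ, such a partition must place two equal factors in some pair, whence the whole product vanishes. Hence
\[
(\bar{s}_2^{\otimes k-1}\otimes 1)\circ\beta_{2k-1}(y) = \bar{s}_2^{\otimes k-1}(Q)\otimes x_{\sigma_1(2)}.
\]
To evaluate $\bar{s}_2^{\otimes k-1}(Q)$, I apply Lemma~(\ref{l4}) part~(i) with $z = y\otimes x_{\sigma_1(1)}$, noting $(\beta_{2k-1}\otimes 1)(z) = \beta_{2k-1}(y)\otimes x_{\sigma_1(1)}$. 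Expanding the left-hand side of that identity via the same decomposition, the $P$-contribution carries a factor of $\bar{s}_2(x_{\sigma_1(1)}\otimes x_{\sigma_1(1)})=0$ and drops out, leaving $\bar{s}_2^{\otimes k-1}(Q)\otimes \bar{s}_2(x_{\sigma_1(2)}\otimes x_{\sigma_1(1)})$. Matching against the explicit right-hand side supplied by Lemma~(\ref{l4}) and cancelling the common tensor factor $\bar{s}_2(x_{\sigma_1(1)}\otimes x_{\sigma_1(2)})$ then delivers the formula claimed in part~(i).

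Part (ii) is handled identically, replacing $\bar{s}_2$ by $\hat{s}_2$ and using $\hat{s}_2(x\otimes x) = 0$ for $x$ of even degree in the pigeonhole step; the two cases of part~(ii) correspond directly to the vanishing and non-vanishing cases of Lemma~(\ref{l4}) part~(ii). The main thing to watch, once the decomposition is in place, is the sign bookkeeping arising from $\bar{s}_2(x_{\sigma_1(2)}\otimes x_{\sigma_1(1)}) = -\bar{s}_2(x_{\sigma_1(1)}\otimes x_{\sigma_1(2)})$ (and its $\hat{s}_2$ analogue) combined with the overall sign relating $(\beta_{2k-1}\otimes 1)(z)$ and $(1\otimes\beta_{2k-1})(y)$ in Lemma~(\ref{l4}); the essential new combinatorial input is the pigeonhole vanishing of $P$.
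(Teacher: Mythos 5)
Your argument is correct in substance but takes a genuinely different route from the paper. The paper proves Lemma~(\ref{l5}) by its own induction on $k$: it expands $\beta_{2k-1}(y)$ into four terms via the recursive definition of $\beta$, kills two of them by the same ``$x_{\sigma_1(2)}$ occurs more often than $x_{\sigma_1(1)}$'' pigeonhole you use, and evaluates the survivors with the inductive hypothesis at $k-1$ together with Lemma~(\ref{l4}). You instead deduce the statement non-inductively from Lemma~(\ref{l4}) at level $k$: writing $\beta_{2k-1}(y)=P\otimes x_{\sigma_1(1)}+Q\otimes x_{\sigma_1(2)}$, killing $\bar{s}_2^{\otimes k-1}(P)$ by pigeonhole (each summand of $P$ carries $k$ copies of $x_{\sigma_1(2)}$ among $2k-2$ slots, so some consecutive pair is $x_{\sigma_1(2)}\otimes x_{\sigma_1(2)}$), and extracting $\bar{s}_2^{\otimes k-1}(Q)$ from the ``Furthermore'' identity of Lemma~(\ref{l4}) applied to $z=y\otimes x_{\sigma_1(1)}$. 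The sign bookkeeping checks out: e.g.\ in part (i), $\bar{s}_2^{\otimes k}((\beta_{2k-1}\otimes 1)(z))=-\bar{s}_2^{\otimes k-1}(Q)\otimes\bar{s}_2(x_{\sigma_1(1)}\otimes x_{\sigma_1(2)})$ equated with $(-1)^{k}(-2)^{n}(\bar{s}_2(x_{\sigma_1(1)}\otimes x_{\sigma_1(2)}))^{\otimes k}$ yields exactly the claimed coefficient $(-1)^{k-1}(-2)^{n}$. Your version is shorter and avoids a second induction; what it costs is the final cancellation step.

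That cancellation is the one place the argument needs shoring up. From $C\otimes w=0$ with $w=\bar{s}_2(x_{\sigma_1(1)}\otimes x_{\sigma_1(2)})$ you cannot in general conclude $C=0$: the lemma is stated for an arbitrary graded $R$-module $V$ (which may have torsion over $R=\plocal$), and even for $V$ free the element $w$ vanishes when $x_1$ and $x_2$ are proportional. Moreover what you actually need is $C\otimes x_{\sigma_1(2)}=0$, which is not formally implied by $C\otimes w=0$. The standard repair is to note that both sides of the asserted identity are obtained by applying fixed elements of $R[S_{2k-1}]$ to $y$, so it suffices to verify the identity in the free module on two generators $\tilde x_1,\tilde x_2$ of the same degrees and push forward along $\tilde x_i\mapsto x_i$; in that universal case $w$ is a nonzero element of a free module, tensoring with it is injective, and your cancellation is legitimate. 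With that reduction made explicit the proof is complete.
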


\begin{proof}[Proof of part (i)]
Part (i) holds for $k=2$ by inspection. For our inductive assumption, 
assume part (i) holds for some $k-1>2$. Let $y$ be a choice
of tensor as defined in the statement of the lemma, and for convenience let 
$y^{\prime}=x_{\sigma_{k-1}(1)}\otimes x_{\sigma_{k-1}(2)}\tcdots x_{\sigma_{2}(1)}\otimes x_{\sigma_{2}(2)}
\otimes x_{\sigma_{1}(2)}\in V^{\otimes 2k-3}$. Let $n$ be the number of times $\sigma_{i}=\sigma_{1}$ 
for $1<i\leq k$, and $n^{\prime}$ be the number of times $\sigma_{i}=\sigma_{1}$ for $1<i<k$.
Note the following equality:
\begin{align}
(\beta_{2k-1})(y)
&=x_{\sigma_{k}(1)}\otimes x_{\sigma_{k}(2)}\otimes 
\beta_{2k-3}(y^{\prime})\label{l5e1i}\\
&\quad+x_{\sigma_{k}(1)}\otimes 
\beta_{2k-3}(y^{\prime})
\otimes x_{\sigma_{k}(2)}\notag\\
&\quad-x_{\sigma_{k}(2)}\otimes
\beta_{2k-3}(y^{\prime})
\otimes x_{\sigma_{k}(1)}\notag\\
&\quad-\beta_{2k-3}(y^{\prime})
\otimes x_{\sigma_{k}(2)}\otimes x_{\sigma_{k}(1)}\notag.
\end{align}
Let us assume $\sigma_{k}\neq\sigma_{1}$, so we can replace $x_{\sigma_{k}(2)}$ with $x_{\sigma_{1}(1)}$ and
$x_{\sigma_{k}(1)}$ with $x_{\sigma_{1}(2)}$ in equation~(\ref{l5e1i}). In this case, 
observe that the factor $x_{\sigma_{1}(2)}$ occurs more 
often than the factor $x_{\sigma_{1}(1)}$ in the tensor $x_{\sigma_{k}(1)}\otimes y^{\prime}$, thus
$$\bar{s}_{2}^{\otimes k-1}(x_{\sigma_{k}(1)}\otimes\beta_{2k-3}(y^{\prime}))=0.$$
Also, if $\sigma_{1}\neq\sigma_{2i}$ for all $1<2i<k$, then by our inductive assumption
\begin{align*}
(\bar{s}_{2}^{\otimes k-2}\otimes 1)\circ\beta_{2k-3}(y^{\prime})
&=(-1)^{k-2}(-2)^{n^{\prime}}
(\bar{s}_{2}(x_{\sigma_{1}(1)}\otimes x_{\sigma_{1}(2)}))^{\otimes k-2}
\otimes x_{\sigma_{1}(2)},
\end{align*}
and by Lemma~(\ref{l4}),
\begin{align*}
(\bar{s}_{2}^{\otimes k-1})(\beta_{2k-3}(y^{\prime})\otimes x_{\sigma_{1}(1)})
&=-(\bar{s}_{2}^{\otimes k-1})(x_{\sigma_{1}(1)}\otimes\beta_{2k-3}(y^{\prime})).
\end{align*}
Therefore, by equation~(\ref{l5e1i})
\begin{align*}
&(\bar{s}_{2}^{\otimes k-1}\otimes 1)\circ(\beta_{2k-1})(y)\\
&=(-1)^{k-2}(-2)^{n^{\prime}}\bar{s}_{2}(x_{\sigma_{1}(2)}\otimes x_{\sigma_{1}(1)})\otimes
(\bar{s}_{2}(x_{\sigma_{1}(1)}\otimes x_{\sigma_{1}(2)}))^{\otimes k-2}
\otimes x_{\sigma_{1}(2)}\\
&=-(-1)^{k-2}(-2)^{n^{\prime}}
(\bar{s}_{2}(x_{\sigma_{1}(1)}\otimes x_{\sigma_{1}(2)}))^{\otimes k-1}
\otimes x_{\sigma_{1}(2)}.
\end{align*}
Since $\sigma_{k}\neq\sigma_{1}$,
$n=n^{\prime}$, and we are done.  

Next we assume $\sigma_{k}=\sigma_{1}$. Replacing $x_{\sigma_{k}(1)}$ with $x_{\sigma_{1}(1)}$ and
$x_{\sigma_{2}(1)}$ with $x_{\sigma_{1}(2)}$ in equation~(\ref{l5e1i}), 
the factor $x_{\sigma_{1}(2)}$ occurs more often than the factor $x_{\sigma_{1}(1)}$ in the tensors $x_{\sigma_{k}(2)}\otimes y^{\prime}$ and $y^{\prime}\otimes x_{\sigma_{k}(2)}$, implying
$\bar{s}_{2}^{\otimes k-1}(x_{\sigma_{k}(2)}\otimes\beta_{2k-3}(y^{\prime}))=0$, and
$\bar{s}_{2}^{\otimes k-1}(\beta_{2k-3}(y^{\prime})\otimes x_{\sigma_{k}(2)})=0$. Also, by Lemma~(\ref{l4})
$$(\bar{s}_{2}^{\otimes k-1})(x_{\sigma_{1}(1)}\otimes\beta_{2k-3}(y^{\prime}))
=(-1)^{k-2}(-2)^{n^{\prime}}(\bar{s}_{2}(x_{\sigma_{1}(1)}\otimes x_{\sigma_{1}(2)}))^{\otimes k-1}.$$
Following along a similar line as the previous case,  
\begin{align*}
(\bar{s}_{2}^{\otimes k-1}\otimes 1)\circ(\beta_{2k-1})(y)
&=2(-1)^{k-2}(-2)^{n^{\prime}}
(\bar{s}_{2}(x_{\sigma_{1}(1)}\otimes x_{\sigma_{1}(2)}))^{\otimes k-1}
\otimes x_{\sigma_{1}(2)}.
\end{align*}
Because we are assuming $\sigma_{k}\neq\sigma_{1}$, then $n=n^{\prime}+1$, and so 
$2(-1)^{k-2}(-2)^{n^{\prime}}=(-1)^{k-1}(-2)^{n}$. This completes the induction.
\end{proof}

\begin{proof}[Proof of part (ii)]
The structure of the proof is similar to that of part (i).
Here, part (ii) holds for $k=2$ by inspection, and our inductive assumption is that it holds for some $k-1>2$.
Let $y$ and $y^{\prime}$ be as in the proof of part (i), and let $m$ be the number of times $\sigma_{2i+1}=\sigma_{1}$ 
for $0<2i+1\leq k$, and $m^{\prime}$ be the number of times $\sigma_{2i+1}=\sigma_{1}$ for $0<2i+1<k$.
Note the following equality:
\begin{align}
(\beta_{2k-1})(y)
&=x_{\sigma_{k}(1)}\otimes x_{\sigma_{k}(2)}\otimes 
\beta_{2k-3}(y^{\prime})\label{l5e1}\\
&\quad-x_{\sigma_{k}(1)}\otimes 
\beta_{2k-3}(y^{\prime})
\otimes x_{\sigma_{k}(2)}\notag\\
&\quad-x_{\sigma_{k}(2)}\otimes
\beta_{2k-3}(y^{\prime})
\otimes x_{\sigma_{k}(1)}\notag\\
&\quad+\beta_{2k-3}(y^{\prime})
\otimes x_{\sigma_{k}(2)}\otimes x_{\sigma_{k}(1)}\notag.
\end{align}
Let us assume $\sigma_{k}\neq\sigma_{1}$, so we replace $x_{\sigma_{k}(2)}$ with $x_{\sigma_{1}(1)}$ and
$x_{\sigma_{k}(1)}$ with $x_{\sigma_{1}(2)}$ in equation~(\ref{l5e1}). In this case, 
observe that the factor $x_{\sigma_{1}(2)}$ occurs more 
often than the factor $x_{\sigma_{1}(1)}$ in the tensor $x_{\sigma_{k}(1)}\otimes y^{\prime}$, thus
$$\hat{s}_{2}^{\otimes k-1}(x_{\sigma_{k}(1)}\otimes\beta_{2k-3}(y^{\prime}))=0.$$
Also, if $\sigma_{1}\neq\sigma_{2i}$ for all $1<2i<k$, then by our inductive assumption
\begin{align}
(\hat{s}_{2}^{\otimes k-2}\otimes 1)\circ\beta_{2k-3}(y^{\prime})
&=(-1)^{k-2}(-2)^{m^{\prime}}(3^{\floor{\frac{k-1}{2}}})
(\hat{s}_{2}(x_{\sigma_{1}(1)}\otimes x_{\sigma_{1}(2)}))^{\otimes k-2}
\otimes x_{\sigma_{1}(2)}\label{l5e2},
\end{align}
and by Lemma~(\ref{l4}),
\begin{align}
(\hat{s}_{2}^{\otimes k-1})(\beta_{2k-3}(y^{\prime})\otimes x_{\sigma_{1}(1)})
&=(-1)^{k-1}(\hat{s}_{2}^{\otimes k-1})(x_{\sigma_{1}(1)}\otimes\beta_{2k-3}(y^{\prime}))\label{l5e3}\\
&=(-1)^{k-1}(-2)^{m^{\prime}}(3^{\floor{\frac{k-1}{2}}})
(\hat{s}_{2}(x_{\sigma_{1}(1)}\otimes x_{\sigma_{1}(2)}))^{\otimes k-1}\notag.
\end{align}
Therefore
\begin{align*}
(\hat{s}_{2}^{\otimes k-1}\otimes 1)\circ(\beta_{2k-1})(y)
&=(2(-1)^{k-1}-1)(-2)^{m^{\prime}}(3^{\floor{\frac{k-1}{2}}})
(\hat{s}_{2}(x_{\sigma_{1}(1)}\otimes x_{\sigma_{1}(2)}))^{\otimes k-1}
\otimes x_{\sigma_{1}(2)}.
\end{align*}
When $k$ is even, $\floor{\frac{k}{2}}=\floor{\frac{k-1}{2}}+1$ and $(2(-1)^{k-1}-1)=-3=(-1)^{k-1}3$; when $k$ is odd, 
$\floor{\frac{k}{2}}=\floor{\frac{k-1}{2}}$ and $(2(-1)^{k-1}-1)=1=(-1)^{k-1}$. Since $\sigma_{k}\neq\sigma_{1}$,
$m=m^{\prime}$, and we are done. If $\sigma_{1}=\sigma_{2i}$ for some $1<2i<k$, both equations~(\ref{l5e2}) 
and~(\ref{l5e3}) are zero by our inductive assumption and Lemma~(\ref{l4}), and so 
$(\hat{s}_{2}^{\otimes k-1}\otimes 1)\circ(\beta_{2k-1})(y)=0$.  

Finally, let us assume $\sigma_{k}=\sigma_{1}$. Replacing $x_{\sigma_{k}(1)}$ with $x_{\sigma_{1}(1)}$ and
$x_{\sigma_{2}(1)}$ with $x_{\sigma_{1}(2)}$ in equation~(\ref{l5e1}), 
notice that the factor $x_{\sigma_{1}(2)}$ occurs more 
often than the factor $x_{\sigma_{1}(1)}$ in the tensors $x_{\sigma_{k}(2)}\otimes y^{\prime}$ and
$y^{\prime}\otimes x_{\sigma_{k}(2)}$, so
$\hat{s}_{2}^{\otimes k-1}(x_{\sigma_{k}(2)}\otimes\beta_{2k-3}(y^{\prime}))=0$, and
$\hat{s}_{2}^{\otimes k-1}(\beta_{2k-3}(y^{\prime})\otimes x_{\sigma_{k}(2)})=0$. 
The rest being similar as before, we have $(\hat{s}_{2}^{\otimes k-1}\otimes 1)\circ(\beta_{2k-1})(y)=0$
when $\sigma_{1}=\sigma_{2i}$ for some $1<2i<k$, and
\begin{align*}
(\hat{s}_{2}^{\otimes k-1}\otimes 1)\circ(\beta_{2k-1})(y)
&=((-1)^{k-2}-1)(-2)^{m^{\prime}}(3^{\floor{\frac{k-1}{2}}})
(\hat{s}_{2}(x_{\sigma_{1}(1)}\otimes x_{\sigma_{1}(2)}))^{\otimes k-1}
\otimes x_{\sigma_{1}(2)}.
\end{align*}
whenever $\sigma_{1}\neq\sigma_{2i}$ for all $1<2i<k$. When $k$ is even, by the statement of the lemma one would expect
this equation to be zero, as we are assuming $\sigma_{k}\neq\sigma_{1}$. This is in fact the case since 
$((-1)^{k-2}-1)=0$ whenever $k$ is even. On the other hand, when $k$ is odd we have $\floor{\frac{k}{2}}=\floor{\frac{k-1}{2}}$ 
and $((-1)^{k-2}-1)=-2=(-1)^{k-1}(-2)$. Also, because $\sigma_{k}=\sigma_{1}$, then $m=m^{\prime}+1$. This completes
the induction.
\end{proof}

Part (ii) of Theorem~(\ref{T2}) follows from the following proposition. 

\begin{proposition}
\label{pii}
Let $V$ be any graded $R$-module, and $x_{1},x_{2}\in V$ any homogeneous elements. Let
$\sigma_{1},...,\sigma_{k}\in S_{2}$ be any $k>1$ choices of the two elements 
in $S_{2}=\{(12),(21)\}$, and let 
$$y=(x_{\sigma_{k}(1)}\otimes x_{\sigma_{k}(2)}\tcdots x_{\sigma_{2}(1)}\otimes x_{\sigma_{2}(2)})
\otimes x_{\sigma_{1}(2)}\in V^{\otimes 2k-1}.$$
\begin{romanlist}
\item If $\abs{x_{1}}$ and $\abs{x_{2}}$ are both odd, then
$$(\bar{s}_{2}^{\otimes k-1}\otimes 1)\circ\beta_{2k-1}\circ(\bar{s}_{2}^{\otimes k-1}\otimes 1)(y)
=\pm 3^{k-1}(\bar{s}_{2}^{\otimes k-1}\otimes 1)(y).$$
\item If $\abs{x_{1}}$ and $\abs{x_{2}}$ are both even, then  
$$(\hat{s}_{2}^{\otimes k-1}\otimes 1)\circ\beta_{2k-1}\circ(\hat{s}_{2}^{\otimes k-1}\otimes 1)(y)
=\pm 3^{k-1}(\hat{s}_{2}^{\otimes k-1}\otimes 1)(y).$$
\end{romanlist}
\end{proposition}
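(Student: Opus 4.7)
The plan is to reduce each case of Proposition~(\ref{pii}) to Lemma~(\ref{l5}) by first expanding the inner symmetrizer as an explicit signed sum of tensors of the same shape as $y$. I describe part~(ii) in detail; part~(i) proceeds by the same template, with Lemma~(\ref{l5})(i) in place of Lemma~(\ref{l5})(ii) and no vanishing constraint on even indices.

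First I would expand $(\hat{s}_{2}^{\otimes k-1}\otimes 1)(y)$. Writing $\hat{s}_{2}=1-(21)$ and observing that a choice $\tau_{j}\in S_{2}$ acting on the $j$-th pair of factors of $y$ replaces the entry $\sigma_{j}$ by $\sigma'_{j}=\tau_{j}\sigma_{j}$, I would obtain
\[(\hat{s}_{2}^{\otimes k-1}\otimes 1)(y)=\csum{(\sigma'_{2},\ldots,\sigma'_{k})\in S_{2}^{k-1}}{}{\bracket{\cprod{j=2}{k}{sgn(\sigma_{j})sgn(\sigma'_{j})}}\,y_{\sigma'}},\]
where $y_{\sigma'}$ has the form of $y$ but with $(\sigma_{2},\ldots,\sigma_{k})$ replaced by $(\sigma'_{2},\ldots,\sigma'_{k})$ and $\sigma'_{1}=\sigma_{1}$ held fixed. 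Each $y_{\sigma'}$ is then precisely a tensor to which Lemma~(\ref{l5})(ii) applies.

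Next I would apply $(\hat{s}_{2}^{\otimes k-1}\otimes 1)\circ\beta_{2k-1}$ termwise. Lemma~(\ref{l5})(ii) kills the terms with $\sigma'_{2i}=\sigma_{1}$ for some even $2i\leq k$, and each surviving term produces the common vector $w_{0}=sgn(\sigma_{1})^{k-1}(\hat{s}_{2}(x_{1}\otimes x_{2}))^{\otimes k-1}\otimes x_{\sigma_{1}(2)}$ multiplied by the scalar $(-1)^{k-1}(-2)^{m(\sigma')}3^{\lfloor k/2\rfloor}$, where $m(\sigma')$ counts the odd $2i+1>1$ with $\sigma'_{2i+1}=\sigma_{1}$. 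The crucial observation is that, once the even-indexed $\sigma'_{2i}$ are pinned to the element of $S_{2}$ distinct from $\sigma_{1}$, the remaining sum factors over the odd indices $j\in\{3,5,\ldots\}\cap[2,k]$, with each local factor evaluating as
\[\csum{\sigma'_{j}\in S_{2}}{}{sgn(\sigma'_{j})(-2)^{[\sigma'_{j}=\sigma_{1}]}}=-3\,sgn(\sigma_{1}),\]
while each forced even-indexed coordinate contributes $-sgn(\sigma_{1})$ to the product.

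Collecting everything using $\lfloor k/2\rfloor+\lfloor(k-1)/2\rfloor=k-1$, the numerical factors combine to $3^{k-1}$, the total power of $sgn(\sigma_{1})$ is $2k-2$ and hence $+1$, and the outer $(-1)^{k-1}$ from Lemma~(\ref{l5})(ii) is cancelled by the $(-1)$-signs picked up in the factored product. What remains is $\cprod{j=2}{k}{sgn(\sigma_{j})}\cdot 3^{k-1}\cdot w_{0}$, which agrees with $3^{k-1}(\hat{s}_{2}^{\otimes k-1}\otimes 1)(y)$ since $(\hat{s}_{2}^{\otimes k-1}\otimes 1)(y)=\cprod{j=2}{k}{sgn(\sigma_{j})}\cdot w_{0}$. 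Part~(i) follows the same recipe: with no vanishing constraint, the full product over $j=2,\ldots,k$ simplifies to $(-3)^{k-1}sgn(\sigma_{1})^{k-1}$, and the same sign cancellation produces $\pm 3^{k-1}$. The main obstacle is precisely this sign bookkeeping: one must verify that the same $\pm 3^{k-1}$ coefficient emerges independently of $\sigma_{1}\in S_{2}$ and the parity of $k$, which requires tracking how $(-2)^{m(\sigma')}$, the forced-choice signs at even positions, and the $sgn(\sigma'_{j})$ weights from expanding $\hat{s}_{2}^{\otimes k-1}$ interact across cases.
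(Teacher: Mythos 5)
Your proposal is correct and is essentially the paper's own argument: expand the inner symmetrizer as a signed sum of tensors of the shape handled by Lemma~(\ref{l5}), apply that lemma termwise (using its vanishing clause at even positions in the even-degree case), and collect the surviving coefficients via the binomial identity $\sum_n\binom{\cdot}{n}2^n=3^{\cdot}$. The only cosmetic difference is that the paper first normalizes to the canonical tensor $x=(x_1\otimes x_2)^{\otimes k-1}\otimes x_1$ (taking $\sigma_1=(21)$ without loss of generality), which absorbs the $sgn(\sigma_j)$ bookkeeping you carry explicitly.
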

\begin{proof}

Without loss of generality assume $\sigma_{1}=(21)$ (so $x_{\sigma_{1}(1)}=x_{2}$ and $x_{\sigma_{1}(2)}=x_{1}$), and 
take the tensor 
$$x=(x_{1}\otimes x_{2})^{\otimes k-1}\otimes x_{1}\in V^{\otimes 2k-1}.$$
Notice that $(\bar{s}_{2}^{\otimes k-1}\otimes 1)(y)=(sgn(\sigma_{2})\cdots sgn(\sigma_{k}))(\bar{s}_{2}^{\otimes k-1}\otimes 1)(x)$
when $x_{1}$ and $x_{2}$ have odd degree, and
$(\hat{s}_{2}^{\otimes k-1}\otimes 1)(y)=(sgn(\sigma_{2})\cdots sgn(\sigma_{k}))(\hat{s}_{2}^{\otimes k-1}\otimes 1)(x)$
when $x_{1}$ and $x_{2}$ have even degree.
Since $(sgn(\sigma_{2})\cdots sgn(\sigma_{k}))=\pm 1$, it is sufficient we prove that the proposition holds for $x$ in place of
$y$. In this case, for any collection $\bar{\sigma}_{1},...,\bar{\sigma}_{k-1}\in S_{2}$ we shall write 
$((x_{\bar{\sigma}_{k-1}(1)}\otimes x_{\bar{\sigma}_{k-1}(2)})\tcdots (x_{\bar{\sigma}_{1}(1)}\otimes x_{\bar{\sigma}_{1}(2)}))\otimes x_{1}$
as $(\bar{\sigma}_{k-1}\tcdots\bar{\sigma}_{1}\otimes 1)(x)$ for short. 

Assume $x_{1}$ and $x_{2}$ have odd degree. Then
\begin{align}
&(\bar{s}_{2}^{\otimes k-1}\otimes 1)(x)\label{piie1}\\
&=\csum{\bar{\sigma}_{1},...,\bar{\sigma}_{k-1}\in S_{2}}{}
{(sgn(\bar{\sigma}_{1})\cdots sgn(\bar{\sigma}_{k-1}))(\bar{\sigma}_{k-1}\tcdots\bar{\sigma}_{1}\otimes 1)(x)}\notag\\
&=\csum{n=0}{k-1}{\bracket{\csummulti{\bar{\sigma}_{1},...,\bar{\sigma}_{k-1}\in S_{2},}
{\bar{\sigma}_{i}=(21)\mbox{ \scriptsize for }n\mbox{ \scriptsize choices of }i}{}
{(-1)^{n}(\bar{\sigma}_{k-1}\tcdots\bar{\sigma}_{1}\otimes 1)(x)}}}\notag.
\end{align} 
Also, by part (i) of Lemma~(\ref{l5}), 
\begin{align*}
&(\bar{s}_{2}^{\otimes k-1}\otimes 1)\circ\beta_{2k-1}((\bar{\sigma}_{k-1}\tcdots\bar{\sigma}_{1}\otimes 1)(x))\\
&=(-1)^{k-1}(-2)^{n}(\bar{s}_{2}(x_{1}\otimes x_{2}))^{\otimes k-1}\otimes x_{1}\\
&=(-1)^{k-1}(-2)^{n}(\bar{s}_{2}^{\otimes k-1}\otimes 1)(x)
\end{align*}
whenever $\bar{\sigma}_{i}=(21)$ for exactly $n$ choices of $i$. Since each $\bar{\sigma}_{i}$ can either be $(12)$ or $(21)$,
there are $\binom{k-1}{n}$ choices of $\bar{\sigma}_{1},...,\bar{\sigma}_{k-1}\in S_{2}$ with the property that
$\bar{\sigma}_{i}=(21)$ for exactly $n$ choices of $i$. Therefore by equation~(\ref{piie1})
\begin{align*}
&(\bar{s}_{2}^{\otimes k-1}\otimes 1)\circ\beta_{2k-1}\circ(\bar{s}_{2}^{\otimes k-1}\otimes 1)(x)\\
&=(-1)^{k-1}\csum{n=0}{k-1}{\bracket{\csummulti{\bar{\sigma}_{1},...,\bar{\sigma}_{k-1}\in S_{2},}
{\bar{\sigma}_{i}=(21)\mbox{ \scriptsize for }n\mbox{ \scriptsize choices of }i}{}
{(2^{n})(\bar{s}_{2}^{\otimes k-1}\otimes 1)(x)}}}\\
&=(-1)^{k-1}\bracket{\csum{n=0}{k-1}{\binom{k-1}{n}(2^{n})}}(\bar{s}_{2}^{\otimes k-1}\otimes 1)(x)\\
&=(-1)^{k-1}3^{k-1}(\bar{s}_{2}^{\otimes k-1}\otimes 1)(x),
\end{align*}
where the last equality follows by the binomial formula.

On the other hand, assume $x_{1}$ and $x_{2}$ have even degree. Then 
\begin{align*}
(\hat{s}_{2}^{\otimes k-1}\otimes 1)(x)
&=\csum{m=0}{k-1}{\bracket{\csummulti{\bar{\sigma}_{1},...,\bar{\sigma}_{k-1}\in S_{2},}
{\bar{\sigma}_{i}=(21)\mbox{ \scriptsize for }m\mbox{ \scriptsize choices of }i}{}
{(-1)^{m}(\bar{\sigma}_{k-1}\tcdots\bar{\sigma}_{1}\otimes 1)(x)}}}.
\end{align*} 
By part (ii) of Lemma~(\ref{l5}), if $\bar{\sigma}_{2i}=(21)$ for some $i$, 
$(\hat{s}_{2}^{\otimes k-1}\otimes 1)\circ\beta_{2k-1}((\bar{\sigma}_{k-1}\tcdots\bar{\sigma}_{1}\otimes 1)(x))=0.$
Otherwise if $\bar{\sigma}_{2i+1}=(21)$ for exactly $m$ choices of $i$, then we have
\begin{align*}
&(\hat{s}_{2}^{\otimes k-1}\otimes 1)\circ\beta_{2k-1}((\bar{\sigma}_{k-1}\tcdots\bar{\sigma}_{1}\otimes 1)(x))\\
&=(-1)^{k-1}(-2)^{m}(3^{\floor{\frac{k}{2}}})(\hat{s}_{2}^{\otimes k-1}\otimes 1)(x).
\end{align*}
So because there are $\binom{\floor{\frac{k-1}{2}}}{m}$ choices of $\bar{\sigma}_{1},...,\bar{\sigma}_{k-1}\in S_{2}$ with 
the property that $\bar{\sigma}_{2i+1}=(21)$ for exactly $m$ choices of $i$, and $\bar{\sigma}_{2i}\neq(21)$ for each $i$,
then
\begin{align*}
&(\hat{s}_{2}^{\otimes k-1}\otimes 1)\circ\beta_{2k-1}\circ(\hat{s}_{2}^{\otimes k-1}\otimes 1)(x)\\
&=(-1)^{k-1}(3^{\floor{\frac{k}{2}}})\bracket{\csum{m=0}{\floor{\frac{k-1}{2}}}
{\binom{\floor{\frac{k-1}{2}}}{m}(2^{m})}}(\hat{s}_{2}^{\otimes k-1}\otimes 1)(x)\\
&=(-1)^{k-1}(3^{\floor{\frac{k}{2}}})(3^{\floor{\frac{k-1}{2}}})(\hat{s}_{2}^{\otimes k-1}\otimes 1)(x)\\
&=(-1)^{k-1}(3^{k-1})(\hat{s}_{2}^{\otimes k-1}\otimes 1)(x),
\end{align*}
where again the last equalities follow using the binomial formula.
\end{proof}


\section{Geometrically realizing Theorem~(\ref{T1})}
\label{S4}

In this section we fix $X$ to be the $p$-localization of a suspended $CW$-complex of finite type. 
Denote the reduced $\zmodp$-homology of $X$ by $V$. 
The $k$-fold self-smash of $X$ is written as $X^{(k)}$, 
is also the $p$-localization of a suspended $CW$-complex of finite type, 
and its reduced mod-$p$ homology given by 
\[
\rhlgy{*}{X^{(k)}}\cong V^{\otimes k}.
\]

Fix an element $\sigma\in\zmodp[S_{k}]$.
The action of $\zmodp[S_{k}]$ on $V^{\otimes k}$ induces the self-map 
\seqm{V^{\otimes k}}{\sigma}{V^{\otimes k}}. 
One constructs a self-map 
\seqm{X^{(k)}}{f_{\sigma}}{X^{(k)}} 
with property that $(f_{\sigma})_{*}$ induces 
\seqm{V^{\otimes k}}{\sigma}{V^{\otimes k}} on mod-$p$ homology as follows.
Write $\sigma=c_1\sigma_1+\cdots c_j\sigma_j$ for $\sigma_i\in S_{k}$ and each
$c_i$ a nonzero element in $\zmodp$.
We have the canonical action of $S_k$ on $X^{(k)}$ given by permuting its factors.
Thus each element $\sigma_i$ defines a map \seqm{X^{(k)}}{f_{\sigma_i}}{X^{(k)}}.
Now define $f_{\sigma}$ as the composite
$$
f_{\sigma}\colon\seqmmm{X^{(k)}}{pinch}{\cwedge{i=1}{j}{X^{(k)}}}{\vee_i (f_{\sigma_i}\circ c_i)}{\cwedge{i=1}{j}{X^{(k)}}}{fold}{X^{(k)}}, 
$$  
where the pinch map is defined via the $co$-$H$-space structure on $X^{(k)}$ 
(since $X$ is a suspension, so is $X^{(k)}$), and \seqm{X^{(k)}}{c_i}{X^{(k)}} is the degree $c_i$ map.

We will denote the mapping telescope
\[
\seqmm{X^{(k)}}{f_{\sigma}}{X^{(k)}}{f_{\sigma}}{\cdots}
\]  
by $T(f_{\sigma})$. 
If $\sigma\in\zmodp[S_{k}]$ happens to be an idempotent, 
$(1-\sigma)\in\zmodp[S_{k}]$ is also an idempotent orthogonal to $\sigma$.
We have isomorphisms 
\[
\rhlgy{*}{T(f_{\sigma})}\simeq Im(\sigma\colon\seqm{V^{\otimes k}}{}{V^{\otimes k}})
\]
\[
\rhlgy{*}{T(f_{1-\sigma})}\simeq Im((1-\sigma)\colon\seqm{V^{\otimes k}}{}{V^{\otimes k}}),
\]
and the canonical inclusions 
\[
\seqm{X^{(k)}}{\iota_1}{T(f_{\sigma})}
\]
\[
\seqm{X^{(k)}}{\iota_2}{T(f_{1-\sigma})} 
\]
induce the respective projections maps on mod-$p$ homology. Then the composite 
\[
\seqmm{X^{(k)}}{pinch}{X^{(k)}\vee X^{(k)}}{\iota_1\vee\iota_2}{T(f_{\sigma})\vee T(f_{1-\sigma})}
\]
induces an isomorphism on mod-$p$ homology. 
Since $X^{(k)}$ is the $p$-localization of a finite type
$CW$-complex, this composite is a homotopy equivalence.
In particular, the inclusion $\iota_1$ is a homotopy retraction, 
and its right homotopy inverse \seqm{T(f_{\sigma})}{\kappa}{X^{(k)}} can be chosen so that the composite 
\seqmm{X^{(k)}}{\iota}{T(f_{\sigma})}{\kappa}{{X^{(k)}}} is homotopic to $f_{\sigma}$.

It is well known that $\beta_{k}\beta_{k}=k\beta_{k}\in\zmodp[S_{k}]$~\cite{Specht,Wever}, 
and as such $\frac{1}{k}{\beta_{k}}\in\zmodp[S_{k}]$ is an idempotent whenever $k$ is prime to $p$. 
By taking mapping telescopes one has $T(f_{\beta_{k}})$ a homotopy retract of $X^{(k)}$ when $k$ is prime to $p$. 
We shall denote $T(f_{\beta_{k}})$ by $L_{k}(X)$. The 
mod-$p$ homology of $L_{k}(X)$ is the image of \seqm{V^{\otimes k}}{\beta_{k}}{V^{\otimes k}}, so  
it is the submodule of length $k$ Lie brackets in $V^{\otimes k}$, and is denoted by $L_{k}(V)$. 
These spaces $L_{k}(X)$ turn out to have some significance, 
as is apparent in the following homotopy decomposition (see~\cite{Wu2}).

\begin{theorem}
\label{Jie}
Let $1<k_{1}<k_{2}<\cdots$ be any sequence satisfying the
following properties:
\begin{enumerate}
\item $k_{i}$ is prime to $p$;
\item $k_{i}$ is not a multiple of $k_{j}$ whenever $i>j$.
\end{enumerate}
Then there exists a homotopy decomposition
\[\Omega\Sigma X\simeq \cprod{j}{}{\Omega\Sigma L_{k_{j}}(X)}\times(\mbox{Some other space}).\]
$\qqed$
\end{theorem}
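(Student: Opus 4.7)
The plan is to realize geometrically, via $k$-th James--Hopf maps paired with iterated Samelson products, the PBW inclusion of the factors $\ctimes{j}{}{S(L_{k_{j}}(V))}$ into $\hlgy{*}{\Omega\Sigma X}\cong T(V)$. For each $j$ I would build a retraction
\[\seqm{\Omega\Sigma X}{r_{j}}{\Omega\Sigma L_{k_{j}}(X)}\]
by composing the $k_{j}$-th James--Hopf map $\seqm{\Omega\Sigma X}{H_{k_{j}}}{\Omega\Sigma X^{(k_{j})}}$ with the loop of the canonical projection $X^{(k_{j})}\to L_{k_{j}}(X)$ cut out by the $p$-local idempotent $\frac{1}{k_{j}}\beta_{k_{j}}\in\plocal[S_{k_{j}}]$, which is honestly idempotent because $k_{j}$ is prime to $p$ (this is exactly the idempotent used to define $L_{k_{j}}(X)$ in the paragraph preceding the theorem). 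For a section
\[\seqm{\Omega\Sigma L_{k_{j}}(X)}{s_{j}}{\Omega\Sigma X},\]
I would start from the right inverse $\kappa_{k_{j}}:L_{k_{j}}(X)\to X^{(k_{j})}$ furnished by that same paragraph, post-compose with the iterated Samelson bracket map $X^{(k_{j})}\to\Omega\Sigma X$ (whose image on $\hlgy{*}{-}$ inside $T(V)$ is precisely the action of $\beta_{k_{j}}$), and then extend multiplicatively through the James construction $\Omega\Sigma L_{k_{j}}(X)\simeq J(L_{k_{j}}(X))$.

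The decomposition would then be produced by loop-multiplying the $s_{j}$ into a single map
\[\seqm{\cprod{j}{}{\Omega\Sigma L_{k_{j}}(X)}}{\Psi}{\Omega\Sigma X}\]
and arguing $\Psi$ is a homotopy retraction. Since everything is $p$-locally of finite type, this reduces to checking split injectivity of $\Psi_{*}$ on mod-$p$ homology. Under the PBW identification $\hlgy{*}{\Omega\Sigma X}\cong\ctimes{i=1}{\infty}{S(L_{i}(V))}$, the composites $r_{i}\circ s_{j}$ assemble into a matrix of self-maps of $\cplus{j}{}{\hlgy{*}{\Omega\Sigma L_{k_{j}}(X)}}$; the diagonal entries $r_{j}\circ s_{j}$ restrict, on the primitive subspace $L_{k_{j}}(V)$, to multiplication by $k_{j}$ (via the relation $\beta_{k_{j}}^{2}=k_{j}\beta_{k_{j}}$), which is invertible at $p$. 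Standard $H$-space recognition would then upgrade this to the factor decomposition, with the complementary piece realized as the homotopy fibre or mapping telescope of $\Psi$.

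The main obstacle is the verification that the off-diagonal entries $r_{i}\circ s_{j}$ for $i\neq j$ do not obstruct invertibility of this matrix. The relevant computation is as follows: on an $n$-fold product of length-$k_{j}$ iterated brackets (total tensor degree $nk_{j}$ in $T(V)$), $(H_{k_{i}})_{*}$ extracts a signed sum over length-$k_{i}$ sub-tensors, which are then symmetrized by $\beta_{k_{i}}$ and projected onto $L_{k_{i}}(V)$. The twin hypotheses that $k_{i}$ is prime to $p$ and, crucially, that $k_{i}$ is not a multiple of $k_{j}$ for $i>j$ are precisely calibrated to force these extractions to vanish on the associated graded of the length filtration: no choice of $k_{i}$ indices from $nk_{j}$ can exhaust a whole number of complete $k_{j}$-brackets, and the resulting partial brackets collapse under Dynkin--Specht--Wever symmetrization. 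This combinatorial analysis, a symmetric-group calculation in the spirit of Section~(\ref{S1}), is the heart of the argument; once it is in place, the usual retract-recognition principle for loop $H$-spaces at a prime supplies the decomposition.
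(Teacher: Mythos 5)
The paper does not actually prove this statement: Theorem~(\ref{Jie}) is quoted from~\cite{Wu2} (note the $\square$ terminating the statement itself), so there is no in-paper argument to measure yours against. That said, your outline --- James--Hopf maps composed with the idempotent projection cut out by $\frac{1}{k_j}\beta_{k_j}$ as retractions, iterated Samelson products precomposed with $\kappa_{k_j}$ and extended over the James construction as sections, then an invertibility-of-a-matrix argument on mod-$p$ homology --- is exactly the strategy the introduction attributes to~\cite{Wu2}, and your diagonal computation (invertibility via $\beta_{k_j}^{2}=k_j\beta_{k_j}$ with $k_j$ prime to $p$) identifies the correct reason the diagonal entries are equivalences.

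As a proof, however, two essential steps are asserted rather than established. First, the claim that the iterated Samelson product followed by the $k$-th James--Hopf invariant induces $\beta_k$ on the appropriate piece of $T(V)$ is itself a substantive theorem (a geometric Dynkin--Specht--Wever statement requiring the combinatorial James--Hopf calculus of~\cite{Wu2}); it cannot be taken as read, and James--Hopf maps are not loop maps, so their interaction with the multiplicative extension of your sections is delicate. Second, your explanation of the role of hypothesis (2) is not the right mechanism. Since $L_i(L_{k_j}(V))\subseteq L_{ik_j}(V)$, the image of $(s_j)_*$ is $\bigotimes_i S(L_i(L_{k_j}(V)))$ sitting inside $T(V)\cong\bigotimes_m S(L_m(V))$ in tensor degrees that are multiples of $k_j$; if some $k_i$ equalled $mk_j$, the factor $L_{k_i}(V)$ would collide with $L_m(L_{k_j}(V))$ and the product could not possibly split off, which is what hypothesis (2) rules out. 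Your heuristic about ``$k_i$ indices failing to exhaust complete $k_j$-brackets'' does not engage with this, and the sufficiency direction --- that under (2) the full matrix $(r_i\circ s_j)$ is invertible on all of $\bigoplus_j T(L_{k_j}(V))$, not merely on the generating primitives $L_{k_j}(V)$ --- is the heart of the matter and is missing. The assembly of infinitely many factors into a single equivalence is standard (connectivity of $L_{k_j}(X)$ tends to infinity and everything is $p$-local of finite type), so that part is fine.
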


The $L_{k}(X)$ are therefore of direct interest to homotopy theory of $\Omega\Sigma X$.
It would be ideal to split these into more familiar spaces. 
One could start by searching for splittings of $X^{(k)}$, 
and then use the fact that $L_{k}(X)$ is a homotopy retract of $X^{(k)}$. 
A comprehensive look at the finest possible $2$-primary splittings of 
$X^{(k)}$ for $X$ a $2$-cell complex can be found in~\cite{WS3}. 

The following proposition gives a criteria for the existence of a
certain homotopy retract of $L_{n\ell+1}(X)$.

\begin{proposition}
\label{p3}
Fix $n>0$ and $\ell>1$ such that $n\ell+1$ is prime to $p$, and take the integers 
$c_{n,\ell}$ and $d_{n,\ell}$ in Theorem~(\ref{T1}). 

Suppose $\dim V=\ell>1$ (where $V$ denotes \rhlgy{*}{X}). 
Let $M$ denote the sum of the degrees of the generators of $V$.
If $c_{n,\ell}$ is prime to $p$ and $V_{odd}=0$, or $d_{n,\ell}$ is prime to $p$ and $V_{even}=0$,
then
\begin{romanlist}
\item there exist a space $Y$ that is a homotopy retract of 
$L_{n\ell+1}(X)$, and $\rhlgy{*}{Y}\cong\rhlgy{*}{\Sigma^{nM}X}$; 
\item if $\ell\leq p-1$, then $\Sigma^{nM}X$ is a homotopy retract of 
$L_{n\ell+1}(X)$.
\end{romanlist}
\end{proposition}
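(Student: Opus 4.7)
The plan is to construct, using Theorem~(\ref{T1}), an idempotent $\tau$ on $V^{\otimes(n\ell+1)}$ whose image realizes the desired homology and which ``sits under'' the idempotent $\sigma_{1}=\tfrac{1}{n\ell+1}\beta_{n\ell+1}$ defining $L_{n\ell+1}(X)$. Treating only the $V_{odd}=0$ case (the $V_{even}=0$ case being formally identical with $\bar{s}_\ell$ and $d_{n,\ell}$ in place of $\hat{s}_\ell$ and $c_{n,\ell}$), set $e_{1}=\beta_{n\ell+1}$, $e_{2}=\hat{s}_{\ell}^{\otimes n}\otimes 1$, and
\[
\tau=\frac{\epsilon}{c_{n,\ell}}\,e_{2}e_{1}\in\zmodp[S_{n\ell+1}],
\]
where $\epsilon=\pm 1$ matches the sign in Theorem~(\ref{T1}). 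Using $e_{2}e_{1}e_{2}=\epsilon\,c_{n,\ell}\,e_{2}$ together with $e_{1}^{2}=(n\ell+1)e_{1}$, one checks $\tau^{2}=\tau$ and $\tau\sigma_{1}\tau=\tau$ as operators on $V^{\otimes(n\ell+1)}$. Moreover, for $y=e_{2}z\in\mathrm{Im}(e_{2})$ one has $\tau y=\tfrac{\epsilon}{c_{n,\ell}}e_{2}e_{1}e_{2}z=y$, so $\mathrm{Im}(\tau)=\mathrm{Im}(e_{2})=(\hat{s}_{\ell}V^{\otimes\ell})^{\otimes n}\otimes V$; this is $\ell$-dimensional in degrees shifted by $nM$, agreeing with $\rhlgy{*}{\Sigma^{nM}X}$.

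For part (i), set $Y:=T(f_{\tau})$. The mapping telescope construction recalled at the start of Section~(\ref{S4}) yields $Y$ as a retract of $X^{(n\ell+1)}$ with $\rhlgy{*}{Y}\cong\mathrm{Im}(\tau)$. To promote this to a retract of $L_{n\ell+1}(X)=T(f_{\sigma_{1}})$, I would form the composite
\[
Y\stackrel{\kappa_{\tau}}{\longrightarrow}X^{(n\ell+1)}\stackrel{\iota_{\sigma_{1}}}{\longrightarrow}L_{n\ell+1}(X)\stackrel{\kappa_{\sigma_{1}}}{\longrightarrow}X^{(n\ell+1)}\stackrel{\iota_{\tau}}{\longrightarrow}Y,
\]
where the $\iota_{*},\kappa_{*}$ are the retraction/section pairs from the telescope construction. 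On mod-$p$ homology this composite takes $y\in\mathrm{Im}(\tau)$ to $\tau\sigma_{1}\tau y=\tau y=y$, i.e., it is the identity. Since $Y$ is a simply connected $p$-local $CW$-complex of finite type, Whitehead's theorem makes the composite a homotopy equivalence, so $Y$ is a retract of $L_{n\ell+1}(X)$.

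For part (ii), the extra assumption $\ell\leq p-1$ makes $\ell!$ invertible in $\zmodp$, so $e_{0}:=\tfrac{1}{\ell!}\hat{s}_{\ell}$ is an honest idempotent in $\zmodp[S_{\ell}]$. The resulting retract $T(f_{e_{0}})$ of $X^{(\ell)}$ has mod-$p$ homology concentrated in degree $M$; a routine Hurewicz--Whitehead argument for a $p$-local finite type $CW$-complex with a single mod-$p$ cell gives $T(f_{e_{0}})\simeq S^{M}_{(p)}$. Smashing $n$ copies with $X$ then realizes $\Sigma^{nM}X$ as a retract of $X^{(n\ell+1)}=(X^{(\ell)})^{(n)}\wedge X$, whose image in mod-$p$ homology is $\mathrm{Im}(e_{0}^{\otimes n}\otimes 1)=\mathrm{Im}(e_{2})=\mathrm{Im}(\tau)$. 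Composing the inclusion of this retract into $X^{(n\ell+1)}$ with $\iota_{\tau}\colon X^{(n\ell+1)}\to Y$ produces a map $\Sigma^{nM}X\to Y$ that is the identity on mod-$p$ homology, hence a homotopy equivalence by Whitehead. Combined with part (i), $\Sigma^{nM}X\simeq Y$ is a retract of $L_{n\ell+1}(X)$.

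The main obstacle is that Theorem~(\ref{T1}) supplies the relation $e_{2}e_{1}e_{2}=\pm c_{n,\ell}\,e_{2}$ only as an identity of operators on $V^{\otimes(n\ell+1)}$, not in the group ring $\zmodp[S_{n\ell+1}]$. One must therefore invoke idempotency of $\tau$ only through its mod-$p$ homology action, passing from homological identities to genuine homotopy retractions via Whitehead's theorem. The secondary subtlety is the identification of the smashed retract $T(f_{e_{0}})^{(n)}\wedge X$ with $\Sigma^{nM}X$ in part (ii), which uses both the $p$-local sphere recognition and the compatibility of mapping telescopes with smash products.
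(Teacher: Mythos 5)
Your proposal is correct and takes essentially the same route as the paper: the paper forms the telescope of the (un-normalized) composite $f_{\beta_{n\ell+1}}\circ(f_{\hat{s}_{\ell}}^{(n)}\wedge\mathbbm{1})$, uses Theorem~(\ref{T1}) to get $g_{*}\circ g_{*}=c_{n,\ell}\,g_{*}$ with $\mathrm{Im}(g_{*})=\mathrm{Im}(\hat{s}_{\ell}^{\otimes n}\otimes 1)\cong\Sigma^{nM}V$, splits $X^{(n\ell+1)}$ using the complementary map $\underline{c}-g$, and compares with $L_{n\ell+1}(X)$ by exactly the four-fold composite you write down, while part (ii) is likewise handled via the idempotent $\tfrac{1}{\ell!}\hat{s}_{\ell}$, the identification $T(f_{\hat{s}_{\ell}})\simeq S^{M}$, and smashing with $\mathbbm{1}_{X}$. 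Your normalization into honest idempotent operators $\tau$ and $\sigma_{1}$ (with $\tau\sigma_{1}\tau=\tau$) is only a cosmetic repackaging of the paper's computation, and your caveat that these are idempotents only as operators on $V^{\otimes(n\ell+1)}$ rather than in $\zmodp[S_{n\ell+1}]$ is handled the same way in the paper, through the mod-$p$ homology action and the $p$-local Whitehead theorem.
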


\begin{proof}[Proof of part (i)]
Recall the elements $\bar{s}_{\ell},\hat{s}_{\ell}\in \zmodp[S_{\ell}]$ defined in Section~(\ref{S1}). If $V_{even}=0$, 
let $s_{\ell}=\bar{s}_{\ell}$ and assume $c=c_{n,\ell}$ is prime to $p$. Otherwise if 
$V_{odd}=0$, let $s_{\ell}=\hat{s}_{\ell}$ and assume $c=d_{n,\ell}$ is prime to $p$. 
We have self-maps \seqm{X^{(\ell)}}{f_{s_{\ell}}}{X^{(\ell)}} and \seqm{X^{(n\ell+1)}}{f_{\beta_{n\ell+1}}}{X^{(n\ell+1)}} inducing 
\seqm{V^{\otimes\ell}}{s_{\ell}}{V^{\otimes\ell}} and \seqm{V^{\otimes(n\ell+1)}}{\beta_{n\ell+1}}
{V^{\otimes (n\ell+1)}} 
on mod-$p$ homology. Consider the composite
\[g\colon\seqmm{X^{(n\ell+1)}}{f_{s_{\ell}}^{(n)}\wedge\mathbbm{1}}{X^{(n\ell+1)}}{f_{\beta_{n\ell+1}}}{X^{(n\ell+1)}},\]
where $\mathbbm{1}$ is the identity map on $X$, and $f_{s_{\ell}}^{(n)}$ is the $n$-fold self-smash of $f_{s_{\ell}}$.
On mod-$p$ homology $g$ induces 
\[g_{*}\colon\seqmm{V^{\otimes(n\ell+1)}}{s_{\ell}^{\otimes n}\otimes 1}{V^{\otimes (n\ell+1)}}{\beta_{n\ell+1}}{V^{\otimes (n\ell+1)}}.\]
Let $T(g)$ be the telescope of $g$. By Theorem~(\ref{T1}), 
\begin{equation}
\label{T3e1}
(s_{\ell}^{\otimes n}\otimes 1)\circ\beta_{n\ell+1}\circ(s_{\ell}^{\otimes n}\otimes 1)
=c(s_{\ell}^{\otimes n}\otimes 1).
\end{equation}
Thus $g_{*}\circ g_{*}=c(g_{*})$. Since $c$ is prime to $p$, this implies $\rhlgy{*}{T(g)}\cong Im(g_{*})$. Notice 
$Im(g_{*})\subseteq Im(s_{\ell}^{\otimes n}\otimes 1)$, and $Im((s_{\ell}^{\otimes n}\otimes 1)\circ g_{*})
\subseteq Im(g_{*}\circ g_{*})= Im(g_{*})$.
By equation~(\ref{T3e1}),  
$Im((s_{\ell}^{\otimes n}\otimes 1)\circ g_{*})=Im(c(s_{\ell}^{\otimes n}\otimes 1))
=Im(s_{\ell}^{\otimes n}\otimes 1)$. Therefore $Im(g_{*})=Im(s_{\ell}^{\otimes n}\otimes 1)$. Also, 
$Im(s_{\ell}^{\otimes n})$ is 
a submodule of $V^{\otimes n\ell}$ with dimension $1$, whose single generator has degree $nM$. So
$Im(s_{\ell}^{\otimes n}\otimes 1)\cong \Sigma^{nM}V$ as graded $\zmodp$-modules, where $\Sigma^{nM}V$ is 
the $nM$-fold suspension of the graded $\zmodp$-module $V$. Hence 
\[\rhlgy{*}{T(g)}\cong Im(g_{*})\cong \Sigma^{nM}V\cong\rhlgy{*}{\Sigma^{nM}X}.\]

Let us also consider the composite
\[h=(\underline{c}-g)\colon\seqmmm{X^{(n\ell+1)}}{\psi}{X^{(n\ell+1)}\vee X^{(n\ell+1)}}
{\underline{c}\vee-g}{X^{(n\ell+1)}\vee X^{(n\ell+1)}}{\triangledown}{X^{(n\ell+1)}},\]
where $\psi$ is the pinch map, $\underline{c}$ is the degree $c$ map on $X^{(n\ell+1)}$, $-g$ is the composite
of $g$ and the degree $-1$ map on $X^{(n\ell+1)}$, and $\triangledown$ is the fold map.
On mod-$p$ homology we have $h_{*}=c-g_{*}$. Since $c$ is prime to $p$, $Im(g_{*})+Im(h_{*})=V^{\otimes (n\ell+1)}$. 
But $g_{*}\circ g_{*}=c(g_{*})$, so $Im(g_{*})\cap Im(h_{*})=0$.
Therefore $V^{\otimes (n\ell+1)}$ splits as a sum of $\zmodp$-submodules $Im(g_{*})\oplus Im(h_{*})$.  
Notice that 
\[h_{*}\circ h_{*}=(c-g_{*})\circ (c-g_{*})=c^{2}-2c(g_{*})+(g_{*}\circ g_{*})=c^{2}-2c(g_{*})+c(g_{*})=c(h_{*}),\] 
so taking the telescope $T(h)$, we have $\rhlgy{*}{T(h)}\cong Im(h_{*})$. 
Thus we have the following splitting of graded $\zmodp$-modules,
\[\rhlgy{*}{X^{(n\ell+1)}}= V^{\otimes (n\ell+1)}= Im(g_{*})\oplus Im(h_{*})
\cong\rhlgy{*}{T(g)}\oplus \rhlgy{*}{T(h)}.\]
As the inclusions \seqm{X^{(n\ell+1)}}{\iota_{g}}{T(g)} and \seqm{X^{(n\ell+1)}}{\iota_{h}}{T(h)} induce projections 
of $Im(g_{*})$ and $Im(h_{*})$ isomorphically onto 
\rhlgy{*}{T(g)} and \rhlgy{*}{T(h)} in mod-$p$ homology, the map 
\[f\colon\seqmm{X^{(n\ell+1)}}{\psi}{X^{(n\ell+1)}\vee X^{(n\ell+1)}}{\iota_{g}\vee \iota_{h}}{T(g)\vee T(h)}\]  
induces an isomorphism on mod-$p$ homology. Since $X^{(n\ell+1)}$ is the $p$-localization of a finite
type $CW$-complex, $f$ is a homotopy equivalence. 

Let $f^{-1}\colon\seqm{T(g)\vee T(h)}{}{X^{(n\ell+1)}}$ denote the
inverse homotopy equivalence of $f$. Since the composite $f\circ f^{-1}\colon\seqm{X^{(n\ell+1)}}{}{X^{(n\ell+1)}}$ is 
homotopic to the identity, and \seqm{X^{(n\ell+1)}}{\iota_{g}}{T(g)} induces a map of $Im(g_{*})$
isomorphically onto \rhlgy{*}{T(g)}, the composite 
\[\kappa_{g}\colon\seqmm{T(g)}{}{T(g)\vee T(h)}{f^{-1}}{X^{(n\ell+1)}}\] 
maps \rhlgy{*}{T(g)} isomorphically
onto $Im(g_{*})$ in mod-$p$ homology. Also, since $n\ell+1$ is prime to $p$ and 
$\beta_{n\ell+1}\circ\beta_{n\ell+1}=(n\ell+1)\beta_{n\ell+1}$,
$\frac{1}{n\ell+1}\beta_{n\ell+1}\in\zmodp[S_{n\ell+1}]$ is an idempotent. So the inclusion 
\seqm{X^{(n\ell+1)}}{\iota}{T(f_{\beta_{n\ell+1}})=L_{n\ell+1}(X)} is a homotopy retraction, and we 
can take some left homotopy inverse $\kappa$ such that
\seqmm{X^{(n\ell+1)}}{\iota}{L_{n\ell+1}(X)}{\kappa}{X^{(n\ell+1)}}
is homotopic to \seqm{X^{(n\ell+1)}}{f_{\beta_{n\ell+1}}}{X^{(n\ell+1)}}. Now consider the composite
\[\alpha\colon\seqmmmm{T(g)}{\kappa_{g}}{X^{(n\ell+1)}}{\iota}{L_{n\ell+1}(X)}{\kappa}{X^{(n\ell+1)}}{\iota_{g}}{T(g)}.\]
Recall $g_{*}=\beta_{n\ell+1}\circ(s_{\ell}^{\otimes n}\otimes 1)$ by definition. Then on mod-$p$ homology $\kappa_{*}\circ\iota_{*}$ 
sends $Im(g_{*})$ surjectively onto 
$\beta_{n\ell+1}(Im(g_{*}))=Im(\beta_{n\ell+1}\circ\beta_{n\ell+1}\circ(s_{\ell}^{\otimes n}\otimes 1))
=Im((n\ell+1)\beta_{n\ell+1}\circ(s_{\ell}^{\otimes n}\otimes 1))=Im(g_{*}).$
Since $(\iota_{g})_{*}$ projects $Im(g_{*})$ isomorphically onto \rhlgy{*}{T(g)}, 
and $\kappa_{g}$ maps \rhlgy{*}{T(g)} isomorphically onto $Im(g_{*})$, $\alpha_{*}$ is an isomorphism
on mod-$p$ homology. 
Since $T(g)$ is a summand in the above splitting of $X^{(n\ell+1)}$, 
which is the $p$-localization of a finite type $CW$-complex, 
$\alpha$ must be a homotopy equivalence, and so $T(g)$ is a homotopy retract of $L_{n\ell+1}(X)$. 
\end{proof}

\begin{proof}[Proof of part (ii)]
We continue where the proof of part (i) left off to avoid redefining things. 
This time we assume $\ell\leq p-1$. 
Recall $s_{\ell}\in\zmodp[S_{\ell}]$ is either $\bar{s}_{\ell}$, or $\hat{s}_{\ell}$, depending on whether
$V_{even}=0$ or $V_{odd}=0$. In either case it is well known (and not difficult to see) that
\[
s_{\ell}s_{\ell}=\ell!s_{\ell},
\]
so one can take the idempotent $\frac{1}{\ell!}s_{\ell}$ when $\ell\leq p-1$. 
The inclusion \seqm{X^{(\ell)}}{\bar{\iota}}{T(f_{s_{\ell}})} 
is therefore a homotopy retraction, 
and since $\rhlgy{*}{T(f_{s_{\ell}})}\cong Im(s_{\ell})$ is a $1$-dimensional submodule of 
$V^{\otimes n\ell}$ whose generator has degree $M$, 
$T(f_{s_{\ell}})$ is homotopy equivalent to the $M$-sphere $S^{M}$.

Let $\gamma\colon\seqm{X^{(n\ell)}}{\bar{\iota}^{(n)}}{S^{nM}}$ be the $n$-fold smash of $\bar{\iota}$.
On mod-$p$ homology $\gamma$ induces an isomorphism onto $Im(s_{\ell}^{\otimes n})$, 
and so the smash of $\gamma$ and the identity on $X$, 
\[\gamma\wedge\mathbbm{1}\colon\seqm{X^{(n\ell+1)}}{}{\Sigma^{nM}X},\] 
induces an isomorphism onto $Im(s_{\ell}^{\otimes n}\otimes 1)$. Since the section map \seqm{T(g)}{\kappa_{g}}{X^{(n\ell+1)}} 
defined in the proof of part (i) induces an isomorphism onto $Im(s_{\ell}^{\otimes n}\otimes 1)$ on mod-$p$ homology, 
the composite \seqmm{T(g)}{\kappa_{g}}{X^{(n\ell+1)}}{\gamma\wedge\mathbbm{1}}{\Sigma^{nM}X} is an isomorphism
on mod-$p$ homology, so it is a homotopy equivalence. Thus part (ii) follows from part (i).

\end{proof}

If the criteria in Proposition~(\ref{p3}) are satisfied, Proposition~(\ref{p3}) together with 
Theorem~(\ref{Jie}) imply $\Omega\Sigma Y$ is a homotopy retract of $\Omega\Sigma X$. 
Similarly, when $\ell\leq p-1$, $\Omega\Sigma^{nM+1}X$ is a homotopy retract of $\Omega\Sigma X$.
If both $d_{n,\ell}$ and $c_{n,\ell}$ are prime to $p$, one can iterate Proposition~(\ref{p3}).

\begin{proposition}
\label{p4}
Fix $n>0$ and $\ell>1$ such that $n\ell+1$ is prime to $p$, and 
suppose both $c_{n,\ell}$ and $d_{n,\ell}$ are prime to $p$. 

Let $X$ be any suspended $p$-local $CW$-complex with $\dim V=\ell>1$ (where $V$ denotes \rhlgy{*}{X}),
and either $V_{odd}=0$ or $V_{even}=0$. 
Let $M$ denote the sum of the degrees of the generators of $V$, and define the
sequence of integers $b_{i,n}$ recursively with $b_{0,n}=0$, and 
\[b_{i,n}=(n\ell+1)b_{i-1,n}+nM.\]
Then
\begin{romanlist}
\item there exist spaces $Y_{i}$ such that $\Omega\Sigma Y_{i}$ is a homotopy retract of 
$\Omega\Sigma X$, and $\rhlgy{*}{Y_{i}}\cong\rhlgy{*}{\Sigma^{b_{i,n}}X}$ for each $i\geq 1$; 
\item if $\ell\leq p-1$, then $\Omega\Sigma^{b_{i,n}+1}X$ is a homotopy retract of 
$\Omega\Sigma X$ for each $i\geq 1$.
\end{romanlist}
\end{proposition}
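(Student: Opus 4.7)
The plan is to prove both parts by induction on $i$, iterating Proposition~(\ref{p3}) at each stage. The base case $i=1$ is essentially the content of Proposition~(\ref{p3}): applying it to $X$ produces a space $Y_1$ (respectively $\Sigma^{b_{1,n}}X$ when $\ell\leq p-1$) that is a homotopy retract of $L_{n\ell+1}(X)$ and has the correct homology. Since $n\ell+1$ is prime to $p$, Theorem~(\ref{Jie}) applied with the singleton sequence $k_1=n\ell+1$ yields $\Omega\Sigma L_{n\ell+1}(X)$ as a homotopy retract of $\Omega\Sigma X$, so $\Omega\Sigma Y_1$ (respectively $\Omega\Sigma^{b_{1,n}+1}X$) is a homotopy retract of $\Omega\Sigma X$.

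For the inductive step in part (i), suppose $Y_{i-1}$ has already been constructed as a suspended $p$-local $CW$-complex with $\rhlgy{*}{Y_{i-1}}\cong\rhlgy{*}{\Sigma^{b_{i-1,n}}X}$, and with $\Omega\Sigma Y_{i-1}$ a homotopy retract of $\Omega\Sigma X$. Since $V$ satisfies either $V_{odd}=0$ or $V_{even}=0$, the same holds for $\rhlgy{*}{Y_{i-1}}\cong \Sigma^{b_{i-1,n}}V$; which of the two holds depends on the parity of $b_{i-1,n}$, and so in general the parity may flip between successive stages. This is precisely where we use that both $c_{n,\ell}$ and $d_{n,\ell}$ are prime to $p$: the hypothesis allows Proposition~(\ref{p3}) to be applied to $Y_{i-1}$ regardless of which parity condition holds. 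The resulting retract $Y_i$ of $L_{n\ell+1}(Y_{i-1})$ satisfies $\rhlgy{*}{Y_i}\cong\rhlgy{*}{\Sigma^{nM_{i-1}}Y_{i-1}}$, where $M_{i-1}=M+\ell b_{i-1,n}$ is the sum of the degrees of the generators of $\rhlgy{*}{Y_{i-1}}$, giving the degree shift
\[nM_{i-1}+b_{i-1,n}=nM+(n\ell+1)b_{i-1,n}=b_{i,n},\]
as required. Applying Theorem~(\ref{Jie}) to $Y_{i-1}$ and composing retractions then yields $\Omega\Sigma Y_i$ as a homotopy retract of $\Omega\Sigma Y_{i-1}$, and hence of $\Omega\Sigma X$.

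Part (ii) proceeds by exactly the same iteration but using part (ii) of Proposition~(\ref{p3}): when $\ell\leq p-1$, the retract at each stage may be taken to be the genuine suspension $\Sigma^{b_{i,n}}X$ itself, built now as a retract of $L_{n\ell+1}(\Sigma^{b_{i-1,n}}X)$. In this case Theorem~(\ref{Jie}) produces $\Omega\Sigma^{b_{i,n}+1}X$ as a homotopy retract of $\Omega\Sigma^{b_{i-1,n}+1}X$, closing the induction.

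The main technical point, without which the induction does not close, is that each $Y_i$ must be arranged to be a suspended $CW$-complex, since Proposition~(\ref{p3}) and Theorem~(\ref{Jie}) both require a suspension as input. Since retracts of suspensions are only co-$H$-spaces in general, this is not automatic. However, the retract $Y$ produced in Proposition~(\ref{p3}) is the mapping telescope $T(g)$ of a self-map of $Y_{i-1}^{(n\ell+1)}$ built entirely out of permutations of smash factors and co-$H$-space sums. Because $Y_{i-1}$ is inductively a suspension, these permutations act as suspension maps on $Y_{i-1}^{(n\ell+1)}$, making $g$ a suspension map; since suspension commutes with mapping telescopes, $T(g)$ is again a suspension. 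This propagates the suspension hypothesis along the induction and is the crux that makes the iteration legitimate.
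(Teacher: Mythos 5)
Your proof follows the paper's argument essentially verbatim: induction on $i$, applying Proposition~(\ref{p3}) to $\Sigma^{b_{i-1,n}}X$ (respectively $Y_{i-1}$) together with Theorem~(\ref{Jie}) at each stage, with the same degree bookkeeping $nM'+b_{i-1,n}=n(M+\ell b_{i-1,n})+b_{i-1,n}=b_{i,n}$ and the same use of the hypothesis that both $c_{n,\ell}$ and $d_{n,\ell}$ are prime to $p$ to cover the possible parity flip of $\Sigma^{b_{i-1,n}}V$. Your closing observation that the telescopes $Y_i$ in part (i) must themselves be suspensions for the induction to close --- and that they are, being telescopes of suspension maps --- is a point the paper passes over with ``part (i) is similar,'' and is a legitimate refinement rather than a divergence in method.
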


\begin{proof}
We will prove part (ii) since part (i) is similar. This is done by induction, with the base
case being $\Omega\Sigma^{nM+1}X$ is a homotopy retract of $\Omega\Sigma X$. This base case holds true
since $\Sigma^{nM}X$ is a homotopy retract of $L_{n\ell+1}(X)$ by Proposition~(\ref{p3}),
and by Theorem~(\ref{Jie}) $\Omega\Sigma L_{n\ell+1}(X)$ is a homotopy retract of $\Omega\Sigma X$ when $n\ell+1$
is prime to $p$.

For our inductive assumption, let us assume $\Omega\Sigma^{b_{i,n}+1}X$ is a homotopy retract of 
$\Omega\Sigma X$ for some $i\geq 1$, and
let $M^{\prime}$ be the sum of the degrees of the generators of \rhlgy{*}{\Sigma^{b_{i,n}}X}. 
Notice that $\dim \Sigma^{b_{i,n}}V=\dim V=\ell$, and since $V_{odd}=0$ or $V_{even}=0$, 
either $(\Sigma^{b_{i,n}}V)_{odd}=0$ or $(\Sigma^{b_{i,n}}V)_{even}=0$. 
Since we have an isomorphism $\rhlgy{*}{\Sigma^{b_{i,n}}X}\cong\Sigma^{b_{i,n}}V$ 
of graded $\zmodp$-modules,
and since we are assuming that both $c_{n,\ell}$ and $d_{n,\ell}$ are prime to $p$,
by Proposition~(\ref{p3}) $\Sigma^{nM^{\prime}}(\Sigma^{b_{i,n}}X)$ is a homotopy retract 
of $L_{n\ell+1}(\Sigma^{b_{i,n}}X)$, 
Also, because $n\ell+1$ is prime to $p$, by Theorem~(\ref{Jie}) 
$\Omega\Sigma L_{n\ell+1}(\Sigma^{b_{i,n}}X)$ is a homotopy retract of $\Omega\Sigma (\Sigma^{b_{i,n}}X)$,
so $\Omega\Sigma^{nM^{\prime}+1}(\Sigma^{b_{i,n}}X)$ is also a homotopy retract of 
$\Omega\Sigma(\Sigma^{b_{i,n}}X)$. Then using our inductive assumption, 
$\Omega\Sigma^{nM^{\prime}+1}(\Sigma^{b_{i,n}}X)$ is a homotopy retract of $\Omega\Sigma X$. 

To check that $M^{\prime}$ has the correct value,  
let $\{\upsilon_{1},...,\upsilon_{\ell}\}$ be a basis for $V$ and $M$ be the sum of
the degrees of the generators in this basis. In this case
\[M^{\prime}=\csum{1\leq i\leq\ell}{}{(b_{i,n}+\abs{\upsilon_{i}})=\ell b_{i,n}+M}.\]
Thus $\Sigma^{nM^{\prime}}(\Sigma^{b_{i,n}}X)=\Sigma^{b_{i+1,n}}X$. This completes
the induction.

\end{proof}

\begin{proof}[Proof of Theorem~(\ref{T0})] 
In Theorem~(\ref{T2}) we found that $c_{1,\ell}=d_{1,\ell}=(\ell+1)((\ell-1)!)$ for $\ell>1$. 
Theorem~(\ref{T0}) now follows as a direct consequence of Proposition~(\ref{p4}). 
\end{proof}

\begin{proof}[Proof of Theorem~(\ref{TCN})]
Let $X$ be any suspended $p$-local $CW$-complex, and let $V$ denote \rhlgy{*}{X}, and $M$ be
the sum of the degrees of the generators in $V$.
Assume $V_{even}=0$, $\ell=\dim V$ is even, and $1<\ell<p-1$. Let $L(V)$ is the free Lie algebra
generated by $V$, and $[L(V),L(V)]$ 
the sub Lie algebra of $L(V)$ generated by Lie brackets of length greater than one.
By the Poincare\text{\'{e}}-Birkhoff-Witt theorem, there is an isomorphism of coalgebras 
\[T(V)\cong \Lambda(V)\otimes S([L(V),L(V)]).\]
This isomorphism is geometrically realized by Cohen's and Neisendorfer's 
decomposition (Theorem~(\ref{CN}))
\[\Omega\Sigma X\simeq A(X)\times \Omega Q(X),\]
with $\hlgy{*}{A(X)}\cong \Lambda(V)$ and $\hlgy{*}{\Omega Q(X)}\cong S([L(V),L(V)])$.
By Theorem~(\ref{Jie}) $\Omega\Sigma L_{\ell+1}(X)$ is a homotopy retract of $\Omega\Sigma X$, 
and the proof of this in~\cite{Wu2} indicates the section map \seqm{\Omega\Sigma L_{\ell+1}(X)}{}{\Omega\Sigma X} for 
this retraction induces the natural inclusion 
\[\rhlgy{*}{\Omega\Sigma L_{\ell+1}(X)}\simeq T(L_{\ell+1}(V))\cong\ctimes{i=1}{\infty}{S(L_{i}(L_{\ell+1}(V)))}
\subseteq \Lambda(V)\otimes S([L(V),L(V)])\]
into the right-hand factor (where $L_{j}(V)$ denotes the $\zmodp$-submodule of length $i$ Lie brackets in $L(V)$, and 
where the isomorphism follows by the Poincare\text{\'{e}}-Birkhoff-Witt theorem).
Therefore $\Omega\Sigma L_{\ell+1}(X)$ is also a homotopy retract of 
$\Omega Q(X)$. In turn, $\Omega\Sigma^{M+1}X$ is a homotopy retract of $\Omega\Sigma L_{\ell+1}(X)$ using 
Proposition~(\ref{p4}), so we obtain a decomposition
\[\Omega\Sigma X\simeq A(X)\times\Omega\Sigma^{M+1}X\times(\mbox{Some other space}).\]

Since $\ell=\dim V$ is even and $V_{even}=0$, $\rhlgy{*}{\Sigma^{M}X}\cong\Sigma^{M}V$
has only odd degree generators, so we can reapply Cohen's and Neisendorfer's decomposition
to $\Omega\Sigma^{M+1}X$. Iterating this argument, starting by taking $\Sigma^{M}X$ in place of $X$, and 
using an induction similar to the proof of Proposition~(\ref{p4}), we obtain the decomposition 
\[\Omega\Sigma X\simeq \cprod{i=0}{\infty}A(\Sigma^{b_{i,1}} X)\times(\mbox{Some other space}),\]
where $b_{i,1}$ are the integers defined in Proposition~(\ref{p4}).

\end{proof}

\section{An Application to the Moore conjecture}

The $p$-exponent $\exp_{p}(X)$ of a space $X$ is defined as the smallest power $p^t$ that annihilates the $p$-primary torsion of $\pi_{i}(X)$ for all $i>0$. 
Spheres, finite $H$-spaces, and mod-$p$ Moore spaces are all examples of spaces that have finite $p$-exponents at odd primes $p$~\cite{CMN2,Stanley,N2}. 
In the other direction, a simply connected wedge $S^{m}\vee\Sigma X$ with $\Sigma X$ rationally non-trivial does not have a finite $p$-exponent~\cite{Stanley}. 
These isolated examples aside, 
there is no known general set of criteria for distinguishing spaces that have a finite $p$-exponent from those that do not.
However, a conjecture of Moore suggests that the answer is very simple for finite simply connected $CW$-complexes $X$: 
$\exp_{p}(X)$ is finite at any prime $p$ if and only if $\pi_{*}(X)\otimes\mathbb{Q}$ is a finite dimensional vector space. 
When $X$ is a suspension it is known that $X$ is rational wedge of spheres, 
and so in this case the Moore conjecture says that $\exp_{p}(X)$ is finite if and only if $\dim(\hlgy{*}{X;\mathbb{Q}})\leq 1$,

McGibbon and Wilkerson~\cite{MW} were able to give a partial result in one direction of the Moore conjecture.
They showed that a finite simply connect $CW$-complex has a finite $p$-exponent at sufficiently large primes $p$ when it
has finite dimensional rational homotopy. This prime $p$ depends on the given space. 
In the other direction Selick~\cite{Selick5} showed that a finite simply connected $CW$-complex $X$ has no finite $p$-exponent 
whenever $X$ is a suspension, and \hlgy{*}{X;\mathbb{Z}} is torsion-free of rank greater than one. 
This result was in some sense extended by Stelzer~\cite{Stelzer1,Stelzer2} to any finite simply connected $CW$-complex $X$, 
as long as one selects a sufficiently large prime $p$ that depends on the dimension and connectivity of $X$. 
By combining Stelzer's result with that of McGibbon's and Wilkerson's, 
one sees that the Moore conjecture holds in the sense of sufficiently large primes.

There also happens to be a stable analogue of the Moore conjecture due to Stanley~\cite{Stanley},
which has the fortune of being much easier to prove. 
\begin{theorem}[Stanley]
A finite simply connected $CW$-complex $X$ has a finite $p$-exponent on stable homotopy 
$\pi^{s}_{*}(X)$ if and only if $X$ is rationally trivial.
\end{theorem}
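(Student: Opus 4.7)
The two directions of the biconditional are handled separately; the reverse implication is essentially formal, while the forward implication requires constructing unboundedly many $p$-torsion elements in $\pi^s_*(X)$.

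First suppose $X$ is rationally trivial, so $\rhlgy{*}{X;\mathbb{Q}}=0$. Since $X$ is a finite simply connected $CW$-complex, $\rhlgy{*}{X;\mathbb{Z}}$ is a finite abelian group, so the suspension spectrum $\Sigma^\infty X$ is a finite spectrum whose rationalization is contractible. The endomorphism group $[\Sigma^\infty X,\Sigma^\infty X]$ is a finitely generated abelian group by the standard induction on cells, and its rationalization equals the endomorphisms of $(\Sigma^\infty X)\otimes\mathbb{Q}\simeq\ast$, hence is zero. So $[\Sigma^\infty X,\Sigma^\infty X]$ is finite, and some integer $N$ annihilates $1_{\Sigma^\infty X}$. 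Since $\pi^s_*(X)$ is a module over $[\Sigma^\infty X,\Sigma^\infty X]$ by postcomposition, $N$ annihilates every element of $\pi^s_*(X)$; the $p$-primary part of $N$ is then a finite $p$-exponent for $\pi^s_*(X)$ at each prime $p$.

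For the converse, suppose $\rhlgy{n}{X;\mathbb{Q}}\neq 0$ for some $n\geq 2$. Since stable homotopy agrees rationally with reduced homology, there is an infinite-order element $\phi\in\pi^s_n(X)$, represented by a stable map \seqm{S^n}{\phi}{X} whose rationalization splits off $S^n_\mathbb{Q}$ as a summand of $(\Sigma^\infty X)_\mathbb{Q}$. I would pick any rational retraction and clear denominators to obtain a stable map \seqm{X}{r}{S^n} with $r\circ\phi=N\cdot 1_{S^n}$ for some $N\geq 1$. Writing $N=p^sM$ with $(M,p)=1$ and rescaling $r$ by $M^{-1}$ in the $p$-local stable category, one arranges $r\circ\phi=p^s\cdot 1_{S^n}$ $p$-locally. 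The next step is to act on $\phi$ by stable stem elements of arbitrarily large $p$-adic order: for $\alpha\in\pi^s_k$, set $\phi\cdot\alpha=(1_X\wedge\alpha)\circ(\phi\wedge 1_{S^k})\in\pi^s_{n+k}(X)$. By functoriality of the smash product, $r_*(\phi\cdot\alpha)=(r\circ\phi)\cdot\alpha=p^s\cdot\alpha$; hence if $\alpha$ has $p$-adic order $p^t$ with $t>s$, the element $\phi\cdot\alpha$ has $p$-torsion order at least $p^{t-s}$.

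To finish, I would apply this to Adams' odd-primary $\alpha$-family at $p$, whose members $\alpha_{p^k}$ have order exactly $p^{k+1}$. The resulting elements $\phi\cdot\alpha_{p^k}\in\pi^s_*(X)$ are $p$-torsion of order at least $p^{k+1-s}$, which grows without bound as $k\to\infty$, so $\pi^s_*(X)$ admits no finite $p$-exponent. The main obstacle is the passage from a rational retraction to an integral (or $p$-local) stable retraction $r$ with $r\circ\phi=p^s\cdot 1_{S^n}$, since this requires using finiteness of $[\Sigma^\infty X,S^n]$ to clear denominators; granted that step and the classical order computation for the $\alpha$-family, the multiplicative identity $r_*(\phi\cdot\alpha)=(r\circ\phi)\cdot\alpha$ reduces the rest to a routine diagram chase.
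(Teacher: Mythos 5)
The paper does not prove this statement; it is quoted from Stanley's paper with a citation, so there is no in-text argument to compare against. Your reconstruction is correct and is essentially the standard (and, as far as I recall, Stanley's own) argument: finiteness of the stable endomorphism ring of a rationally trivial finite spectrum gives the "if" direction, and for the "only if" direction the composite $r\circ\phi=p^s$ together with the unbounded $p$-orders in the image of $J$ (the $\alpha$-family elements $\alpha_{p^k}$ of order $p^{k+1}$) forces $p$-torsion of unbounded order in $\pi^s_*(X)$.
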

Now combining the following proposition with Stanley's theorem, 
we can recover Selick's work on the Moore conjecture when restricted to the spaces in Theorem~(\ref{T0}).
\begin{proposition}
\label{p5}
Take the integers $b_{i}$ and a suspended $p$-local $CW$-complex $X$ as in Theorem~(\ref{T0}), letting 
$V=\rhlgy{*}{X}$, $1<\dim V<p-1$, and either $V_{odd}=0$ or $V_{even}=0$.
Assume $X$ is $(m-1)$-connected for some $m\geq 1$. Then for each $j$ the stable homotopy group
$\pi_{j}^{s}(\Sigma X)$ is a homotopy retract of $\pi_{j+b_{i}}(\Sigma X)$ for all $i$ large enough 
such that $j\leq b_{i}+2m$.
\end{proposition}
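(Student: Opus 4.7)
The plan is to combine the loop--space retraction supplied by Theorem~(\ref{T0})(i) with the Freudenthal suspension theorem. Since $1<\dim V<p-1$ and $X$ satisfies the hypotheses of Theorem~(\ref{T0}), part~(i) of that theorem yields, for every $i\geq 1$, a homotopy retraction
\[r\colon \Omega\Sigma X\longrightarrow \Omega\Sigma^{b_i+1}X\]
with section $s\colon \Omega\Sigma^{b_i+1}X\to\Omega\Sigma X$ satisfying $r\circ s\simeq \mathrm{id}$. Applying $\pi_{j+b_i-1}$ and using the natural isomorphism $\pi_k(\Omega Y)\cong \pi_{k+1}(Y)$, one obtains a split surjection of abelian groups
\[\pi_{j+b_i}(\Sigma X)\longrightarrow \pi_{j+b_i}(\Sigma^{b_i+1}X),\]
so that $\pi_{j+b_i}(\Sigma^{b_i+1}X)$ is already a retract of $\pi_{j+b_i}(\Sigma X)$.

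The next step would be to identify $\pi_{j+b_i}(\Sigma^{b_i+1}X)$ with the stable group $\pi^{s}_{j}(\Sigma X)$ in the appropriate range. Since $X$ is $(m-1)$-connected, the space $\Sigma^{b_i+1}X$ is $(m+b_i)$-connected, and the Freudenthal suspension theorem therefore guarantees that
\[\pi_{j+b_i}(\Sigma^{b_i+1}X)\longrightarrow \pi_{j+b_i+1}(\Sigma^{b_i+2}X)\]
is an isomorphism as soon as $j+b_i\leq 2(m+b_i)$, which rearranges to $j\leq b_i+2m$ and is precisely the numerical hypothesis. The analogous inequality $j+b_i+\ell\leq 2(m+b_i+\ell)$ governing each later suspension only weakens as $\ell$ grows, so every subsequent suspension is likewise an isomorphism. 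Thus $\pi_{j+b_i}(\Sigma^{b_i+1}X)$ coincides with $\pi^{s}_{j+b_i}(\Sigma^{b_i+1}X)$, which equals $\pi^{s}_{j}(\Sigma X)$ by the standard re--indexing of stable homotopy under iterated suspension. Composing with the split surjection obtained above produces the desired retraction of $\pi^{s}_{j}(\Sigma X)$ off of $\pi_{j+b_i}(\Sigma X)$.

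I do not anticipate a genuine obstacle here: the hard unstable and representation--theoretic content has already been packaged into Theorem~(\ref{T0}), and what remains is essentially index bookkeeping via Freudenthal. The only point requiring a little care is that Freudenthal must be applied to $\Sigma^{b_i+1}X$, whose connectivity $m+b_i$ grows with $i$, rather than to $\Sigma X$ itself; the asymmetric hypothesis $j\leq b_i+2m$ (rather than $j\leq 2m$) is exactly calibrated to this shifted stable range, and it is this shift that permits the conclusion to be meaningful for arbitrarily large $j$ as $i\to\infty$.
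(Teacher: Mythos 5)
Your proposal is correct and follows essentially the same route as the paper's own proof: apply Theorem~(\ref{T0}) to get $\pi_{j+b_i}(\Sigma^{b_i+1}X)$ as a retract of $\pi_{j+b_i}(\Sigma X)$, then use Freudenthal on the $(m+b_i)$-connected space $\Sigma^{b_i+1}X$ to identify that group with $\pi^s_j(\Sigma X)$ in the range $j\leq b_i+2m$. The index bookkeeping matches the paper exactly.
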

\begin{proof}
By Theorem~(\ref{T0}) $\Omega\Sigma^{b_{i}+1}X$ is a homotopy retract of $\Omega\Sigma X$,
so $\pi_{j+b_{i}}(\Sigma^{b_{i}+1} X)$ is a homotopy retract of
$\pi_{j+b_{i}}(\Sigma X)$ for each $j$. By the Freudenthal suspension theorem
$\pi_{j+b_{i}}(\Sigma^{b_{i}+1} X)\cong\pi_{j+b_{i}}^{s}(\Sigma^{b_{i}+1} X)$ 
for $j\leq b_{i}+2m$, and $\pi_{j+b_{i}}^{s}(\Sigma^{b_{i}+1} X)\cong\pi_{j}^{s}(\Sigma X)$.
Thus $\pi_{j}^{s}(\Sigma X)$ is a homotopy retract of $\pi_{j+b_{i}}(\Sigma X)$
when $j\leq b_{i}+2m$.
\end{proof} 
Thus we see that the stable homotopy groups $\pi^{s}_{*}(\Sigma X)$ of the space $\Sigma X$ 
in Proposition~(\ref{p5}) are retracts of $\pi_{*}(\Sigma X)$. 
Since $\Sigma X$ is rationally nontrivial, $exp_{p}(\pi^{s}_{*}(\Sigma X))$ is infinite by Stanley's
theorem. So $exp_{p}(\Sigma X)$ must also be infinite.

One would hope for some sort of generalization of Theorem~(\ref{T0}), beyond the restrictions $V_{odd}=0$ or $V_{even}=0$ .
There are unfortunately many examples where this is impossible.
Let $X$ be a wedge $S^{m}\vee P^{n}(p^{r})$, where the mod-$p$ \emph{Moore space}
$P^{n}(p^{r})$ is the cofibre of the degree $p^{r}$ map \seqm{S^{n-1}}{\underline{p^{r}}}{S^{n-1}}.
Then $\Sigma X$ has torsion in its integral homology, but is rationally nontrivial, 
and the mod-$p$ homology $V=\hlgy{*}{X}$ satisfies $V_{odd}\neq 0$ and $V_{even}\neq 0$. 
If Theorem~(\ref{T0}) applied to this space $X$, 
the $p$-exponent of the stable homotopy groups of $\pi^{s}_{*}(\Sigma X)$
would be bounded above by the $p$-exponent of $\pi_{*}(\Sigma X)$,
and so using Stanley's theorem, $exp_{p}(\Sigma X)$ would be infinite. 
But by application of the Hilton-Milnor theorem to $\Omega\Sigma X$, 
and the fact that $exp_{p}(P^{j}(p^{r}))=p^{r+1}$ independently of $j$~\cite{N2},
$exp_{p}(\Sigma X)$ is in fact finite, a contradiction.

\bibliographystyle{amsplain}
\bibliography{mybibliography}
\end{document}